\newtheorem{thm}{Theorem}[section]
\newtheorem{dfn}[thm]{Definition}
\newtheorem{lmm}[thm]{Lemma}
\newtheorem{crl}[thm]{Corollary}
\newtheorem{example}[thm]{Example}
\newtheorem{rmk}[thm]{Remark}
\numberwithin{equation}{section}
\newcommand{\Hol}{\mbox{{\rm Hol}}}
\newcommand{\Z}{\Bbb Z}
\newcommand{\C}{\Bbb C}
\newcommand{\R}{\Bbb R}
\newcommand{\F}{\Bbb F}
\newcommand{\N}{\Bbb N}
\renewcommand{\P}{{\rm P}}
\newcommand{\SP}{\mbox{{\rm SP}}}
\newcommand{\Map}{\mbox{{\rm Map}}}
\newcommand{\CP}{\mathbb{C}\mathbb{P}}
\newcommand{\dis}{\displaystyle}
\newcommand{\p}{\prime}
\newcommand{\I}{\mbox{{\rm (i)}}}
\newcommand{\II}{\mbox{{\rm (ii)}}}
\newcommand{\III}{\mbox{{\rm (iii)}}}
\newcommand{\IV}{\mbox{{\rm (iv)}}}
\newcommand{\V}{\mbox{{\rm (v)}}}
\newcommand{\XS}{X_{\Sigma}}
\newcommand{\GS}{G_{\Sigma}}
\newcommand{\n}{{\bf n}}
\newcommand{\dmin}{d_{\rm min}}
\newcommand{\rmin}{r_{\rm min}}
\newcommand{\KS}{\mathcal{K}_{\Sigma}}
\newcommand{\T}{\Bbb T}
\newcommand{\SZ}{{\mathcal{X}}^{D}}
\newcommand{\po}{\mbox{{\rm Poly}}}
\title{\bf
Spaces of non-resultant systems of bounded multiplicity 
determined by 
a toric variety
}
\author{Andrzej Kozlowski\footnote{%
Institute of Applied Mathematics and Mechanics,
University of Warsaw, Banacha 2, 02-097 Warsaw, Poland
(E-mail: akoz@mimuw.edu.pl)
}
\  and \ 
Kohhei Yamaguchi\footnote{%
Department of Mathematics,
University of Electro-Communications,  Chofu, Tokyo 182-8585, Japan
(E-mail: kohhei@im.uec.ac.jp)
\newline
\quad 2010 {\it Mathematics Subject Classification.} Primary 55P15; Secondly 55R80, 55P35, 14M25.}}
\date{}
\begin{document}
\maketitle

\begin{abstract}
The space 
of non-resultant systems 
of bounded multiplicity for  a toric variety  $X$ is a generalization of  the space of rational curves on 
it.
In our earlier work \cite{KY8} we proved a homotopy stability theorem and determined explicitly the homotopy type of this space 
for the case $X =\CP^m$. 
In this paper we consider the case of
 a general  non-singular toric variety and prove a 
homotopy stability theorem generalising the one for $\CP^m$.

\end{abstract}


\section{Introduction}\label{section 1}

For a complex manifold $X$, let $\Map^*(S^2,X)=\Omega^2X$ (resp. $\Hol^*(S^2,X)$)
denote the space of all base point preserving  continuous maps
(resp. base point preserving holomorphic maps)
from the Riemann sphere $S^2$ to $X$.
The relationship between the topology of the space $\Hol^*(S^2,X)$ and that of the space
$\Omega^2X$ has played a significant role in several different areas of geometry and 
mathematical physics  (e.g. \cite{AJ}, \cite{A}).
In particular there arose the problem of whether the inclusion
$\Hol^*(S^2,X)\to \Omega^2X$ is a homotopy equivalence (or homology equivalence)
up to a certain dimension,
which we will refer to as the stability dimension.
Since G. Segal \cite{Se} studied this problem for the case $X=\CP^m$,
a number of mathematicians have investigated various closely related ones.
In particular, M. Guest \cite{Gu2} obtained a generalization of Segal's result to the case of compact non-singular toric varieties $X$.
More generally, J. Mostovoy and E. Munguia-Vilanueva \cite{MV}
generalized the result of Guest to the case of spaces of holomorphic maps from $\CP^m$ to a compact
non-singular toric variety $X$ for $m\geq 1$ and they also improved the 
homology stability dimension of Guest for the case $m=1$.
The authors \cite{KY9} also generalized the result of Mosotovy-Vilanueva for
 {\it non-compact} non-singular toric varieties $X$ for $m=1$
(see  Theorem \ref{thm: I:KY9} in detail).
\par
Similar stabilization results appeared in the work of Arnold (\cite{Ar1}, \cite{Ar2}), and Vassiliev 
(\cite{Va}, \cite{Va2}) in connection with singularity theory.  They considered spaces of polynomials without  roots of multiplicity greater than a certain  natural number. These spaces are examples of \lq\lq complement of discriminants\rq\rq\  in Vassiliev's terminology \cite{Va}. 
In fact, a part of Segal's proof in \cite{Se} was  based on an argument due to Arnold. 
The work of  B. Farb and J. Wolfson \cite{FW}  was  inspired by this argument, and 
they introduced a new family of spaces
$\po^{d,m}_n(\mathbb{F})$.
They simultaneously generalized the ones studies by Segal, Arnold and Vassiliev, and
they obtained algebro-geometric and arithmetic refinements of their topological results.
Recall the definition of the space $\po^{d,m}_n(\mathbb{F})$  as follows.
\begin{dfn}[\cite{FW}]\label{dfn: FW}
{\rm
Let $\N$ be the set of all positive integers.
For a field $\mathbb{F}$ with its algebraic closure $\overline{\mathbb{F}}$ and 
a pair  
$(m,n)\in \N^2$ with $(m,n)\not= (1,1)$,
let $\po^{d,m}_n(\mathbb{F})$ denote the space of 
all $m$-tuples $(f_1(z),\cdots ,f_m(z))\in \mathbb{F}[z]^m$ of $\mathbb{F}$-coefficients monic polynomials of the same degree $d$ such that
 polynomials $f_1(z),\cdots ,f_m(z)$ have
no common root $\alpha \in \overline{\F}$  of multiplicity $\geq n$.
\qed
}
\end{dfn}
Note that the space
$\po^{d,m}_n(\mathbb{F})$ can be identified with 
$\Hol_d^*(S^2,\CP^{m-1})$ for $(\F,n)=(\C,1)$,
where $\Hol_d^*(S^2,\CP^{m-1})$ denotes the space of base point preserving  holomorphic maps
$f:S^2\to\CP^{m-1}$ of degree $d$.
Thus,  the space $\po^{d,m}_n(\C)$ may be regarded 
as a generalizations of the space $\Hol^*(S^2,\CP^{m-1})$.
\par\vspace{1mm}\par
For a monic polynomial $f(z)\in \F [z]$ of degree $d$, let $F_n(f)(z)$
denote the $n$-tuple of monic polynomials in $\F [z]$ of the same degree $d$
defined by
\begin{equation}\label{eq: Fn}
F_n(f)(z)=(f(z),f(z)+f^{\p}(z),f(z)+f^{\p\p}(z),\cdots ,f(z)+f^{(n-1)}(z)).
\end{equation}
In an earlier paper \cite{KY8} we determined the homotopy type of the space
$\po^{d,m}_n(\F)$ explicitly for the case $\F =\C$ and obtained the following homotopy stability result.
\begin{thm}[\cite{KY8}]\label{thm: KY8}
Let $d,m,n\in \N$ be positive integers with $(m,n)\not=(1,1)$, and
let
$i^{d,m}_n:\po^{d,m}_n(\C) \to 
\Omega_d^2\CP^{mn-1}\simeq \Omega S^{2mn-1}
$
denote the natural map given by
\begin{equation}\label{eq: jet map}
i^{d,m}_n(f)(\alpha)
=
\begin{cases}
[F_n(f_1)(\alpha):F_n(f_2)(\alpha):\cdots :F_n(f_m)(\alpha)] & \mbox{ if }\alpha \in \C
\\
[1:1:\cdots :1] & \mbox{ if }\alpha=\infty
\end{cases}
\end{equation}
for $(f,\alpha )=((f_1(z),\cdots ,f_m(z)),\alpha)\in \po^{d,m}_n(\C)\times S^2$,
where
we identify $S^2=\C\cup \infty$. 
\par
Then the map $i^{d,m}_n$ is a homotopy equivalence through dimension
$D(d;m,n)$ if $(m,n)\not=(1,2)$ and it is a homology equivalence through dimension
$\lfloor \frac{d}{n}\rfloor$ if $(m,n)=(1,2)$,
where $\lfloor x\rfloor$ denotes the integer part of a real number $x$ and
the positive integer $D(d;m,n)$ is given by
$D(d;m,n)=(2mn-3)(\lfloor \frac{d}{n}\rfloor +1)-1.$
\qed
\end{thm}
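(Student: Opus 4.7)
The plan is to follow the Vassiliev--Segal strategy: realise $\po^{d,m}_n(\C)$ as the complement of a discriminant and compare its cohomology with that of the target $\Omega_d^2\CP^{mn-1}\simeq \Omega S^{2mn-1}$ through the jet-scanning map $i^{d,m}_n$. Concretely, view $\po^{d,m}_n(\C)$ as the open subset of $\C^{md}$ complementary to the closed subvariety $\Sigma^{d,m}_n$ consisting of those tuples sharing a root of multiplicity $\geq n$. A fixed tuple can have at most $\lfloor d/n\rfloor$ such common roots, since each forces a factor $(z-\alpha)^n$ in every component; this bound controls the length of the geometric filtration used below.

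The first technical step is to construct Vassiliev's non-degenerate simplicial resolution $\pi:\mathcal{X}\to \Sigma^{d,m}_n$ (a proper homotopy equivalence) and to filter $\mathcal{X}$ by the cardinality $p$ of the configuration of common multiplicity-$\geq n$ roots. The $p$-th stratum fibres over the unordered configuration space $F(\C,p)/\mathfrak{S}_p$ with fibre an open $(p-1)$-simplex times the affine space $\C^{m(d-pn)}$ of cofactor tuples, which yields a computable $E^1$-page. Combined with Alexander duality $H^k(\po^{d,m}_n(\C))\cong H^{BM}_{2md-k-1}(\Sigma^{d,m}_n)$, this describes the cohomology of $\po^{d,m}_n(\C)$ through a range dictated by the maximal filtration level $p=\lfloor d/n\rfloor$: a routine dimension count shows that each common root contributes $2mn-3$ to the cohomological depth, which is exactly the origin of the bound $D(d;m,n)=(2mn-3)(\lfloor d/n\rfloor +1)-1$. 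The same construction applied to a stabilised \lq\lq degree $\infty$\rq\rq\ model recovers the (co)homology of $\Omega S^{2mn-1}$, and $i^{d,m}_n$ is compatible with these filtrations; a comparison of $E^1$-pages then shows that it is a cohomology isomorphism through $D(d;m,n)$, which a Whitehead-style argument promotes to a homotopy equivalence whenever both sides are simply connected, i.e. whenever $(m,n)\neq (1,1),(1,2)$.

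The main obstacle I anticipate is the exceptional case $(m,n)=(1,2)$: here $\po^{d,1}_2(\C)$ is the classical configuration space $F(\C,d)/\mathfrak{S}_d$, a $K(B_d,1)$ for Artin's braid group, whereas the target $\Omega S^3$ is simply connected, so no direct Whitehead promotion is available and only a homology equivalence can be asserted. The sharper stability range $\lfloor d/n\rfloor$ in this case essentially reduces to Arnold's classical homological stability theorem for the braid groups, and confirming that it matches the Vassiliev-style bound requires a separate analysis of $H_*(B_d)$ rather than a direct comparison of simplicial resolutions --- this is the subtlest part of the argument and, I expect, the reason the theorem has to bifurcate into its two clauses.
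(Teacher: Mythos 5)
Your outline is sound and is essentially the method behind this theorem: note first that the paper only quotes Theorem \ref{thm: KY8} from \cite{KY8}, so the relevant comparison is with the machinery of \S\ref{section: simplicial resolution}--\S\ref{section: proofs}, which runs exactly your programme (discriminant, non-degenerate simplicial resolution filtered by the number of common multiplicity-$\geq n$ roots, affine-bundle strata over configuration spaces, Alexander duality, spectral sequence comparison). The one genuine difference is organizational. You propose to compare the degree-$d$ resolution directly with a stabilized ``degree $\infty$'' model of the target via the jet map, in the spirit of Vassiliev's infinite-dimensional discriminant. The paper instead (i) compares \emph{consecutive finite degrees}: using Mostovoy's truncated simplicial resolutions (truncated after level $\lfloor d/n\rfloor$, which is essential because the strata beyond that level are no longer affine bundles) it shows the stabilization map is a homology, hence after the connectivity lemma a homotopy, equivalence through the stated range; and (ii) identifies the colimit with the double loop space by Segal's scanning map, checking that $i_D$ agrees with the scanning map up to homotopy equivalences. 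Your route buys a one-step argument but requires resolving the discriminant in a function space; the paper's route stays finite-dimensional at the cost of the extra scanning step. One quantitative caveat: the truncated-resolution comparison as run in this paper yields the stability dimension $(2mn-3)\lfloor d/n\rfloor-2$, which is strictly weaker than $D(d;m,n)=(2mn-3)(\lfloor d/n\rfloor+1)-1$; the sharper bound of \cite{KY8} comes from identifying the $E^1$-page with the $\lfloor d/n\rfloor$-th stage of the stable (May--Milgram/Snaith) filtration of $\Omega^2S^{2mn-1}$ and using the connectivity of the next stable summand, so your ``routine dimension count'' needs that finer comparison to reach the stated range. Your treatment of the exceptional case is correct: $\po^{d,1}_2(\C)=C_d(\C)$ has $\pi_1=B_d$ non-abelian, so only Arnold--Segal homological stability through dimension $\lfloor d/2\rfloor$ is available and no Whitehead promotion is possible.
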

\begin{rmk}
{\rm
A map $f:X\to Y$ is called 
{\it a homotopy equivalence through dimension} $N$
(resp. 
{\it a homology equivalence through dimension} $N$)
if the induced homomorphism
$
f_*:\pi_k(X)\to \pi_k(Y)$
$(\mbox{resp. }f_*:H_k(X;\Z) \to H_k(Y;\Z))
$
is an isomorphism for any $k\leq N$.
\qed
}
\end{rmk}
Our aim of this paper is to further generalize the above result to the case where the conditions on the roots are given in terms of the combinatorial information contained in a non-singular
toric variety $\XS$,
where $\Sigma$ denotes a fan in $\R^m$ and let $\XS$ be the toric variety
associated to $\Sigma$.
\begin{dfn}\label{def: poly}
{\rm
Let $\F$ be a field with its algebraic closure $\overline{\F}$, and
let $\Sigma$ be a fan in $\R^m$ such that 
$\Sigma (1)=\{\rho_1,\cdots ,\rho_r\}$, where $\Sigma (1)$ denotes
the set of all one dimensional cones in $\Sigma$.\footnote{%
The precise definition and notation concerning fans and toric varieties
are explained in \S \ref{section: main results}.}
\par
For each $r$-tuple $D=(d_1,\cdots ,d_r)\in \N^r$,
let $\po^{D,\Sigma}_n(\mathbb{F})$ denote the space of all
$r$-tuples $(f_1(z),\cdots ,f_r(z))\in \mathbb{F}[z]^r$ of 
$\F$-coefficients monic polynomials satisfying the following
two conditions:
\begin{enumerate}
\item[(\ref{eq: jet map}.1)]
$f_i(z)\in \F [z]$ is an $\mathbb{F}$-coefficients monic polynomial of the degree $d_i$
for each $1\leq i\leq r$.
\item[(\ref{eq: jet map}.2)]
For each $\sigma =\{i_1,\cdots  ,i_s\}\in I(\KS)$,
polynomials $f_{i_1}(z),\cdots ,f_{i_s}(z)$ have no common root 
$\alpha\in \overline{\F}$ of multiplicity
$\geq n$,
where
$\KS$ denotes the underlying simplicial complex of $\XS$ on the index set
$[r]=\{1,2,\cdots ,r\}$ defined by (\ref{eq: KS}) and
$I(\KS)$ is the set
$I(\KS)=\{\sigma\subset [r]:\sigma \not\in \KS\}$
defined by (\ref{eq: IK}).
\qed
\end{enumerate}
}
\end{dfn}
\begin{rmk}
{\rm
(i)
By using the classical theory of resultants, one can show that 
$\po^{D,\Sigma}_n(\F)$ is an affine variety over $\F$ which is the complement of the set of solutions of a system of 
 polynomial equations (called a generalised resultant) with integer coefficients. This is why we call it
{\it the space of non-resultant systems of bounded multiplicity of type}  $(\Sigma,n)$.
\par
(ii)
When $\XS$ is a simply connected non-singular toric variety
(over $\C$)
satisfying the condition
(\ref{equ: homogenous}.1),
one can show that
$\po^{D,\Sigma}_n(\C)=\Hol_D^*(S^2,\XS)$
if $n=1$
(see Definition \ref{dfn: holomorphic} for the details).
Moreover,  note that $\po^{d,m}_n(\mathbb{F})=\po^{D,\Sigma}_n(\mathbb{F})$ 
when $\XS =\CP^{m-1}$ and $D=(d,d,\cdots ,d)$.
So we may regard the space $\po^{D,\Sigma}_n(\F)$ as a
generalization of the  spaces $\po^{d,m}_n(\F)$ and
$\Hol_D^*(S^2,\XS)$.
\qed
}
\end{rmk}
The  principal motivation of this paper
is to generalize the above result (Theorem \ref{thm: KY8}) for the space
$\po^{D,\Sigma}_n(\C)$.
From now on, we write
\begin{equation}
\po^{D,\Sigma}_n=\po^{D,\Sigma}_n(\C)
\quad
\mbox{ for }\F =\C .
\end{equation}
Then the main result of this paper is the following
 
\begin{thm}[Theorem \ref{thm: I}]
\label{thm: main result} 
Let $D=(d_1,\cdots ,d_r)\in \N^r$,
$n\geq 2$,
and let $\XS$ be an $m$ dimensional simply connected non-singular toric variety 
such that 
the condition $($\ref{equ: homogenous}.1$)$ holds.
\par
$\I$
If $\sum_{k=1}^rd_k\textbf{\textit{n}}_k= {\bf 0}_m$, then
the natural map (given by (\ref{eq; def iD}))
$$
i_D:\po^{D,\Sigma}_n \to 
\Omega^2_D\XS (n)\simeq
\Omega^2_0\XS (n)\simeq 
\Omega^2\mathcal{Z}_{\KS}(D^{2n},S^{2n-1})
$$ 
is a homotopy equivalence
through dimension $d(D;\Sigma ,n)$, where
$d(D;\Sigma,n)$ denotes the positive integer defined in
(\ref{eq: dDSigma}), and
the spaces
$\XS (n)$ and $\mathcal{Z}_K(X,A)$
are 
the orbit space and the polyhedral product of a pair
$(X,A)$ given by (\ref{eq: XSigma(n)}) and
Definition \ref{dfn: polyhedral product}, respectively.
\par
$\II$
If
$\sum_{k=1}^rd_k\textbf{\textit{n}}_k\not= {\bf 0}_m$,
there is a map
$$
j_D:\po^{D,\Sigma}_n \to 
\Omega^2\mathcal{Z}_{\KS}(D^{2n},S^{2n-1})
$$
which
is a homotopy equivalence
through dimension $d(D;\Sigma ,n)$.
\end{thm}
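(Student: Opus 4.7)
The plan is to adapt the Vassiliev simplicial resolution strategy employed by the authors in \cite{KY8} to the toric setting. First I would view $\po^{D,\Sigma}_n$ as the complement of a discriminant variety $\Sigma_D$ inside the affine space $\A(D)=\prod_{k=1}^r\{f_k(z)\in \C[z]\text{ monic of degree }d_k\}\cong \C^{|D|}$, where $|D|=d_1+\cdots+d_r$. The discriminant $\Sigma_D$ consists of those tuples $(f_1,\ldots,f_r)$ for which there exist some $\sigma=\{i_1,\ldots,i_s\}\in I(\KS)$ and some $\alpha\in \C$ such that each $f_{i_j}$ vanishes at $\alpha$ to order at least $n$. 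By Alexander duality, recovering the (co)homology of $\po^{D,\Sigma}_n$ reduces to computing the Borel--Moore homology of $\Sigma_D$.

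Next I would construct the non-degenerate simplicial (Vassiliev) resolution $\widetilde{\Sigma}_D\to \Sigma_D$, parametrised by finite collections of points $\alpha_1,\ldots,\alpha_\ell\in \C$ each equipped with a witnessing subset $\sigma_j\in I(\KS)$. This resolution carries a natural filtration $F_1\subset F_2\subset \cdots$ by the number of defect points, and the filtration quotients $F_\ell/F_{\ell-1}$ are Thom spaces of bundles whose fibres encode the combinatorics of $I(\KS)$ (essentially certain joins of simplices dual to $\KS$). This yields a Vassiliev spectral sequence converging to $H^*(\po^{D,\Sigma}_n)$ whose $E^1$-term is expressible purely in terms of $\KS$ and $n$.

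On the target side I would exploit the configuration-space model for $\Omega^2\mathcal{Z}_{\KS}(D^{2n},S^{2n-1})$, roughly configurations of points in $\R^2$ labelled by polyhedral product data associated to $\KS$ and the pair $(D^{2n},S^{2n-1})$. This configuration space carries an analogous filtration by particle number, yielding a spectral sequence whose $E^1$-term I would compare stratum by stratum with the Vassiliev sequence from the source. The jet-evaluation map $i_D$, which sends $(f_1,\ldots,f_r)$ essentially to the configuration of its common-root loci decorated by the jet data $F_n(f_k)$, respects these filtrations by construction and therefore induces a morphism of the two spectral sequences.

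The principal obstacle will be showing that this morphism is an isomorphism on $E^\infty$ in total degree $\leq d(D;\Sigma,n)$. This requires controlling the connectivity of the polyhedral complexes indexed by $I(\KS)$ that arise as fibres in both filtrations, and the precise stability bound in (\ref{eq: dDSigma}) is dictated by these connectivity estimates together with the constraint that the Vassiliev filtration only agrees with its truncation up to $\lfloor \dmin/n\rfloor$ levels. A secondary but delicate point concerns case $\II$: when $\sum_{k}d_k\textbf{\textit{n}}_k\neq {\bf 0}_m$, the image of the naive jet map $i_D$ does not land in the component of $\Omega^2\XS(n)$ of the prescribed degree $D$, so the corrective map $j_D$ to $\Omega^2\mathcal{Z}_{\KS}(D^{2n},S^{2n-1})$ must be assembled by composing with a fixed anchoring map that shifts path components without disturbing the filtration comparison; the $\mathcal{Z}_{\KS}$-target is well-adapted to this, since its homotopy type does not depend on the degree datum $D$, and the stability conclusion can then be inherited from the argument of case $\I$.
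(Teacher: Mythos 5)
Your overall framework (discriminant, Alexander duality, non-degenerate simplicial resolution, Vassiliev spectral sequence) matches the paper's starting point, but the core of your argument diverges from the paper's and contains a genuine gap. You propose to compare the Vassiliev spectral sequence of the source \emph{directly} with a particle-number spectral sequence coming from ``the configuration-space model for $\Omega^2\mathcal{Z}_{\KS}(D^{2n},S^{2n-1})$''. No such filtered model is available here: the May--Milgram/Segal labelled configuration space $C(\R^2;X)\simeq \Omega^2\Sigma^2X$ requires the target to be a double suspension, which a polyhedral product $\mathcal{Z}_{\KS}(D^{2n},S^{2n-1})$ is not in general, and there is no known stable splitting of its double loop space whose strata could be matched against the groups $H_c^{*}(C_{k;\Sigma};\pm\Z)$ appearing on the $E^1$-page of the resolution. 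Producing such a model and verifying the isomorphism on $E^1$ (and the compatibility of differentials) is essentially the whole difficulty of the theorem, and your proposal leaves it as an unexamined black box. The paper deliberately avoids this comparison: it uses the Vassiliev/truncated resolutions only to compare $\po^{D,\Sigma}_n$ with $\po^{D+\textbf{\textit{e}}_i,\Sigma}_n$ (Theorem \ref{thm: III}, Corollary \ref{crl: III}), where both $E^1$-terms are literally the \emph{same} groups $H^{2nrk-s}_c(C_{k;\Sigma};\pm\Z)$ and the comparison is immediate; the identification of the target is then done separately on the \emph{colimit}, via the scanning map, the group-completion type equivalence of Theorem \ref{thm: scanning map}, the homotopy equivalence $E^{\Sigma}_n(\overline{U},\partial\overline{U})\simeq DJ(\KS(n))$ of Lemma \ref{lmm: E-infty}, and the fibration/principal-bundle comparisons of \S\ref{section: polyhedral products} relating $DJ(\KS(n))$, $\mathcal{Z}_{\KS}(\C^n,(\C^n)^*)$ and $\XS(n)$. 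Your plan has no substitute for this scanning step.

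Two further points. First, your argument as stated would only yield a homology equivalence (Alexander duality and spectral sequences see homology); to get a homotopy equivalence through the stated range you need the simple connectivity of $\po^{D,\Sigma}_n$ for $n\geq 2$ (Lemma \ref{lmm: 1-connected}), which you do not address. Second, in case $\II$ your ``anchoring map'' is too vague: the obstruction is that $i_D$ itself is only defined when $\sum_k d_k\textbf{\textit{n}}_k={\bf 0}_m$ (the tuple must extend over $\infty$), and the correct fix is to compose the stabilization map $s_{D,D+\textbf{\textit{a}}}$ with $i_{D+\textbf{\textit{a}}}$ for $D+\textbf{\textit{a}}=n_0D_*$ a multiple of the tuple guaranteed by condition (\ref{equ: homogenous}.1); the stability range is then inherited because $d(D;\Sigma,n)\leq d(D+\textbf{\textit{a}};\Sigma,n)$. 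You should make this explicit rather than appeal to an unspecified component-shifting map.
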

This paper is organized as follows.
In \S \ref{section: main results} we recall several basic  definitions 
and facts about toric varieties and 
holomorphic curves on toric varieties, which will be in the statements of the results of this paper.
Precise statements of the main results  are
stated after these basic definitions and facts.
In \S \ref{section: polyhedral products} we recall several basic facts related polyhedral
products and toric varieties.
In \S \ref{section: simplicial resolution}, 
we summarize  the definitions of the non-degenerate simplicial resolution
and the associated truncated simplicial resolution. 
Then
we construct the Vassiliev spectral sequence converging to  $H_*(\po^{D,\Sigma}_n;\Z)$ by using them,
and prove the homotopy stability result 
(Theorem \ref{thm: III}, Corollary \ref{crl: III}).
In \S \ref{section: scanning maps} we consider the configuration model 
for $\po^{D,\Sigma}_n$ and
recall the stabilized scanning map.
Furthermore, we investigate
the space
$E^{\Sigma}_n(\overline{U},\partial \overline{U})$
and show that it is homotopy equivalent to the Davis-Januszkiewicz space 
$DJ(\KS (n))$.
In \S \ref{section: stability} we give the proof of stability result
(Theorem \ref{thm: IV}) by using the stabilized scanning map,
and
finally in \S \ref{section: proofs} we give the proofs of the main results
(Theorem \ref{thm: I}, 
Corollary \ref{crl: II}).

\section{Toric varieties and the main results}
\label{section: main results}
In this section we recall several basic definitions and facts related to toric varieties
(convex rational polyhedral cones, toric varieties, a fan of toric variety,
polyhedral products,homogenous coordinate, rational curves on a toric variety etc)
and give precise statements of the main results of this paper.
\paragraph{Fans and toric varieties}
{\it A convexex rational polyhedral cone}
in $\R^m$
is a subset of $\R^m$ of the form
\begin{equation}\label{eq: cone}
\sigma = \mbox{Cone}(S)=
\mbox{Cone}(\textit{\textbf{m}}_1,\cdots,\textit{\textbf{m}}_s)
=
\{\sum_{k=1}^s\lambda_k\textit{\textbf{m}}_k:\lambda_k\geq 0
\}
\end{equation}
for a finite set $S=\{\textit{\textbf{m}}_1, \cdots ,
\textit{\textbf{m}}_s\} \subset\Z^m$.
The dimension of $\sigma$ is the dimension of
the smallest subspace of $\R^m$ which contains $\sigma$.
A convex rational polyhedral cone $\sigma$
is called
{\it  strongly convex} if
$\sigma \cap (-\sigma)=\{{\bf 0}_m\}$,
where
we set ${\bf 0}_m={\bf 0}=(0,0,\cdots, 0)\in \R^m.$
{\it A face} $\tau$ of 
a convex rational polyhedral cone
$\sigma$ is a subset $\tau\subset \sigma$ of the form
$\tau =\sigma\cap \{\textit{\textbf{x}}\in\R^m:L(\textbf{\textit{x}})=0\}$ 
for some linear form $L$ on $\R^m$, such that
$\sigma \subset \{\textit{\textbf{x}}\in\R^m:
L(\textit{\textbf{x}})\geq 0\}.$
If we set 
$\{k:1\leq k\leq s, L(\textit{\textbf{m}}_k)=0\}=\{i_1,\cdots ,i_t\},$
we easily see that $\tau =\mbox{Cone}(\textit{\textbf{m}}_{i_1},
\cdots , \textit{\textbf{m}}_{i_t})$.
Hence, if $\sigma$ is a strongly convex rational polyhedral cone,
so is any of its faces.\footnote{%
When $S$ is the emptyset $\emptyset$,
we set $\mbox{Cone}(\emptyset)=\{{\bf 0}_m\}$
and we may also regard it as one of strongly convex rational polyhedral cones
in $\R^m$.
}
\par
Let $\Sigma$ be a finite collection of strongly convex rational polyhedral cones
in $\R^m$.
Then it 
is called {\it a fan} (in $\R^m$) if
the following two conditions (\ref{eq: cone}.1) and (\ref{eq: cone}.2) 
are satisfied:
\begin{enumerate}
\item[(\ref{eq: cone}.1)]
Every face $\tau$ of $\sigma\in \Sigma$ belongs to $\Sigma$.
\item[(\ref{eq: cone}.2)]
If $\sigma_1,\sigma_2\in \Sigma$, $\sigma_1\cap \sigma_2$ is
a common face of each $\sigma_k$ and
$\sigma_1\cap\sigma_2\in \Sigma$.
\end{enumerate}
\par
An $m$ dimensional irreducible normal  variety
$X$ (over $\C$) is called {\it a toric variety}
if it has a Zariski open subset
 $\T^m_{\C}=(\C^*)^m$ and the action of $\T^m_{\C}$ on itself
extends to an action of $\T^m_{\C}$ on $X$.
The most significant property of a toric variety
is that it is characterized up to isomorphism entirely by its 
associated fan 
$\Sigma$. 
We denote by $\XS$ the toric variety associated to a fan $\Sigma$
(see \cite{CLS} for the details).
\par
It is well known that
there are no holomorphic maps
$\CP^1=S^2\to \T^m_{\C}$ except the constant maps, and that
the fan $\Sigma$ of $\T^m_{\C}$ is $\Sigma =\{{\bf 0}_m\}$.
Hence, without loss of generality
we always assume that $\XS\not=\T^m_{\C}$ and that
any fan $\Sigma$ in $\R^m$
satisfies the condition
$\{{\bf 0}_m\}\subsetneqq \Sigma .$
\paragraph{Polyhedral products}
Now recall the basic definitions concerning polyhedral products and related spaces.
\begin{dfn}\label{dfn: polyhedral product}
{\rm
Let $K$ be a simplicial complex on the index set $[r]=\{1,2,\cdots ,r\}$,%
\footnote{%
Let $K$ be some set of subsets of $[r]$.
Then the set $K$ is called {\it an abstract simplicial complex} on the index set $[r]$ if
the following condition holds:
 if $\tau \subset \sigma$ and $\sigma\in K$, then $\tau\in K$.
In this paper by a simplicial complex $K$ we always mean an  \textit{an abstract simplicial complex}, 
and we always assume that a simplicial complex $K$  contains the empty set 
$\emptyset$.
}
and let $(X,A)$
be a  pairs of based spaces.
\par\vspace{1mm}\par
(i) Let $I(K)$ denote the some collection of subsets $\sigma\subset [r]$ defined by
\begin{equation}\label{eq: IK}
I(K)=\{\sigma \subset [r]:\sigma\notin K\}.
\end{equation}
\par
(ii)
Define
 {\it the polyhedral product} $\mathcal{Z}_K
(X,A)$ 
with respect to $K$
by
\begin{align}\label{eq: polyhedral p}
\mathcal{Z}_K(X,A)
&=
\bigcup_{\sigma\in K}(X,A)^{\sigma},
\qquad
\mbox{where}
\\
(X,A)^{\sigma}
&=
\{(x_1,\cdots ,x_r)\in X^r:
x_k\in A\mbox{ if }k\notin \sigma\}.
\nonumber
\end{align}
\par
(iii)
For each subset $\sigma =\{i_1,\cdots ,i_s\}\subset [r]$, let
$L_{\sigma}(\C^n)$ denote the subspace of $\C^{nr}$ defined by
\begin{align}
L_{\sigma}(\C^n)
&=
\{(\textbf{\textit{x}}_1,\cdots ,\textbf{\textit{x}}_r)\in\C^{nr}:
\textbf{\textit{x}}_i\in \C^n,\ 
\textbf{\textit{x}}_{i_1}=\cdots =\textbf{\textit{x}}_{i_s}=
{\bf 0}_n\}
\end{align}
and let $L_n(\Sigma)$ denote the subspace of $\C^{nr}$ defined by
\begin{equation}\label{eq: L(Sigma)}
L_n(\Sigma )=\bigcup_{\sigma\in I(K)}L_{\sigma}(\C^n)
=\bigcup_{\sigma\subset [r],\sigma\notin K}L_{\sigma}(\C^n).
\end{equation}
Then it is easy to see that}
\begin{equation}\label{eq: LSigma}
\mathcal{Z}_K(\C^n,(\C^{n})^*)
=
\C^{nr}\setminus
L_n(\Sigma),
\ \ 
\mbox{\rm where }(\C^{n})^*=\C^n\setminus \{{\bf 0}_n\}.
\quad
\qed
\end{equation}
\end{dfn}
\paragraph{Homogenous coordinates of toric varieties}
Next we recall the basic facts about homogenous coordinates on
toric varieties.
\par\vspace{1mm}\par
\begin{dfn}\label{dfn: fan}
{\rm
Let $\Sigma$ be a fan in $\R^m$
such that $\{{\bf 0}_m\}\subsetneqq \Sigma$, and let
\begin{equation}\label{eq: one dim cone}
\Sigma (1)=\{\rho_1,\cdots ,\rho_r\}
\end{equation}
denote the set of all
one dimensional cones in $\Sigma$. 
\par
(i)
For each $1\leq k\leq r$,
we denote by $\textbf{\textit{n}}_k\in\Z^m$ 
\textit{the primitive generator} of
$\rho_k$, such that 
$\rho_k \cap \Z^m=\Z_{\geq 0}\cdot \textbf{\textit{n}}_k.$
%
Note that
$\rho_k 
=\mbox{Cone}(\textit{\textbf{n}}_k).$
\par
(ii)
Let $\mathcal{K}_{\Sigma}$ denote {\it the underlying simplicial complex of}  $\Sigma$ 
defined by
\begin{equation}\label{eq: KS}
\KS =
\Big\{\{i_1,\cdots ,i_s\}\subset [r]:
\textbf{\textit{n}}_{i_1},\textbf{\textit{n}}_{i_2},\cdots
,\textbf{\textit{n}}_{i_s}
\mbox{ span a cone in }\Sigma\Big\}.
\end{equation}
It is easy to see that $\KS$ is a simplicial complex on the index set $[r]$.
\par
(iii)
Define  the subgroup
$\GS\subset \T^r_{\C}=(\C^*)^r$  by
\begin{equation}
\GS =\{(\mu_1,\cdots ,\mu_r)\in \T^r_{\C}:
\prod_{k=1}^r(\mu_k)^{\langle \textbf{\textit{n}}_k,\textbf{\textit{m}}
\rangle}=1
\mbox{ for all }\textbf{\textit{m}}\in\Z^m\},
\end{equation}
where
$\langle \textbf{\textit{u}}, \textbf{\textit{v}}\rangle=
\sum_{k=1}^m u_kv_k$ for
$\textbf{\textit{u}}=(u_1,\cdots ,u_m)$
and  $\textbf{\textit{v}}=(v_1,\cdots ,v_m)\in\R^m$.
\par
(iv)
Now 
consider the natural $\GS$-action on
$\mathcal{Z}_{\mathcal{K}_{\Sigma}}(\C^n,(\C^{n})^*)$ given by 
coordinate-wise multiplication, i.e.
\begin{equation}
(\mu_1,\cdots ,\mu_r)
\cdot(\textbf{\textit{x}}_1,\cdots ,\textbf{\textit{x}}_r)=
(\mu_1\textbf{\textit{x}}_1,\cdots ,\mu_r\textbf{\textit{x}}_r)
\end{equation}
for
$((\mu_1,\cdots ,\mu_r),
(\textbf{\textit{x}}_1,\cdots ,\textbf{\textit{x}}_r))
\in \GS \times \mathcal{Z}_{\mathcal{K}_{\Sigma}}(\C^n,(\C^{n})^*),$
where we set
\begin{equation}
\mu \textbf{\textit{x}}=
(\mu x_1,\cdots ,\mu x_r).
\quad
\mbox{if }(\mu ,
\textbf{\textit{x}})=(\mu, (x_1,\cdots ,x_r))\in \C\times \C^n.
\end{equation}
Then define the space $\XS (n)$ by
the corresponding orbit space
\begin{equation}\label{eq: XSigma(n)}
\XS (n)=
\mathcal{Z}_{\mathcal{K}_{\Sigma}}(\C^n,(\C^{n} )^*)/\GS.
\qquad\qquad\qquad
\qed
\end{equation}
}
\end{dfn}
\begin{rmk}\label{rmk: fan}
{\rm
(i)
Let $\Sigma$ be a fan in $\R^m$ as in (\ref{eq: one dim cone}).
Then the fan $\Sigma$ is completely determined by
the pair
$(\mathcal{K}_{\Sigma},\{\textit{\textbf{n}}_k\}_{k=1}^r)$ 
(see \cite[Remark 2.3]{KY9} in detail).
\par
(ii) 
Note that the group $\GS$ acts on 
$\mathcal{Z}_{\KS}(\C^n,(\C^n)^*)$ freely
(see Corollary \ref{crl: principal}).
Moreover, one can show that
$\XS (n)$ is a toric variety
(see Remark \ref{rmk: toric-variety-remark}).
\qed
}
\end{rmk}
 The following theorem 
 plays a crucial role in the proof of the main result of this paper.
\begin{thm}
[\cite{Cox1}, Theorem 2.1; \cite{Cox2}, Theorem 3.1]\label{prp: Cox}
Let $\Sigma$ be a fan in $\R^m$ as in Definition
\ref{dfn: fan} and 
suppose that the set $\{\textit{\textbf{n}}_k\}_{k=1}^r$ 
of all primitive generators  spans $\R^m$
$($i.e. $\sum_{k=1}^r\R\cdot \textit{\textbf{n}}_k =
\{\sum_{k=1}^r\lambda_k \textit{\textbf{n}}_k:\lambda_k\in \R\}=
\R^m).$
\par
$\I$
Then
there is a natural isomorphism
\begin{equation}\label{equ: homogenous1}
\XS\cong
\mathcal{Z}_{\mathcal{K}_{\Sigma}}(\C,\C^*)/\GS =\XS (1). 
\end{equation}
\par
$\II$
If
$f:\CP^s\to \XS$ is a holomorphic map, then
there exists an $r$-tuple $D=(d_1,\cdots ,d_r)\in (\Z_{\geq 0})^r$
of non-negative integers
satisfying the condition
$\sum_{k=1}^rd_k\textit{\textbf{n}}_k={\bf 0}_m$,
and homogenous polynomials $f_i\in \C [z_0,\cdots ,z_s]$ of degree $d_i$
$(i=1,2,\cdots, r)$
such that the
polynomials $\{f_{i}\}_{i\in\sigma}$ have no common root except
${\bf 0}_{s+1}\in\C^{s+1}$ for each $\sigma \in I(\mathcal{K}_{\Sigma})$
and that
the diagram
\begin{equation}\label{eq: homogenous-CD}
\begin{CD}
\C^{s+1}\setminus \{{\bf 0}_{s+1}\} @>(f_1,\cdots ,f_r)>> \mathcal{Z}_{\KS}(\C,\C^*)
\\
@V{\gamma_s}VV @V{q_{\Sigma}}VV
\\
\CP^s @>f>> \mathcal{Z}_{\KS}(\C,\C^*)/\GS =\XS 
\end{CD}
\end{equation}
is commutative,
where we identify $\XS =\XS (1)$ as in (\ref{equ: homogenous1}) and the
two map
$\gamma_s:\C^{s+1}\setminus \{{\bf 0}_{s+1}\}\to \CP^s$
and $q_{\Sigma}:\mathcal{Z}_{\KS}(\C,\C^*)\to \XS =\XS (1)$
denote
the canonical Hopf fibering and the canonical projection
induced from the identification $($\ref{equ: homogenous1}$)$, respectively.
In this case, we call this holomorphic map $f$  a holomorphic map of degree $D=(d_1,\cdots ,d_r)$ and we
 represent it  as
\begin{equation}\label{equ: homogenous}
f=[f_1,\cdots ,f_r].
\end{equation}
\par
$\III$
If $g_i\in \C [z_0,\cdots ,z_s]$ is a homogenous polynomial
of degree $d_i$
$(1\leq i\leq r)$ such that $f=[f_1,\cdots ,f_r]=[g_1,\cdots ,g_r]$,
there  exists some element $(\mu_1,\cdots ,\mu_r)\in \GS$ such that
$f_i=\mu_i\cdot g_i$ for each $1\leq i\leq r$.
Thus, the $r$-tuple
$(f_1,\cdots ,f_r)$ of homogenous polynomials representing a holomorphic map $f$
is  determined uniquely up to
$\GS$-action.
\qed
\end{thm}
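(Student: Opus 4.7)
The plan is to establish Cox's description in three stages: build the quotient $\XS \cong \mathcal{Z}_{\KS}(\C,\C^*)/\GS$ from affine toric charts, then extract polynomial representations of holomorphic maps via principal-bundle pullback, and finally deduce uniqueness from the absence of non-constant global holomorphic $\C^*$-valued functions on $\CP^s$.

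For Part (I), I would cover $\mathcal{Z}_{\KS}(\C,\C^*)$ by $\GS$-stable affine opens indexed by cones. For each $\sigma = \mathrm{Cone}(\textbf{\textit{n}}_{i_1},\ldots,\textbf{\textit{n}}_{i_s}) \in \Sigma$, with $\widehat{\sigma} = \{i_1,\ldots,i_s\} \in \KS$, set
\[
V_\sigma = \{(x_1,\ldots,x_r) \in \mathcal{Z}_{\KS}(\C,\C^*) : x_k \neq 0 \text{ for all } k \notin \widehat{\sigma}\}.
\]
Using the spanning hypothesis on $\{\textbf{\textit{n}}_k\}_{k=1}^r$, the short exact sequence $1 \to \GS \to \T^r_\C \to \T^m_\C \to 1$, with middle arrow $(\mu_k) \mapsto \bigl(\prod_k \mu_k^{\langle \textbf{\textit{n}}_k, \textbf{\textit{e}}_i\rangle}\bigr)_i$, identifies the monoid of $\GS$-invariant monomials regular on $V_\sigma$ with $\sigma^\vee \cap M$. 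This gives $V_\sigma/\GS \cong U_\sigma$, the standard affine toric chart; the isomorphisms glue compatibly along $V_{\sigma_1} \cap V_{\sigma_2} = V_{\sigma_1 \cap \sigma_2}$ to yield the global isomorphism. A separate argument exploiting the pattern of allowed non-vanishing coordinates shows $\GS$ acts freely, so $q_\Sigma$ is a principal $\GS$-bundle.

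For Part (II), pull back the principal $\GS$-bundle $q_\Sigma$ along $f:\CP^s \to \XS$ to obtain $P \to \CP^s$, and extend its structure group to get the $\T^r_\C$-bundle $\widehat P = P \times^{\GS} \T^r_\C$. Since $H^1(\CP^s, \mathcal{O}^*) = \mathrm{Pic}(\CP^s) = \Z$, the bundle $\widehat P$ is the frame bundle of $\mathcal{O}(d_1) \oplus \cdots \oplus \mathcal{O}(d_r)$ for a unique $D = (d_1,\ldots,d_r) \in \Z^r$. Applying $H^1(\CP^s,-)$ to the sequence of tori, the connecting map $\Z^r \to \Z^m$ sends $(d_k) \mapsto \sum_k d_k \textbf{\textit{n}}_k$; since $\widehat P$ comes from a $\GS$-bundle, its pushforward to $H^1(\CP^s,\T^m_\C)$ is trivial, forcing $\sum_k d_k \textbf{\textit{n}}_k = \mathbf{0}_m$. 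Choosing the base-point-preserving lift $\tilde f:\C^{s+1}\setminus\{\mathbf{0}_{s+1}\} \to \mathcal{Z}_{\KS}(\C,\C^*)$ over the Hopf fibering $\gamma_s$, each component $f_i$ is holomorphic and homogeneous of degree $d_i$ under the tautological $\C^*$-action; for $s \geq 1$, Hartogs's theorem extends $f_i$ to a homogeneous polynomial on $\C^{s+1}$, and the case $s=0$ is trivial. Because $\tilde f$ lands in $\mathcal{Z}_{\KS}(\C,\C^*) = \C^r \setminus L_1(\Sigma)$, for any $\sigma = \{i_1,\ldots,i_t\} \in I(\KS)$ the polynomials $f_{i_1},\ldots,f_{i_t}$ cannot simultaneously vanish at any $z \neq \mathbf{0}_{s+1}$, establishing the non-resultant condition, and the factorization through the Hopf fibering gives $f = [f_1,\ldots,f_r]$.

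Part (III) is a consequence of the triviality of $\mathrm{Hol}(\CP^s,\C^*)$. If $[f_1,\ldots,f_r]=[g_1,\ldots,g_r]$, the two lifts $\tilde f, \tilde g$ of $f$ to $\mathcal{Z}_{\KS}(\C,\C^*)$ differ fiberwise by a holomorphic $\mu:\C^{s+1}\setminus\{\mathbf{0}_{s+1}\} \to \GS \subset \T^r_\C$. Each ratio $\mu_k = g_k/f_k$ is, after Hartogs extension, nowhere-vanishing and homogeneous of degree $0$ under the Hopf scaling (since $f_k,g_k$ share the degree $d_k$), hence descends to a global holomorphic function $\CP^s \to \C^*$, which is necessarily a constant $\mu_k \in \C^*$. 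Thus $\mu = (\mu_1,\ldots,\mu_r) \in \GS$ is constant, giving $f_i = \mu_i \cdot g_i$ uniformly. The main obstacle is Part (II): correctly translating the principal $\GS$-bundle data on $\CP^s$ into the degree vector $D$ and reading off the linear constraint $\sum d_k \textbf{\textit{n}}_k = \mathbf{0}_m$ from the exact sequence of algebraic tori. The remaining steps reduce to standard facts about toric affine charts, line bundles on $\CP^s$, and Hartogs extension.
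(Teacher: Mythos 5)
The paper does not prove this statement at all: it is quoted verbatim from Cox ([Cox1, Theorem 2.1] and [Cox2, Theorem 3.1]) and used as a black box, so there is no internal proof to compare against. Your reconstruction follows the standard line of Cox's own arguments and is essentially correct: part (I) via the $\GS$-stable charts $V_\sigma$ and the identification of invariant monomials with $\sigma^\vee\cap M$ is exactly the proof of the quotient presentation, and parts (II)--(III) via the exact sequence $1\to\GS\to\T^r_{\C}\to\T^m_{\C}\to1$, $\mathrm{Pic}(\CP^s)=\Z$, Hartogs extension and the constancy of holomorphic maps $\CP^s\to\C^*$ reproduce the content of the ``functor of a toric variety'' paper. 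Three points deserve an explicit word if this were written out in full. First, the passage from $V_\sigma/\GS\cong U_\sigma$ as a good quotient to the set-theoretic orbit space, and the freeness of the $\GS$-action, both use that $\Sigma$ is smooth (or at least simplicial); the theorem as stated only hypothesizes that the rays span $\R^m$, so you are implicitly invoking the paper's standing non-singularity assumption. Second, the existence of the holomorphic lift $\tilde f$ over $\gamma_s$ is the crux of (II) and should be justified, e.g.\ by observing that every holomorphic $\GS$-bundle on $\C^{s+1}\setminus\{{\bf 0}_{s+1}\}$ is trivial for $s\geq1$ because $H^1(\C^{s+1}\setminus\{{\bf 0}_{s+1}\},\mathcal{O})=0$, $H^2(\C^{s+1}\setminus\{{\bf 0}_{s+1}\};\Z)=0$ and the space is simply connected. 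Third, you never explain why $d_i\geq 0$; this follows from Hartogs plus homogeneity, since a nonzero homogeneous function of negative degree cannot extend across the origin. None of these is a structural flaw, and your argument for (III) (the ratios $g_k/f_k$ descend to holomorphic functions $\CP^s\to\C^*$, hence are constants forming an element of $\GS$) is exactly the intended one.
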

\paragraph{Assumptions}
From now on, let $\Sigma$ be a fan in $\R^m$
satisfying the condition (\ref{eq: one dim cone})
as in Definition \ref{dfn: fan}, 
and assume that $\XS$ is simply connected and non-singular.
Moreover,
we shall assume  the following condition holds.
\begin{enumerate}
\item[$($\ref{equ: homogenous}.1$)$]
There is an $r$-tuple
$D_*=(d_1^*,\cdots ,d_r^*)\in \N^r$ such that
$\sum_{k=1}^rd_k^*\textit{\textbf{n}}_k={\bf 0}_m.$
\end{enumerate}
\begin{rmk}\label{rmk: assumption}
{\rm
It follows from \cite[Theorem 12.1.10]{CLS} that
$\XS$ is simply connected if and only if the fan
$\Sigma$ satisfies the following condition $(*)$:
\begin{enumerate}
\item[($*$)]
The set $\{\textbf{\textit{n}}_k\}_{k=1}^r$ of all primitive generators spans
$\Z^m$ over $\Z$, i.e.
\newline
$\sum_{k=1}^r\Z\cdot \textbf{\textit{n}}_k=\Z^m$.
\end{enumerate}
Thus, one can easily see that
the set $\{\textit{\textbf{n}}_k\}_{k=1}^r$ 
of all primitive generators  spans $\R^m$
if $\XS$ is simply connected.
In particular, we can see that $\XS$ is simply connected
if $\XS$ is a compact smooth toric variety
(see Lemma \ref{lmm: toric}).
\qed
}
\end{rmk}
\paragraph{Spaces of holomorphic maps}
We let $\XS$ be a smooth toric variety and
 make the identification $\XS =\mathcal{Z}_{\KS}(\C,\C^*)/\GS =\XS (1)$. 
Now
consider a base point preserving holomorphic map $f=[f_1,\cdots ,f_r]:
\CP^s\to \XS$ for the
case $s=1$.
In this case,  we  make the identification $\CP^1=S^2=\C \cup \infty$ and choose the points $\infty$ and $[1,1,\cdots ,1]$ as the base points of
$\CP^1$ and $\XS$ respectively.
Then, by setting $z=\frac{z_0}{z_1}$,  for each $1\leq k \leq r$,
we can view $f_k$ as a monic polynomial  $f_k(z)\in\C [z]$
of degree $d_k$ in the complex variable  $z$.
Now we can define the space of holomorphic maps as follows.
\begin{dfn}\label{dfn: holomorphic}
{\rm
(i)
Let $\P^d$ denote the space of all monic polynomials 
$g(z)=z^d+a_1z^{d-1}+\cdots +a_{d-1}z+a_d\in \C [z]$ of degree
$d$, and
we set
\begin{equation}\label{eq: Pd}
\P^D=\P^{d_1}\times \P^{d_2}\times\cdots \times \P^{d_r}
\quad
\mbox{if }D=(d_1,\cdots ,d_r)\in \N^r.
\end{equation}
Note that there is a natural homeomorphism
$\phi:\P^d\cong \C^d$ given by
$\phi (z^d+\sum_{k=1}^d a_kz^{d-k})
=(a_1,\cdots ,a_d)\in\C^d.$
\par
(ii)
For any $r$-tuple $D=(d_1,\cdots ,d_r)\in \N^r$ 
satisfying the condition (\ref{equ: homogenous}.1),
let $\Hol_D^*(S^2,\XS)$ denote the
space consisting of all $r$-tuples
$f=(f_1(z),\cdots ,f_r(z))\in \P^D$ 
satisfying the following condition $(\dagger_{\Sigma})$:
\begin{enumerate}
\item[$(\dagger_{\Sigma})$]
For any $\sigma =\{i_1,\cdots ,i_s\}\in I(\KS)$,
the polynomials
$f_{i_1}(z),\cdots ,f_{i_s}(z)$ have no common root,
i.e.
$(f_{i_1}(\alpha),\cdots ,f_{i_s}(\alpha))\not= {\bf 0}_s=(0,\cdots ,0)$
for any $\alpha \in  \C$.
\end{enumerate}
By identifying 
$\XS =\mathcal{Z}_{\KS}(\C,\C^*)/\GS$ and $\CP^1=S^2=\C\cup\infty$,
one can define the natural inclusion map
$i_D:\Hol_D^*(S^2,\XS)\to \Map^*(S^2,\XS)=\Omega^2\XS$ 
by
\begin{equation}
i_D(f)(\alpha )=
\begin{cases}
[f_1(\alpha),f_2(\alpha),\cdots ,f_r(\alpha)] & \mbox{ if }\alpha \in\C
\\
[1,1,\cdots ,1] & \mbox{ if }\alpha =\infty
\end{cases}
\end{equation}
for $f=(f_1(z),\cdots ,f_r(z))\in \Hol_D^*(S^2,\XS)$,
where
we choose the points $\infty$ and $[1,1,\cdots ,1]$
as the base points of $S^2$ and $\XS$. 
\par\vspace{1mm}\par
Since the representation of polynomials in $\P^D$ representing a 
base point preserving holomorphic map of degree $D$
is uniquely determined, 
the space $\Hol_D^*(S^2\XS)$ can be identified with 
{\it the space of base point preserving holomorphic maps of 
degree} $D$.
Moreover,
since $\Hol_D^*(S^2,\XS)$ is path-connected,\footnote{%
By \cite[Remark 2.10]{KY9}, we see that $\Hol_D^*(S^2,\XS)$ is path-connected.
}
 the image of
$i_D$
is contained in a certain path-component of $\Omega^2\XS$,
which is denoted by $\Omega^2_D\XS$.
Thus we have a natural inclusion
\begin{equation}\label{eq: map-iD} 
i_D:\Hol_D^*(S^2,\XS)\to \Map^*_D(S^2,\XS)=\Omega^2_D\XS .
\quad
\qed
\end{equation}
}
\end{dfn}

\paragraph{Spaces of non-resultant systems of bounded multiplicity}
Now 
consider the space 
$\po^{D,\Sigma}_n(\F)$ for $\F =\C$.
For this purpose, we need the following notation.
\begin{dfn}\label{dfn: poly}
{\rm
For a monic polynomial $f(z)\in \P^d$
of degree $d$, let $F_n(f)(z)$ denote the $n$-tuple of monic polynomials 
of the same degree $d$ given by
\begin{equation}
F_n(f)(z)=(f(z),f(z)+f^{\p}(z),f(z)+f^{\p\p}(z),\cdots ,f(z)+f^{(n-1)}(z))
\end{equation}
as in (\ref{eq: Fn}).
Note that a monic polynomial $f(z)\in \P^d$ has a root $\alpha\in \C$
of multiplicity $\geq n$ iff
$F_n(f)(\alpha)={\bf 0}_n\in \C^n.$
\qed
}
\end{dfn}
Then the space $\po^{D,\Sigma}_n=\po^{D,\Sigma}_n(\C)$ can be redefined as follows.

\begin{dfn}
{\rm
(i)
For each $D=(d_1,\cdots ,d_r)\in \N^r$, $n\in \N$
and a fan $\Sigma$ in $\R^m$,
let $\po^{D,\Sigma}_n$ denote the space of
$r$-tuples
$
(f_1(z),\cdots ,f_r(z))\in \P^D
$
of $\C$-coefficients
monic polynomials satisfying the following condition $(\dagger_{\Sigma, n})$:
\begin{enumerate}
\item[$(\dagger_{\Sigma,n})$]
For any $\sigma =\{i_1,\cdots ,i_s\}\in I(\KS)$,
polynomials $f_{i_1}(z),\cdots ,f_{i_s}(z)$ have no common root of
multiplicity $\geq n$
(but they may have common roots of multiplicity $<n$).
\end{enumerate}
Note that the condition $(\dagger_{\Sigma})$ coincides with the condition
$(\dagger_{\Sigma,n})$ if $n=1$.
\par\vspace{1mm}\par
(ii)
When $\sum_{k=1}^rd_k\textbf{\textit{n}}_k={\bf 0}_n,$
define the map
$i_{D}:\po^{D,\Sigma}_n\to \Omega^2\XS (n)$ by
\begin{equation}\label{eq; def iD}
i_{D}(f)(\alpha)
=
\begin{cases}
[F_n(f_1)(\alpha),F_n(f_2)(\alpha),\cdots ,F_n(f_r)(\alpha)]
& \mbox{if }\alpha \in \C
\\
[\textbf{\textit{e}},\textbf{\textit{e}},
\cdots ,\textbf{\textit{e}}]
& \mbox{if }\alpha =\infty
\end{cases}
\end{equation}
for $f=(f_1(z),\cdots ,f_r(z))\in \po^{D,\Sigma}_n$ and
$\alpha \in \C \cup \infty =S^2$,
where the space $\XS (n)$ is the space defined as in
(\ref{eq: XSigma(n)}) and 
we set
$\textbf{\textit{e}}=(1,1,\cdots ,1)\in \C^n$.
\par
Since $\po^{D,\Sigma}_n$ is connected,\footnote{%
See Remark \ref{rmk: ESigma}.}
 the image of
$i_D$ is contained some path-component of
$\Omega^2\XS (n)$, which is denoted by
$\Omega^2_D\XS (n).$
Thus we have the map
\begin{equation}
i_D:\po^{D,\Sigma}_n\to \Omega^2_D\XS (n).
\qquad\qquad
\qed
\end{equation}
}
\end{dfn}
\paragraph{The numbers $\rmin (\Sigma)$ and
$d(D;\Sigma,n)$}
Before stating the main results of this paper, we need to define the positive integers
$\rmin (\Sigma)$ and
$d(D;\Sigma,n)$.
\begin{dfn}\label{dfn: rnumber}
{\rm
We say that a set $S=\{\textbf{\textit{n}}_{i_1},\cdots ,\textbf{\textit{n}}_{i_s}\}$
is {\it a primitive} 
 if  $\mbox{Cone}(S)\notin\Sigma$ and $\mbox{Cone}(T)\in \Sigma$ for any
 proper subset $T\subsetneqq S$.
Then we define  $d(D,\Sigma,n)$ to be  the positive integer given by
\begin{align}\label{eq: dDSigma}
d(D;\Sigma ,n)&=(2n\rmin (\Sigma)  -3)\lfloor \frac{d_{\rm min}}{n}\rfloor -2,
\end{align}
where  $\rmin (\Sigma)$ and $d_{min}=d_{min}(D)$ are the positive integers given by
\begin{align}\label{eq: rmin}
\rmin (\Sigma)
&=\min\{s\in\N :
\{\textbf{\textit{n}}_{i_1},\cdots ,\textbf{\textit{n}}_{i_s}\}
\mbox{ is primitive}\},
\\
d_{min}&=d_{min}(D)=
\min\{d_1,d_2,\cdots ,d_r\}.
\qquad\qquad\qquad\qquad
\qed
\end{align}
}
\end{dfn}
For connected space $X$, let $\Omega^2_0X$ denote the path-component of
$\Omega^2X$ which contains null-homotopic maps and
recall the following result.
\begin{thm}[\cite{KY9}]\label{thm: I:KY9}
Let $\XS$ be an $m$ dimensional simply connected non-singular toric variety
 such that 
the condition $($\ref{equ: homogenous}.1$)$ holds.
Then if $D=(d_1,\cdots ,d_r)\in \N^r$ and
$\sum_{k=1}^rd_k\textbf{\textit{n}}_k={\bf 0}_m$, 
the inclusion map
$$
i_D:\Hol_D^*(S^2,\XS) \to \Omega^2_D\XS
\simeq \Omega^2_0\XS
\simeq
\Omega^2\mathcal{Z}_{\KS}
$$ 
is a homotopy equivalence
through dimension $d(D;\Sigma ,1)=
(2\rmin (\Sigma)-3)d_{\min}-2$ if $\rmin (\Sigma)\geq 3$
and  a homology equivalence through dimension 
$d(D;\Sigma ,1)=d_{min}-2$ if $\rmin (\Sigma)=2$,
where
$\mathcal{Z}_K$ denotes the moment-angle complex of a 
simplicial complex $K$.\footnote{%
See Definition \ref{def: moment}.}
\qed
\end{thm}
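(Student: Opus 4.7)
The plan is to realise $\Hol_D^*(S^2,\XS) = \po^{D,\Sigma}_1$ as the complement of an algebraic discriminant
\[
\Sigma_D = \bigl\{(f_1,\ldots,f_r) \in \P^D : \exists\,\sigma \in I(\KS),\ \alpha \in \C\text{ with }f_i(\alpha) = 0\ \forall\, i \in \sigma\bigr\}
\]
inside the affine space $\P^D \cong \C^{|D|}$ with $|D| := \sum_{k=1}^r d_k$, and to analyse it by Vassiliev's non-degenerate simplicial resolution, anticipating the general machinery to be developed later in the paper. By Alexander duality in the one-point compactification of $\P^D$, understanding $H^*(\po^{D,\Sigma}_1;\Z)$ is equivalent to computing the Borel--Moore homology of $\Sigma_D$.

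First I would construct a non-degenerate simplicial resolution $\mathcal{X}^D \to \Sigma_D$ together with its truncated variant, filtered by the number $j$ of distinct ``bad'' points $\alpha \in \C$ at which some primitive index set $\sigma \in I(\KS)$ (of minimum size $\rmin(\Sigma)$) has a common root. This yields a Vassiliev-type spectral sequence converging to $H^{BM}_*(\Sigma_D;\Z)$. The crucial codimension count is that the $j$-th filtration stratum of the resolution has real codimension $j(2\rmin(\Sigma)-3)+1$ in $\P^D$, and $j$ is bounded above by $d_{\min}$ because each $f_k$ has at most $d_k$ roots. Alexander duality then converts this into the range $d(D;\Sigma,1)=(2\rmin(\Sigma)-3)d_{\min}-2$ in which the truncated and full resolutions agree on cohomology.

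Next I would construct a scanning map $s_D \colon \po^{D,\Sigma}_1 \to \Omega^2 \mathcal{Z}_{\KS}$ compatible up to homotopy with $i_D$, exploiting the equivalences $\Omega^2_D\XS \simeq \Omega^2_0\XS$ (from simple connectivity of $\XS$) and $\Omega^2_0\XS \simeq \Omega^2\mathcal{Z}_{\KS}$ (from the principal $\GS$-bundle $\mathcal{Z}_{\KS} \to \XS$, whose torus fibre is doubly-loop-contractible). Running the same simplicial-resolution argument on the stabilised system as $d_{\min}\to\infty$ and invoking a group-completion/McDuff-type theorem identifies the stable limit with $\Omega^2 \mathcal{Z}_{\KS}$. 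Comparing the two Vassiliev spectral sequences (stable and unstable), the stabilisation map --- and hence $i_D$ itself --- induces an isomorphism on (co)homology through dimension $d(D;\Sigma,1)$.

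The main obstacle is the case $\rmin(\Sigma) = 2$: the lowest-dimensional discriminant stratum then has complex codimension only $1$, so the Vassiliev spectral sequence contains a non-vanishing contribution in the $\pi_1$-detecting range, and the comparison can yield at best a homology equivalence. A secondary technical point is verifying simple connectivity of both $\po^{D,\Sigma}_1$ and the relevant component of the target when $\rmin(\Sigma)\geq 3$, so as to upgrade the homology equivalence to a homotopy equivalence via Whitehead's theorem. Both difficulties are controlled precisely by the codimension of the first stratum in the simplicial resolution.
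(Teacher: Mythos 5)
Your plan is essentially the paper's own route (the theorem is quoted from \cite{KY9}, and the present paper runs exactly this machinery for general $n$ in \S\ref{section: simplicial resolution}--\S\ref{section: stability}): discriminant plus Alexander duality, non-degenerate and truncated simplicial resolutions with the codimension count $k(2\rmin(\Sigma)-3)+1$, then scanning and stabilization identifying the stable limit with $\Omega^2\mathcal{Z}_{\KS}$, with the $\rmin(\Sigma)=2$ case degraded to a homology equivalence for the $\pi_1$ reason you give. One small correction: the truncation at $d_{\min}$ is needed because the affine-bundle description of the strata (independence of the linear conditions on coefficients) is only valid for $k\leq d_{\min}$, not because the number of bad points is bounded by $d_{\min}$ (it need not be); since you invoke the truncated resolution anyway, this does not affect the argument.
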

\paragraph{The main results}
The main result of this paper is a generalization of the above theorem
(Theorem \ref{thm: I:KY9}) to spaces of non-resultant systems of bounded multiplicity.
\begin{thm}\label{thm: I} 
Let $D=(d_1,\cdots ,d_r)\in \N^r$,
$n\geq 2$ and let $\XS$ be an $m$ dimensional simply connected non-singular toric variety 
such that 
the condition $($\ref{equ: homogenous}.1$)$ holds.
\par
$\I$ If
$\sum_{k=1}^rd_k\textbf{\textit{n}}_k={\bf 0}_m$, then 
the natural map
$$
i_D:\po^{D,\Sigma}_n \to \Omega^2_D\XS (n)\simeq \Omega^2_0\XS (n)\simeq
\Omega^2\mathcal{Z}_{\KS}(D^{2n},S^{2n-1})
$$ 
is a homotopy equivalence
through dimension $d(D;\Sigma ,n)$.
\par
$\II$
If
$\sum_{k=1}^rd_k\textbf{\textit{n}}_k\not= {\bf 0}_m$,  
there is a map
$$
j_D:\po^{D,\Sigma}_n \to  
\Omega^2\mathcal{Z}_{\KS}(D^{2n},S^{2n-1})
$$ 
which
is a homotopy equivalence
through dimension $d(D;\Sigma ,n)$.
\end{thm}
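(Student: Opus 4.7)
The plan is to deduce Theorem \ref{thm: I} by combining three ingredients: (a) a chain of formal identifications on the target side, (b) the stabilized scanning construction of Sections \ref{section: scanning maps}--\ref{section: stability}, and (c) the Vassiliev spectral sequence of Section \ref{section: simplicial resolution}. For (a), the equivalence $\Omega^2_D\XS(n) \simeq \Omega^2_0\XS(n)$ is the standard fact that any two path components of a double loop space are homotopy equivalent via the $H$-space multiplication. For $\Omega^2_0\XS(n) \simeq \Omega^2\mathcal{Z}_{\KS}(D^{2n},S^{2n-1})$, I would use that $\GS$ acts freely on $\mathcal{Z}_{\KS}(\C^n,(\C^n)^*)$ (Corollary \ref{crl: principal}), giving a principal $\GS$-bundle over $\XS(n)$. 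Since $\XS$ is simply connected, $\GS$ is a torus, so $\Omega^2\GS$ is contractible and the bundle long exact sequence yields $\Omega^2_0\XS(n) \simeq \Omega^2_0\mathcal{Z}_{\KS}(\C^n,(\C^n)^*)$. The pair $(\C^n,(\C^n)^*)$ deformation retracts onto $(D^{2n},S^{2n-1})$, and since $n \geq 2$ every factor in the polyhedral product is simply connected, so $\mathcal{Z}_{\KS}(D^{2n},S^{2n-1})$ itself is simply connected and its $\Omega^2$ is path connected.

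For Part (ii), I would construct $j_D$ via the stabilized scanning map of Section \ref{section: scanning maps}. The scanning construction produces a natural map $\po^{D,\Sigma}_n \to \Omega^2 E^{\Sigma}_n(\overline{U},\partial\overline{U})$, and composing with the identification $E^{\Sigma}_n(\overline{U},\partial\overline{U}) \simeq DJ(\KS(n))$ together with a double-loop delooping of $DJ(\KS(n))$ in terms of the polyhedral product produces the desired $j_D$. When the degree condition $\sum_k d_k \textbf{\textit{n}}_k = {\bf 0}_m$ holds, $j_D$ is homotopic to $i_D$ after the equivalences of the previous paragraph are applied, so Part (i) is reduced to Part (ii) in that case.

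To verify that $j_D$ is a homotopy equivalence through dimension $d(D;\Sigma,n)$, I would combine the scanning stability result Theorem \ref{thm: IV} with the Vassiliev spectral sequence of Section \ref{section: simplicial resolution} (Theorem \ref{thm: III} and Corollary \ref{crl: III}). The scanning map behaves as a quasi-fibration in a stable range, while the spectral sequence built from the non-degenerate simplicial resolution of the discriminant of $\po^{D,\Sigma}_n$ can be compared termwise with the analogous spectral sequence for $H_*(\Omega^2\mathcal{Z}_{\KS}(D^{2n},S^{2n-1});\Z)$, yielding agreement through dimension $d(D;\Sigma,n)$. Because $n \geq 2$ forces both sides to be simply connected in the relevant range, the resulting homology equivalence promotes to a homotopy equivalence.

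The main obstacle is the bookkeeping for the Vassiliev spectral sequence. The bound $d(D;\Sigma,n) = (2n\rmin(\Sigma)-3)\lfloor \dmin/n \rfloor - 2$ reflects two codimension contributions: the primitive-subset structure of $\Sigma$ (factor $\rmin(\Sigma)$) and the multiplicity-$n$ condition on roots (factor $n$). The delicate step is identifying the truncated simplicial resolution precisely and verifying that its associated filtration matches the target's spectral sequence through the claimed range. This follows the strategy of \cite{KY8} for $\XS = \CP^m$, but requires a careful adaptation to the combinatorics of a general simplicial complex $\KS$ and its non-maximal primitive subsets. Once this is carried out, the stability through dimension $d(D;\Sigma,n)$ in both Parts (i) and (ii) follows directly.
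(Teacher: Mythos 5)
Your proposal follows essentially the same route as the paper: part (i) is obtained by combining the degree-stabilization result (Theorem \ref{thm: III} and Corollary \ref{crl: III}) with the stable scanning identification (Theorem \ref{thm: IV}), and part (ii) is reduced to this by stabilizing into a degree satisfying $\sum_{k}d_k\textbf{\textit{n}}_k={\bf 0}_m$; the paper defines $j_D=i_{D+\textbf{\textit{a}}}\circ s_{D,D+\textbf{\textit{a}}}$ with $D+\textbf{\textit{a}}=n_0D_*$ rather than via scanning at degree $D$ itself, but the two constructions agree up to the stated equivalences. One inaccuracy worth flagging: the spectral-sequence comparison that proves Theorem \ref{thm: III} is between the truncated resolutions of the discriminants $\Sigma_D$ and $\Sigma_{D+\textbf{\textit{e}}_i}$ of consecutive degrees (induced by the open embedding $\tilde{s}_{D,i}$), not a termwise comparison with an ``analogous spectral sequence'' for $H_*(\Omega^2\mathcal{Z}_{\KS}(D^{2n},S^{2n-1});\Z)$ --- no such spectral sequence for the double loop space is constructed, and the loop space enters the argument only through the scanning map and Theorem \ref{thm: scanning map}.
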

\begin{crl}\label{crl: II}
Let $n\geq 2$, $D=(d_1,\cdots ,d_r)\in \N^r$, and
let $\XS$ be an $m$ dimensional compact smooth toric variety 
over $\C$ such that the condition
(\ref{equ: homogenous}.1) holds.
Let $\Sigma (1)$ denote the set of all one dimensional cones in $\Sigma$, and
let $\Sigma_1$ be any fan in $\R^m$ such that
$\Sigma (1)\subset \Sigma_1\subsetneqq \Sigma.$
\par
Then  $X_{\Sigma_1}$ is a non-compact smooth 
toric subvariety of $\XS$ and the following two statements hold:
\par
\begin{enumerate}
\item[$\I$]
 If $\sum_{k=1}^rd_k\textbf{\textit{n}}_k={\bf 0}_m$, the map
$$
i_D:\po^{D,\Sigma_1}_n\to \Omega^2_DX_{\Sigma_1}(n)
\simeq \Omega^2\mathcal{Z}_{\mathcal{K}_{\Sigma_1}}(D^{2n},S^{2n-1})
$$ 
 is a
homotopy equivalence through the dimension $d(D;\Sigma_1,n)$.
\item[$\II$] 
If $\sum_{k=1}^rd_k\textbf{\textit{n}}_k\not={\bf 0}_m$, there is a map
$$
j_D:\po^{D,\Sigma_1}_n\to 
\Omega^2\mathcal{Z}_{\mathcal{K}_{\Sigma_1}}(D^{2n},S^{2n-1})
$$ 
which is a
homotopy equivalence through dimension $d(D;\Sigma_1,n)$.
\end{enumerate}
\end{crl}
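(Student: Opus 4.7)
The plan is to reduce the corollary directly to Theorem \ref{thm: I} applied to the subfan $\Sigma_1$ in place of $\Sigma$. The content of the corollary is almost entirely the observation that all hypotheses of Theorem \ref{thm: I} transfer from $\Sigma$ to $\Sigma_1$, together with the standard fact that a subfan produces an open toric subvariety.

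First I would record the basic structural facts about $\Sigma_1$. Since $\Sigma_1\subset\Sigma$, every cone of $\Sigma_1$ is a face of some cone of $\Sigma$ and therefore is generated by part of a $\Z$-basis of $\Z^m$, so $X_{\Sigma_1}$ is smooth. The hypothesis $\Sigma(1)\subset\Sigma_1$, together with $\Sigma_1\subset\Sigma$, forces $\Sigma_1(1)=\Sigma(1)=\{\rho_1,\dots,\rho_r\}$, so $\Sigma_1$ has exactly the same set of primitive generators $\{\textbf{\textit{n}}_k\}_{k=1}^r$ as $\Sigma$. Consequently condition $(\ref{equ: homogenous}.1)$ holds for $\Sigma_1$ with the same tuple $D_*$, and by the criterion recalled in Remark \ref{rmk: assumption}, $X_{\Sigma_1}$ is simply connected because $\{\textbf{\textit{n}}_k\}_{k=1}^r$ still spans $\Z^m$ over $\Z$. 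Finally, since $\XS$ is compact, $\Sigma$ is a complete fan, and the strict inclusion $\Sigma_1\subsetneqq\Sigma$ means some top-dimensional cone of $\Sigma$ is absent from $\Sigma_1$, so $\Sigma_1$ is not complete and $X_{\Sigma_1}$ is non-compact. Standard toric-geometry gives the open embedding $X_{\Sigma_1}\subset\XS$ as a union of torus orbits.

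With these verifications in hand, Theorem \ref{thm: I} applies verbatim to the fan $\Sigma_1$ and the same tuple $D$. Since $\mathcal{K}_{\Sigma_1}$ is a simplicial complex on the same vertex set $[r]$ as $\mathcal{K}_{\Sigma}$, the space $\po^{D,\Sigma_1}_n$ is defined as in Definition \ref{def: poly} with condition $(\dagger_{\Sigma_1,n})$, and the stability dimension in Theorem \ref{thm: I} becomes $d(D;\Sigma_1,n)=(2n\,\rmin(\Sigma_1)-3)\lfloor d_{\min}/n\rfloor-2$ computed with respect to the primitive collections of $\Sigma_1$. Part (i) of Theorem \ref{thm: I} then yields statement (i), and part (ii) yields statement (ii).

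I do not expect any genuine obstacle: the only thing one must be careful about is that $\rmin(\Sigma_1)$ is in general different from $\rmin(\Sigma)$ (it can only decrease, since missing cones of $\Sigma$ produce additional primitive collections for $\Sigma_1$), but this is already accommodated by stating the conclusion in terms of $d(D;\Sigma_1,n)$ rather than $d(D;\Sigma,n)$. Everything else is a bookkeeping verification that the subfan inherits the four hypotheses (smoothness, simple-connectedness, condition $(\ref{equ: homogenous}.1)$, and the existence of the required $\C$-valued degree) needed to invoke Theorem \ref{thm: I}.
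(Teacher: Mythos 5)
Your proposal is correct and follows essentially the same route as the paper: observe that $\Sigma_1(1)=\Sigma(1)$ so that smoothness, simple connectivity, and condition $(\ref{equ: homogenous}.1)$ transfer to $\Sigma_1$, note that $X_{\Sigma_1}$ is a non-compact smooth toric subvariety (via the completeness criterion of Lemma \ref{lmm: toric}), and then invoke Theorem \ref{thm: I} for the fan $\Sigma_1$. Your explicit checks of simple connectivity and of the possible change in $\rmin(\Sigma_1)$ are slightly more detailed than the paper's, but the argument is the same.
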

Since the case $\XS=\CP^n$ 
was treated in \cite{KY9} and \cite{KY6}, we will take as an example
the case  where $\XS$  is the Hirzerbruch surface $H(k)$.
\begin{dfn}\label{dfn: Hirzerbruch}
{\rm
For an integer $k\in \Z$, let
$H(k)$ be {\it the Hirzerbruch surface} defined by
\begin{equation*}
H(k)=\{([x_0:x_1:x_2],[y_1:y_2])\in\CP^2\times \CP^1:x_1y_1^k=x_2y_2^k\}
\subset \CP^2\times \CP^1.
\end{equation*}
Since there are isomorphisms
$H(-k)\cong H(k)$ for $k \not=0$ and $H(0)\cong\CP^1\times \CP^1$,
without loss of generality we can assume that $k\geq 1$.
Let $\Sigma_k$ denote the fan
 in $\R^2$ given by
$$
\Sigma_k=
\big\{\mbox{Cone}({\bf n}_i,{\bf n}_{i+1})\ (1\leq i\leq 3),
\mbox{Cone}({\bf n}_4,{\bf n}_1),
\mbox{Cone}(\textit{\textbf{n}}_j)\ (1\leq j\leq 4),\ \{{\bf 0}\}\big\},
$$
where we set
$
\textit{\textbf{n}}_1
=(1,0),\  
\textit{\textbf{n}}_2
=(0,1),\  
\textit{\textbf{n}}_3
=(-1,k), 
\ 
\textit{\textbf{n}}_4
=(0,-1).
$
\par\vspace{1mm}\par
It is well-known that $\Sigma_k$ is the fan of the toric variety $H(k)$
and that the set of all one dimensional cones in $\Sigma_k$
is
$\Sigma_k(1)=\{\mbox{Cone}(\textit{\textbf{n}}_i):1\leq i\leq 4\}$.
Since $\{\textit{\textbf{n}}_1,\textit{\textbf{n}}_3\}$ and
$\{\textit{\textbf{n}}_2,\textit{\textbf{n}}_4\}$ are the 
only primitive
collections, $\rmin (\Sigma_k)=2$.
Moreover, for a $4$-tuple $D=(d_1,d_2,d_3,d_4)\in \N^4$,
the equality
$\sum_{k=1}^4d_k\textit{\textbf{n}}_k=\textbf{0}$ holds 
if and only if
$(d_1,d_2,d_3,d_4)=(d_1,d_2,d_1,kd_1+d_2)$, and
$
\dmin =\min \{d_1,d_2,d_3,d_4\}
=\min\{d_1,d_2\}.
$
\qed
}
\end{dfn}
Hence, by Corollary \ref{crl: II}, we have the following:
\begin{example}\label{example: H(k)}
Let $k\geq 1$ and $n\geq 2$ be  positive integers.
Let 
$\Sigma$ be a fan in $\R^2$ such that
$
\Sigma_k(1)=\{\mbox{\rm Cone}(\textit{\textbf{n}}_i):1\leq i\leq 4\}
\subset \Sigma \subsetneqq \Sigma_k$, where
$\Sigma_k$ is the fan given in
Example \ref{dfn: Hirzerbruch}.
Then
$\XS$ is a non-compact non-singular subvariety of $H(k)$
and the following two statements hold.
\begin{enumerate}
\item[$\I$]
If $D=(d_1,d_2,d_1,kd_1+d_2)$,
the  map
$$
i_D:\po^{D,\Sigma}_n\to \Omega^2_D\XS (n)\simeq
\Omega^2\mathcal{Z}_{\KS}(D^{2n},S^{2n-1})
$$ is a homotopy equivalence
through dimension 
$(4n-3)\lfloor\frac{\min \{d_1,d_2\}}{n}\rfloor-2$.
\item[$\II$]
If $D=(d_1,d_2,d_3,d_4)\in \N^4$, 
there is a map
$$
j_D:\po^{D,\Sigma}_n\to 
\Omega^2 \mathcal{Z}_{\KS}(D^{2n},S^{2n-1})
$$ 
which is a homotopy equivalence
through dimension 
$(4n-3)\lfloor \frac{d_{min}}{n}\rfloor-2$,
where we set
$d_{min}=\min \{d_1,d_2,d_3,d_4\}$.
\qed
\end{enumerate}
\end{example}
%

\section{Basic facts about polyhedral products}
\label{section: polyhedral products}

First, we recall some definitions and known results.

\begin{dfn}[\cite{BP}, Definition 6.27, Example 6.39]\label{def: moment}
{\rm
Let $K$ be a simplicial complex on the index set $[r]$.
We denote by $\mathcal{Z}_K$ and $DJ(K)$
{\it the moment-angle complex} of $K$ and 
{\it the Davis-Januszkiewicz space} of $K$, 
respectively,
which are
defined by}
\begin{equation}\label{DJ}
\mathcal{Z}_K=\mathcal{Z}_K(D^2,S^1),\quad
DJ(K)=\mathcal{Z}_K(\CP^{\infty},*).
\qquad\qquad
\qquad
\qed
\end{equation}
\end{dfn}
\begin{lmm}[\cite{BP}; Corollary 6.30, Theorem 6.33,
Theorem 8.9]\label{Lemma: BP}
Let $K$ be a simplicial complex on the index set $[r]$.
\par
$\I$
The space $\mathcal{Z}_K$ is $2$-connected, and
there is a fibration sequence
\begin{equation}
\mathcal{Z}_K \stackrel{}{\longrightarrow} DJ(K)
\stackrel{\subset}{\longrightarrow} (\CP^{\infty})^r.
\end{equation}
\par
$\II$
There is an $(S^1)^r$-equivariant deformation retraction
\begin{equation}\label{eq: retr}
ret:\mathcal{Z}_K(\C^n,(\C^n)^*)\stackrel{\simeq}{\longrightarrow}
\mathcal{Z}_K(D^{2n},S^{2n-1}).
\qquad
\qed
\end{equation}
\end{lmm}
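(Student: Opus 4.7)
My plan is to handle the two assertions separately; both are classical facts about polyhedral products which I would assemble from standard Borel-construction and homotopy-invariance arguments.

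For Part $(\mathrm{I})$, the key step is to identify $DJ(K)$ with the Borel construction $ET^r\times_{T^r}\mathcal{Z}_K$, where $T^r=(S^1)^r$. Using the model $\CP^\infty\simeq BS^1=ES^1/S^1$ and the fact that polyhedral products commute with the coordinatewise Borel construction applied to the $S^1$-equivariant pair $(D^2,S^1)$, one obtains
\[
DJ(K)=\mathcal{Z}_K(\CP^\infty,*)\;\simeq\;ET^r\times_{T^r}\mathcal{Z}_K(D^2,S^1)\;=\;ET^r\times_{T^r}\mathcal{Z}_K,
\]
whose associated Borel fibration $\mathcal{Z}_K\to ET^r\times_{T^r}\mathcal{Z}_K\to BT^r=(\CP^\infty)^r$ is the sequence asserted in the lemma. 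To deduce $2$-connectivity I would run the long exact sequence of this fibration. The base $(\CP^\infty)^r$ satisfies $\pi_1=0$ and $\pi_2\cong\Z^r$; under the standing convention that every singleton $\{k\}\subset[r]$ is a face of $K$, each of the $r$ coordinate generators of $\pi_2((\CP^\infty)^r)$ is represented by a $\CP^1$ lying entirely in the $k$-th copy of $\CP^\infty\subset DJ(K)$, so the inclusion $DJ(K)\hookrightarrow(\CP^\infty)^r$ is a $\pi_2$-isomorphism. Moreover $DJ(K)$ is simply-connected by a van Kampen argument applied to its cover by the simply-connected pieces $(\CP^\infty,*)^\sigma$. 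The long exact sequence then forces $\pi_i(\mathcal{Z}_K)=0$ for $i\le 2$.

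For Part $(\mathrm{II})$ I would invoke the homotopy invariance of the polyhedral product in the pair variable: whenever $f:(X,A)\to(Y,B)$ is a $G$-equivariant map of CW pairs such that both $f:X\to Y$ and $f|_A:A\to B$ are equivariant homotopy equivalences, the induced map $\mathcal{Z}_K(f):\mathcal{Z}_K(X,A)\to\mathcal{Z}_K(Y,B)$ is an equivariant homotopy equivalence. The standard proof expresses the polyhedral product as a colimit
\[
\mathcal{Z}_K(X,A)\;=\;\mathrm{colim}_{\sigma\in K}\;X^\sigma\times A^{[r]\setminus\sigma}
\]
of an equivariantly cofibrant diagram over the face poset of $K$, and then applies the pushout preservation of equivariant weak equivalences along equivariant cofibrations inductively. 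In our setting the $(S^1)^r$-equivariant inclusion $(D^{2n},S^{2n-1})\hookrightarrow(\C^n,(\C^n)^*)$ has components $D^{2n}\hookrightarrow\C^n$ and $S^{2n-1}\hookrightarrow(\C^n)^*$ that are $S^1$-equivariant deformation retractions via $x\mapsto x/\max(1,\|x\|)$ and $x\mapsto x/\|x\|$ respectively, so the machine produces the required $(S^1)^r$-equivariant homotopy equivalence.

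The main technical subtlety is upgrading this homotopy equivalence to a genuine strong deformation retraction. My plan is to exploit the cofibration structure: since $S^{2n-1}\hookrightarrow D^{2n}$ is an $S^1$-equivariant closed neighbourhood deformation retract, its coordinatewise incorporation into each piece $X^\sigma\times A^{[r]\setminus\sigma}$ is an equivariant NDR inclusion, and these patch correctly across the face poset of $K$ to yield a $T^r$-equivariant NDR inclusion $\mathcal{Z}_K(D^{2n},S^{2n-1})\hookrightarrow\mathcal{Z}_K(\C^n,(\C^n)^*)$. Combined with the equivariant homotopy equivalence of the previous paragraph, the standard NDR/deformation-retract criterion then upgrades the equivalence to the asserted $(S^1)^r$-equivariant strong deformation retraction.
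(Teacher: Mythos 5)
The paper gives no proof of this lemma at all --- it is quoted from Buchstaber--Panov with a \qed in the statement --- so the only meaningful comparison is with the arguments in the cited source, which your sketch essentially reconstructs. For part (I) your route (identify $DJ(K)$ with the Borel construction $ET^r\times_{T^r}\mathcal{Z}_K$ and run the long exact sequence of the Borel fibration over $BT^r=(\CP^{\infty})^r$) is the standard one and is correct, with two remarks. First, $2$-connectivity really does require that every singleton $\{k\}$ be a face of $K$ (a ghost vertex forces an $S^1$ factor on $\mathcal{Z}_K$); you flag this, and it holds for $K=\KS$ since each $\rho_k\in\Sigma(1)$. Second, you verify only that $\pi_2(DJ(K))\to\pi_2((\CP^{\infty})^r)$ is surjective, which kills $\pi_1(\mathcal{Z}_K)$ but not $\pi_2(\mathcal{Z}_K)$; for injectivity the quickest argument is that $DJ(K)$ and $(\CP^{\infty})^r$ share the same $3$-skeleton, so the inclusion is $3$-connected.

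Part (II) contains a genuine, though repairable, gap. The homotopy-invariance machinery you invoke needs the diagrams on \emph{both} sides to be cofibrant, but the inclusion $(\C^n)^*\hookrightarrow\C^n$ is not a cofibration: $(\C^n)^*$ is open and dense in $\C^n$, so any retraction of $\C^n\times I$ onto $\C^n\times\{0\}\cup(\C^n)^*\times I$ would agree with the identity on a dense subset and hence be the identity, which does not land in the target. Consequently the pieces $(\C^n)^{\sigma}\times((\C^n)^*)^{[r]\setminus\sigma}$ do not form an ``equivariantly cofibrant diagram'', and the pushout induction you describe does not apply verbatim to $\mathcal{Z}_K(\C^n,(\C^n)^*)$. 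This can be fixed: those pieces are open and closed under intersection, so their union is a homotopy colimit for that reason instead, after which the objectwise equivalences $(D^{2n})^{\sigma}\times(S^{2n-1})^{[r]\setminus\sigma}\hookrightarrow(\C^n)^{\sigma}\times((\C^n)^*)^{[r]\setminus\sigma}$ give the comparison; alternatively Buchstaber--Panov avoid the issue entirely by constructing the retraction explicitly on the orbit space (retracting $\mathcal{Z}_K(\R_{\geq 0},\R_{>0})$ onto the cubical complex $\mathcal{Z}_K([0,1],\{1\})$) and lifting it $T^r$-equivariantly. Your final step --- upgrading the equivariant equivalence to a strong deformation retraction via the NDR property of the closed, compact subspace $\mathcal{Z}_K(D^{2n},S^{2n-1})$ --- is sound once the equivalence is in hand.
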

\begin{lmm}[\cite{Pa1}; (6.2) and Proposition 6.7]
\label{lmm: principal}
Let $\Sigma$ be a fan in $\R^m$ and let
$\XS$ be a smooth toric variety such that the condition 
(\ref{equ: homogenous}.1) holds.
Then
there is an isomorphism 
\begin{equation}\label{eq: GS-eq}
\GS\cong \T^{r-m}_{\C}=(\C^*)^{r-m},
\end{equation}
Moreover,
the group $\GS$ acts on the space $\mathcal{Z}_{\mathcal{K}_{\Sigma}}(\C,\C^*)$
freely by the coordinate-wise action
and there is a principal
$\GS$-bundle sequence
\begin{equation}\label{eq: principal}
\GS \stackrel{}{\longrightarrow}
\mathcal{Z}_{\KS}(\C^n,(\C^n)^*)
\stackrel{q_{\Sigma}}{\longrightarrow}
\XS .
\qquad\quad \qed
\end{equation}
\end{lmm}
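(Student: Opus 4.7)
The plan is to split the statement into three independent claims and address each in turn: identifying the abstract group $\GS$, establishing freeness of the coordinate-wise $\GS$-action, and identifying the orbit space with the asserted base of the bundle.

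For the first claim, I would realize $\GS$ as the kernel of an explicit homomorphism of algebraic tori. Consider $\phi:(\C^*)^r\to(\C^*)^m$ given by $\phi(\mu_1,\ldots,\mu_r)=\bigl(\prod_{k=1}^r\mu_k^{\langle\textbf{\textit{n}}_k,\textbf{\textit{e}}_j\rangle}\bigr)_{j=1}^m$, where $\textbf{\textit{e}}_1,\ldots,\textbf{\textit{e}}_m$ is the standard basis of $\Z^m$. By the definition of $\GS$, $\ker\phi=\GS$. Simple-connectedness of $\XS$ (Remark \ref{rmk: assumption}) forces $\{\textbf{\textit{n}}_k\}_{k=1}^r$ to span $\Z^m$ over $\Z$, making $\phi$ surjective, so
\begin{equation*}
1\longrightarrow\GS\longrightarrow(\C^*)^r\stackrel{\phi}{\longrightarrow}(\C^*)^m\longrightarrow 1
\end{equation*}
is a short exact sequence of algebraic tori, which splits because the cokernel lattice is free. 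Hence $\GS\cong(\C^*)^{r-m}$.

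For the second claim, fix $\textbf{\textit{x}}=(\textbf{\textit{x}}_1,\ldots,\textbf{\textit{x}}_r)\in\mathcal{Z}_{\KS}(\C^n,(\C^n)^*)$ and set $\sigma(\textbf{\textit{x}})=\{k:\textbf{\textit{x}}_k={\bf 0}_n\}$. By (\ref{eq: polyhedral p}) and (\ref{eq: LSigma}), $\sigma(\textbf{\textit{x}})\in\KS$. If $\mu=(\mu_k)\in\GS$ fixes $\textbf{\textit{x}}$, then $\mu_k=1$ for every $k\notin\sigma(\textbf{\textit{x}})$ (pick a nonzero coordinate of $\textbf{\textit{x}}_k$), so it remains to show that the face stabilizer
\begin{equation*}
\GS^{\sigma}=\{\mu\in\GS:\mu_k=1\text{ for all }k\notin\sigma\}
\end{equation*}
is trivial for each $\sigma\in\KS$. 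This is precisely where the smoothness of $\XS$ enters: smoothness is equivalent to the statement that the primitive generators $\{\textbf{\textit{n}}_k:k\in\sigma\}$ of each cone extend to a $\Z$-basis of $\Z^m$. Choosing $\textbf{\textit{m}}\in\Z^m$ from the dual basis and plugging into the defining relation $\prod_k\mu_k^{\langle\textbf{\textit{n}}_k,\textbf{\textit{m}}\rangle}=1$ forces $\mu_k=1$ for each $k\in\sigma$ one at a time, yielding $\GS^{\sigma}=\{1\}$.

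For the third claim, once freeness is in hand, the orbit space $\mathcal{Z}_{\KS}(\C^n,(\C^n)^*)/\GS$ is, by definition (\ref{eq: XSigma(n)}), the base space $\XS(n)$ (which coincides with $\XS$ when $n=1$ by Theorem \ref{prp: Cox}(I), and should be read as $\XS(n)$ in the bundle sequence for general $n$). Because $\GS$ is a connected linear algebraic group acting freely and properly on a Hausdorff complex manifold, the standard slice-theorem / GIT argument upgrades the set-theoretic quotient to a principal $\GS$-bundle, completing the proof. The main obstacle is undoubtedly the freeness step: dropping smoothness yields at best an orbifold quotient with non-trivial isotropy on the torus-invariant strata, so the argument has to exploit the $\Z$-basis condition on every cone in an essential way.
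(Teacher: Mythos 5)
Your argument is correct and is essentially the standard proof of this result; note that the paper itself gives no proof at all here (the lemma is quoted verbatim from Panov's survey, and from Cox's construction), so the comparison is with that cited argument, which you have faithfully reconstructed. The two substantive points are exactly where you put them: the identification $\GS\cong(\C^*)^{r-m}$ uses that $\{\n_k\}_{k=1}^r$ spans $\Z^m$ over $\Z$ (the simple-connectedness hypothesis of the standing assumptions, not condition (2.15.1) by itself), and freeness reduces to the triviality of the stabilizers $\GS^{\sigma}$ for $\sigma\in\KS$, which is precisely the $\Z$-basis characterization of smoothness; your dual-basis computation is the right one. You are also right to flag the typo in the displayed sequence: for general $n$ the base must be $\XS(n)$, and indeed the paper's Corollary 3.4 treats $n\geq 2$ by observing that the action is the diagonal of the $n=1$ case. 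The one soft spot is the final step: properness of the $\GS$-action is not automatic for a non-compact torus acting on an open subset of $\C^{nr}$ and should not simply be asserted. The cleaner (and standard) route is to bypass the slice theorem entirely and exhibit explicit local trivializations: for each maximal cone $\sigma$, over the open set $U(\sigma)=\{\textbf{\textit{x}}: \textbf{\textit{x}}_k\neq{\bf 0}_n \mbox{ for } k\notin\sigma\}$ the extension of $\{\n_k\}_{k\in\sigma}$ to a $\Z$-basis yields an explicit section of the quotient map, giving local triviality (and hence properness) directly. With that substitution the proof is complete.
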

\begin{crl}\label{crl: principal}
The group  $\GS$ acts on the space 
$\mathcal{Z}_{\KS}(\C^n,(\C^n)^*)$ freely and
there is a principal $\GS$-bundle sequence
\begin{equation}
\GS \stackrel{}{\longrightarrow}
\mathcal{Z}_{\KS}(\C^n,(\C^n)^*)
\stackrel{q_{\Sigma}}{\longrightarrow}
\XS (n).
\end{equation}
\end{crl}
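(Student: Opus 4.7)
The plan is to reduce both assertions to the $n=1$ case already established in Lemma \ref{lmm: principal}, exploiting the fact that only the zero-pattern of a point in $\mathcal{Z}_{\KS}(\C^n,(\C^n)^*)$ controls its stabiliser under the $\GS$-action.

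For freeness, suppose $\mu=(\mu_1,\ldots,\mu_r)\in\GS$ fixes some $\textbf{\textit{x}}=(\textbf{\textit{x}}_1,\ldots,\textbf{\textit{x}}_r)\in\mathcal{Z}_{\KS}(\C^n,(\C^n)^*)$. The identity $\mu_k\textbf{\textit{x}}_k=\textbf{\textit{x}}_k$ forces $\mu_k=1$ whenever $\textbf{\textit{x}}_k\neq{\bf 0}_n$. Since $\KS$ is closed under taking subsets, the polyhedral product description shows that $\sigma:=\{k:\textbf{\textit{x}}_k={\bf 0}_n\}$ lies in $\KS$. Set $y=(y_1,\ldots,y_r)\in\C^r$ by $y_k=0$ for $k\in\sigma$ and $y_k=1$ otherwise; then $y\in\mathcal{Z}_{\KS}(\C,\C^*)$ and $\mu\cdot y=y$ by construction, so Lemma \ref{lmm: principal} forces $\mu=(1,\ldots,1)$.

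For the principal bundle structure, I would verify that the free $\GS$-action is proper and then invoke the standard result that a free proper action of a complex Lie group on a complex manifold yields a principal bundle over the orbit space. Properness reduces to the $n=1$ case once more: given sequences with $g_i\cdot\textbf{\textit{x}}_i\to\textbf{\textit{w}}$ and $\textbf{\textit{x}}_i\to\textbf{\textit{x}}$, choose $\sigma\in\KS$ with $\textbf{\textit{x}}_k\neq{\bf 0}_n$ for $k\notin\sigma$; for each such $k$ the scalar $(g_i)_k$ converges in $\C^*$. The freeness argument above also shows that the restriction homomorphism $\GS\to(\C^*)^{[r]\setminus\sigma}$ is injective, and being a morphism of complex tori it is therefore a closed algebraic embedding, so the full sequence $g_i$ converges in $\GS\cong(\C^*)^{r-m}$.

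The substantive step is freeness; once it is in hand, the bundle structure is essentially formal, and I do not expect a genuine obstacle. As an alternative route bypassing properness, one can construct explicit holomorphic local sections over each $\GS$-invariant open set $V_\sigma^n=\{\textbf{\textit{x}}:\textbf{\textit{x}}_k\neq{\bf 0}_n\text{ for }k\notin\sigma\}$ by lifting the sections of the $n=1$ principal bundle (\ref{eq: principal}), but either approach yields the same conclusion.
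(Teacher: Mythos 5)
Your freeness argument is correct, and it is actually more careful than the paper's own proof, which disposes of the whole corollary in one line by saying that the action on $\mathcal{Z}_{\KS}(\C^n,(\C^n)^*)$ is ``the diagonal one of the case $n=1$''. The subtlety that the coordinate slices of a point of $\mathcal{Z}_{\KS}(\C^n,(\C^n)^*)$ need not themselves lie in $\mathcal{Z}_{\KS}(\C,\C^*)$ (a nonzero vector in $\C^n$ can have vanishing entries) is real, and your device of passing to the auxiliary point $y$ with the same zero--pattern, which lies in $\mathcal{Z}_{\KS}(\C,\C^*)$ because $\KS$ is closed under subsets, handles it cleanly. So the part of the statement that the paper actually argues is fully covered by your proposal.

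The gap is in the properness step, and it is exactly where you say you ``do not expect a genuine obstacle''. From $\textbf{\textit{x}}_k\neq{\bf 0}_n$ and $(g_i)_k(\textbf{\textit{x}}_i)_k\to\textbf{\textit{w}}_k$ you only get that $(g_i)_k$ converges in $\C$; the limit is $0$ precisely when $\textbf{\textit{w}}_k={\bf 0}_n$, and nothing you have written excludes the configuration $\textbf{\textit{x}}_k\neq{\bf 0}_n$, $\textbf{\textit{w}}_k={\bf 0}_n$. Ruling it out is not formal: it is equivalent to the separatedness of the quotient and genuinely uses the fan axioms (one must play the zero--set of $\textbf{\textit{x}}$ against the zero--set of $\textbf{\textit{w}}$ and use that two cones of $\Sigma$ meet in a common face). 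The standard counterexample shows the step fails without those axioms: the free action of the subtorus $\{(t,t^{-1})\}\subset(\C^*)^2$ on $\C^2\setminus\{{\bf 0}_2\}=\mathcal{Z}_{\partial\Delta^1}(\C,\C^*)$ is not proper (take $\textbf{\textit{x}}_i=(1,1/i)\to(1,0)$ and $g_i=1/i$, so $g_i\cdot\textbf{\textit{x}}_i\to(0,1)$ while $g_i\to 0$), the quotient being the line with doubled origin; this subtorus is of the form $\GS$ only for data violating the common--face condition. So either supply that argument, or take your alternative route of explicit local sections over the invariant sets $V_\sigma^n$ (equivalently, use the identification $\mathcal{Z}_{\KS}(\C^n,(\C^n)^*)\cong\mathcal{Z}_{\KS(n)}(\C,\C^*)$ of Remark \ref{rmk: toric-variety-remark} and argue as in Lemma \ref{lmm: principal}); that route is the one that actually carries the local triviality and should be treated as the main argument rather than an aside.
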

\begin{proof}
If $n=1$, the assertion follows from
Lemma \ref{lmm: principal} and assume that $n\geq 2$.
Since the action of $\GS$ on $\mathcal{Z}_{\KS}(\C^n,(\C^n)^*)$
is the diagonal one of the case $n=1$, this action is also free
and we obtain the desired assertion.
\end{proof}
\begin{lmm}\label{lmm: XS}
If the condition (\ref{equ: homogenous}.1) is
satisfied, 
the space $\XS$ is simply connected and $\pi_2(\XS)=\Z^{r-m}$. 
\end{lmm}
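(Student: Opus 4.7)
The plan is to deduce Lemma \ref{lmm: XS} from the principal bundle given in Lemma \ref{lmm: principal} and the $2$-connectivity of moment-angle complexes established in Lemma \ref{Lemma: BP}. The bundle sequence for $n=1$,
$$
\GS \longrightarrow \mathcal{Z}_{\KS}(\C,\C^*) \stackrel{q_{\Sigma}}{\longrightarrow} \XS,
$$
is available precisely under the hypothesis $(\ref{equ: homogenous}.1)$, and this is the only ingredient needed to push the homotopy calculation through.

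First I would recall the homotopy type of the fiber. By the isomorphism (\ref{eq: GS-eq}) we have $\GS \cong (\C^*)^{r-m}$, which is homotopy equivalent to the torus $(S^1)^{r-m}$, so
$$
\pi_0(\GS)=0, \qquad \pi_1(\GS) \cong \Z^{r-m}, \qquad \pi_k(\GS)=0 \text{ for } k\geq 2.
$$
Next I would identify the homotopy type of the total space: by the equivariant deformation retraction (\ref{eq: retr}) in Lemma \ref{Lemma: BP}(II), $\mathcal{Z}_{\KS}(\C,\C^*) \simeq \mathcal{Z}_{\KS}(D^2,S^1) = \mathcal{Z}_{\KS}$, and by Lemma \ref{Lemma: BP}(I) the moment-angle complex $\mathcal{Z}_{\KS}$ is $2$-connected. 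Hence $\pi_1(\mathcal{Z}_{\KS}(\C,\C^*)) = \pi_2(\mathcal{Z}_{\KS}(\C,\C^*)) = 0$.

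Finally I would feed these into the long exact homotopy sequence of the fibration $q_{\Sigma}$:
$$
\pi_2(\mathcal{Z}_{\KS}(\C,\C^*)) \to \pi_2(\XS) \to \pi_1(\GS) \to \pi_1(\mathcal{Z}_{\KS}(\C,\C^*)) \to \pi_1(\XS) \to \pi_0(\GS).
$$
The two outer terms and $\pi_1(\mathcal{Z}_{\KS}(\C,\C^*))$ vanish, so the connecting map $\pi_2(\XS) \to \pi_1(\GS) \cong \Z^{r-m}$ is an isomorphism and $\pi_1(\XS)=0$, which is exactly the assertion.

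There is no serious obstacle here; the only point requiring a little care is checking that Lemma \ref{lmm: principal} is indeed applicable, i.e.\ that $\XS$ is smooth and that $(\ref{equ: homogenous}.1)$ is enough to invoke that lemma (both are hypotheses either stated or standing throughout the section). Once the principal bundle is in hand, the computation is a one-line application of the long exact sequence.
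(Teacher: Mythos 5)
Your proof is correct and self-contained. The paper's own ``proof'' is merely a citation of \cite[Lemma~3.4]{KY9}, and the argument you give — the long exact homotopy sequence of the principal $\GS$-bundle from Lemma~\ref{lmm: principal} (with $n=1$), combined with $\GS\simeq (S^1)^{r-m}$ and the $2$-connectivity of the moment-angle complex from Lemma~\ref{Lemma: BP} — is exactly the standard argument that the cited lemma encapsulates.
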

\begin{proof}
This follows from \cite[Lemma 3.4]{KY9}.
\end{proof}
\begin{lmm}[\cite{CLS}]\label{lmm: toric}
Let $\XS$ be a toric variety determined by a fan $\Sigma$ in $\R^m$.
Then $\XS$ is compact if and only if $\R^m=\bigcup_{\sigma\in \Sigma}\sigma$.
\qed
\end{lmm}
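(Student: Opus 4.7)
The plan is to invoke the valuative criterion of properness, since a variety over $\C$ is compact in the classical topology if and only if it is proper over $\mathrm{Spec}(\C)$. The reduction specific to toric varieties is that one only needs to test the criterion against morphisms arising from one-parameter subgroups of the big torus $\T^m_\C\subset \XS$. Thus the assertion reduces to showing: for every $\textbf{\textit{v}}\in \Z^m$, the one-parameter subgroup $\lambda_{\textbf{\textit{v}}}:\C^*\to \T^m_\C\hookrightarrow \XS$ extends across $t=0$ to a morphism $\C\to \XS$ if and only if $\textbf{\textit{v}}\in |\Sigma|:=\bigcup_{\sigma\in \Sigma}\sigma$.

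To establish this equivalence I would use the standard affine cover $\XS=\bigcup_{\sigma\in\Sigma}U_\sigma$ with $U_\sigma=\mathrm{Spec}\,\C[\sigma^\vee\cap\Z^m]$. On a generator $\chi^{\textbf{\textit{u}}}$ of the coordinate ring of $U_\sigma$ one computes $\chi^{\textbf{\textit{u}}}(\lambda_{\textbf{\textit{v}}}(t))=t^{\langle\textbf{\textit{u}},\textbf{\textit{v}}\rangle}$, so $\lim_{t\to 0}\lambda_{\textbf{\textit{v}}}(t)$ lies in $U_\sigma$ exactly when $\langle\textbf{\textit{u}},\textbf{\textit{v}}\rangle\ge 0$ for every $\textbf{\textit{u}}\in\sigma^\vee\cap\Z^m$; by the duality $\sigma=(\sigma^\vee)^\vee$ for strongly convex rational polyhedral cones, this is equivalent to $\textbf{\textit{v}}\in\sigma$. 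Since the cover is finite, such a limit exists somewhere in $\XS$ iff $\textbf{\textit{v}}\in |\Sigma|$.

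Given the claim above, both directions of the lemma follow immediately. If $|\Sigma|=\R^m$, then every lattice vector lies in some cone, every one-parameter limit exists in $\XS$, the valuative criterion is satisfied, and $\XS$ is compact. Conversely, if $|\Sigma|\ne \R^m$, then since $|\Sigma|$ is closed and invariant under positive dilation, its complement contains an open cone; dilating any interior ball of this cone to sufficiently large radius produces a lattice point $\textbf{\textit{v}}\in \Z^m\setminus |\Sigma|$, and the corresponding $\lambda_{\textbf{\textit{v}}}$ then has no limit in $\XS$, showing that $\XS$ fails the valuative criterion and is non-compact.

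The main technical point to justify is the equivalence between extensibility of $\lambda_{\textbf{\textit{v}}}$ into the chart $U_\sigma$ and the cone-membership $\textbf{\textit{v}}\in\sigma$, which rests on the duality $\sigma=(\sigma^\vee)^\vee$ for strongly convex rational polyhedral cones and on a careful analysis of the semigroup algebra $\C[\sigma^\vee\cap\Z^m]$. These ingredients, together with the torus-specific reduction of the valuative criterion, are developed systematically in \cite[Ch.~3]{CLS}, from which the lemma follows as a direct application; the sketch above merely reproduces the main steps of that argument.
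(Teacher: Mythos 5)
Your sketch is correct and is essentially the standard completeness criterion argument from Cox--Little--Schenck (Theorem 3.4.11 and its corollaries), which is exactly the source the paper cites; the paper itself states this lemma without proof. The only genuinely nontrivial step you defer to \cite{CLS} is the reduction of the valuative criterion to one-parameter subgroups of the torus, and since the lemma is quoted as a known result, that deferral is appropriate; the chart-by-chart computation $\chi^{\textbf{\textit{u}}}(\lambda_{\textbf{\textit{v}}}(t))=t^{\langle\textbf{\textit{u}},\textbf{\textit{v}}\rangle}$ and the dilation-invariance argument producing a lattice point outside $|\Sigma|$ are both correct.
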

\section{The
Vassiliev spectral sequence}\label{section: simplicial resolution}
First, recall  the definitions of the non-degenerate simplicial resolution
and the associated truncated simplicial resolution (\cite{Mo2},  \cite{Va}).
\begin{dfn}\label{def: def}
{\rm
(i)
For a finite set $\textbf{\textit{v}} =\{v_1,\cdots ,v_l\}\subset \R^N$,
let $\sigma (\textbf{\textit{v}})$ denote the convex hull spanned by 
$\textbf{\textit{v}}.$
Let $h:X\to Y$ be a surjective map such that
$h^{-1}(y)$ is a finite set for any $y\in Y$, and let
$i:X\to \R^N$ be an embedding.
Let  $\mathcal{X}^{\Delta}$  and $h^{\Delta}:{\mathcal{X}}^{\Delta}\to Y$ 
denote the space and the map
defined by
\begin{equation}
\mathcal{X}^{\Delta}=
\big\{(y,u)\in Y\times \R^N:
u\in \sigma (i(h^{-1}(y)))
\big\}\subset Y\times \R^N,
\ h^{\Delta}(y,u)=y.
\end{equation}
The pair $(\mathcal{X}^{\Delta},h^{\Delta})$ is called
{\it the simplicial resolution of }$(h,i)$.
In particular, it 
is called {\it a non-degenerate simplicial resolution} if for each $y\in Y$
any $k$ points of $i(h^{-1}(y))$ span $(k-1)$-dimensional simplex of $\R^N$.
\par
(ii)
For each $k\geq 0$, let $\mathcal{X}^{\Delta}_k\subset \mathcal{X}^{\Delta}$ be the subspace
of
the union of the $(k-1)$-skeletons of the simplices over all the points $y$ in $Y$
given by
\begin{equation}
\mathcal{X}_k^{\Delta}=\big\{(y,u)\in \mathcal{X}^{\Delta}:
u \in\sigma (\textbf{\textit{v}}),
\textbf{\textit{v}}=\{v_1,\cdots ,v_l\}\subset i(h^{-1}(y)),l\leq k\big\}.
\end{equation}
We make the identification $X=\mathcal{X}^{\Delta}_1$ by identifying 
 $x\in X$ with the pair
$(h(x),i(x))\in \mathcal{X}^{\Delta}_1$,
and we note that  there is an increasing filtration
\begin{equation}\label{equ: filtration}
\emptyset =
\mathcal{X}^{\Delta}_0\subset X=\mathcal{X}^{\Delta}_1\subset \mathcal{X}^{\Delta}_2\subset
\cdots \subset \mathcal{X}^{\Delta}_k\subset 
\cdots \subset \bigcup_{k= 0}^{\infty}\mathcal{X}^{\Delta}_k=\mathcal{X}^{\Delta}.
\end{equation}
Since the map $h^{\Delta}:\mathcal{X}^{\Delta}\stackrel{}{\rightarrow}Y$
 is a proper map, it extends to the map
 ${h}_+^{\Delta}:\mathcal{X}^{\Delta}_+\stackrel{}{\rightarrow}Y_+$
 between the one-point compactifications,
 where $X_+$ denotes the one-point compactification of a locally compact space $X$.
\qed
}
\end{dfn}
\begin{lmm}[\cite{Va}, \cite{Va2} (cf. Lemma 3.3 in \cite{KY7})]
\label{lemma: simp}
Let $h:X\to Y$ be a surjective map such that
$h^{-1}(y)$ is a finite set for any $y\in Y,$ and let
$i:X\to \R^N$ be an embedding.
Then
if $X$ and $Y$ are semi-algebraic spaces and the
two maps $h$, $i$ are semi-algebraic maps, then the map
${h}^{\Delta}_+:\mathcal{X}^{\Delta}_+\stackrel{\simeq}{\rightarrow}Y_+$
is a homology equivalence.\footnote{%
It is known that $h^{\Delta}_+$ is actually a homotopy equivalence
\cite[page 156]{Va2}.
However, in this paper we do not need this stronger assertion.}
\end{lmm}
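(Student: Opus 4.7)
The plan is to establish that $h^{\Delta}_+:\mathcal{X}^{\Delta}_+ \to Y_+$ is a homology equivalence by exploiting the fact that every fiber of the original map $h^{\Delta}:\mathcal{X}^{\Delta}\to Y$ is contractible. By construction, the fiber $(h^{\Delta})^{-1}(y)$ is the convex hull $\sigma(i(h^{-1}(y)))$ of a finite set of points in $\R^N$, and hence is a closed simplex, which is contractible and in particular $\Z$-acyclic. The fiber of $h^{\Delta}_+$ over the added basepoint is a single point, which is trivially acyclic.

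First, I would verify that $h^{\Delta}$ is a proper map. This is the one place where the semi-algebraic hypotheses genuinely enter: by Hardt's semi-algebraic triviality theorem, $Y$ decomposes into finitely many semi-algebraic strata over each of which $h$, and the induced map $h^{\Delta}$, is semi-algebraically trivial. Combined with the finiteness of $h^{-1}(y)$, this forces $i(h^{-1}(y))$ to remain uniformly bounded over any compact $K\subset Y$, so $(h^{\Delta})^{-1}(K)$ is compact. Properness permits a continuous extension to one-point compactifications and makes $h^{\Delta}_+$ a map between compact Hausdorff semi-algebraic spaces, with every point-inverse either a simplex or a point.

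Next, I would appeal to the Vietoris-Begle type homology equivalence theorem for proper surjections with acyclic fibers between locally compact, triangulable spaces (which holds in the semi-algebraic category by semi-algebraic triangulation). Since $\mathcal{X}^{\Delta}$ and $Y$ are semi-algebraic, semi-algebraic triangulation gives CW structures with respect to which $h^{\Delta}_+$ is a cellular-type proper map whose point-inverses are acyclic; the Leray spectral sequence of $h^{\Delta}_+$ then collapses, yielding $H_*(\mathcal{X}^{\Delta}_+;\Z)\cong H_*(Y_+;\Z)$.

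The main obstacle is the properness step: a priori the simplices $\sigma(i(h^{-1}(y)))$ could escape to infinity in $\R^N$ as $y$ moves in $Y$, which would obstruct a clean compactification. The semi-algebraic finiteness theorems—concretely, Hardt triviality or the existence of a semi-algebraic triangulation compatible with $h$—are exactly what rules this out. An alternative, more hands-on route would be to use the skeletal filtration $\{\mathcal{X}^{\Delta}_k\}$ and compare the associated Vassiliev spectral sequences of $\mathcal{X}^{\Delta}_+$ and $Y_+$ (viewing $Y_+=\mathcal{X}^{\Delta}_{\infty,+}$ after passing to the colimit), but this essentially recreates the Vietoris-Begle argument in coordinates and is no shorter than invoking it directly.
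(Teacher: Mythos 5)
The paper does not actually prove this lemma: its ``proof'' is a one-line citation of Vassiliev (\cite{Va}, Lemma 1, p.~90). So your proposal is an attempt at the underlying argument rather than a reproduction of the paper's. Your overall strategy --- every point-inverse of $h^{\Delta}_+$ is a simplex or a single point, hence acyclic, so a Vietoris--Begle/Leray-with-compact-supports argument (using semi-algebraic triangulability to identify \v{C}ech with singular (co)homology) yields the isomorphism --- is indeed the standard route and is sound \emph{once $h^{\Delta}$ is known to be proper}.

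The genuine gap is in your properness step. It is not true that semi-algebraicity of $h$ and $i$ together with finiteness of the fibers forces $i(h^{-1}(K))$ to be bounded for compact $K\subset Y$: Hardt triviality gives semi-algebraic local triviality over the strata of a finite partition of $Y$, but it says nothing about how $i\circ h^{-1}$ behaves as $y$ approaches a lower-dimensional stratum. Concretely, take $Y=[0,1]$, $X=\{(t,1/t):0<t\le 1\}\cup\{(0,0)\}\subset\R^2$, $h$ the first-coordinate projection and $i$ the inclusion: all stated hypotheses hold and every fiber is a single point, yet $h^{\Delta}=h$ is not proper and $h_+$ is not even continuous. Properness of $h^{\Delta}$ is therefore an extra input, not a consequence of the hypotheses; the paper simply asserts it in Definition \ref{def: def}, and in the actual application ($\pi_D\colon Z_D\to\Sigma_D$) it holds because the roots in question are roots of monic polynomials whose coefficients range over a compact set, hence are uniformly bounded. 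You should either take properness as part of the setup (as the paper implicitly does) or verify it for the specific $h$ and $i$ at hand, rather than derive it from Hardt triviality. The remainder of your argument (acyclic fibers, collapse of the spectral sequence, passage from \v{C}ech to singular homology via triangulability and universal coefficients) is fine.
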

\begin{proof}
The assertion follows from \cite[Lemma 1 (page 90)]{Va}.
\end{proof}
\begin{rmk}\label{Remark: homotopy equ}
{\rm
Under the same assumption as Lemma \ref{lemma: simp},
there exists always a non-degenerate simplicial resolution of the map $h$.
In fact,
even for a  surjective map $h:X\to Y$ which is not finite to one,  
it is still possible to construct an associated non-degenerate simplicial resolution.
See \cite[Remark 6.4]{KY9} in detail.
}
\qed
\end{rmk}
\begin{dfn}\label{def: 2.3}
{\rm
Let $h:X\to Y$ be a surjective semi-algebraic map between semi-algebraic spaces, 
$j:X\to \R^N$ be a semi-algebraic embedding, and let
$(\mathcal{X}^{\Delta},h^{\Delta}:\mathcal{X}^{\Delta}\to Y)$
denote the associated non-degenerate  simplicial resolution of $(h,j)$. 
Then
for each positive integer $k\geq 1$, we denote by
$h^{\Delta}_k:X^{\Delta}(k)\to Y$
{\it the truncated $($after the $k$-th term$)$  simplicial resolution} of $Y$ as in \cite{Mo3}.
Note that 
that there is a natural filtration
$$
 X^{\Delta}_0\subset
 X^{\Delta}_1\subset
\cdots 
\subset X^{\Delta}_l\subset X^{\Delta}_{l+1}\subset \cdots
\subset  X^{\Delta}_k\subset X^{\Delta}_{k+1}
=X^{\Delta}_{k+2}
=\cdots =X^{\Delta}(k),
$$
where $X^{\Delta}_0=\emptyset$,
$X^{\Delta}_l=\mathcal{X}^{\Delta}_l$ if $l\leq k$ and
$X^{\Delta}_l=X^{\Delta}(k)$ if $l>k$.
\qed
}
\end{dfn}
\begin{lmm}[\cite{Mo3}, cf. Remark 2.4 and Lemma 2.5 in \cite{KY4}]\label{Lemma: truncated}
Under the same assumptions as in Definition \ref{def: 2.3}, the map
$h^{\Delta}_k:X^{\Delta}(k)\stackrel{\simeq}{\longrightarrow} Y$ 
is a homotopy equivalence.
\qed
\end{lmm}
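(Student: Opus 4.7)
The plan is to reduce Lemma \ref{Lemma: truncated} to a fiber-by-fiber analysis of $h^{\Delta}_k$ and then bootstrap the resulting fiberwise contractibility to a homotopy equivalence on total spaces. First, directly from the construction of $X^{\Delta}(k)$ in Definition \ref{def: 2.3}, I would check that for every $y \in Y$ the fiber $(h^{\Delta}_k)^{-1}(y)$ is contractible: if $|h^{-1}(y)| \leq k$ then the fiber already appears at stage $\mathcal{X}^{\Delta}_{|h^{-1}(y)|}$ and equals the simplex $\sigma(i(h^{-1}(y)))$, while if $|h^{-1}(y)| > k$ the fiber is obtained from the $(k-1)$-skeleton of $\sigma(i(h^{-1}(y)))$ by the coning-off performed in passing from $X^{\Delta}_{k} = \mathcal{X}^{\Delta}_k$ to $X^{\Delta}_{k+1} = X^{\Delta}(k)$, and is again contractible.

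The key step is then to compare $X^{\Delta}(k)$ with the full non-degenerate simplicial resolution $\mathcal{X}^{\Delta}$. By Lemma \ref{lemma: simp} (in the sharper form recalled in the footnote) the map $h^{\Delta}\colon \mathcal{X}^{\Delta} \to Y$ is a proper homotopy equivalence. I would construct a natural semi-algebraic map $\phi_k\colon X^{\Delta}(k) \to \mathcal{X}^{\Delta}$ over $Y$ which is the identity on the common subspace $\mathcal{X}^{\Delta}_k$ and which sends each newly attached cone radially into the corresponding higher-dimensional simplex of $\mathcal{X}^{\Delta}$ (mapping the cone point to the barycenter of $\sigma(i(h^{-1}(y)))$). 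On every fiber $\phi_k$ carries one contractible space into another and so is a fiberwise homotopy equivalence; moreover $h^{\Delta}\circ \phi_k = h^{\Delta}_k$.

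To promote the fiberwise statement to a global one, I would invoke the semi-algebraic triangulation theorem to triangulate $Y$, $\mathcal{X}^{\Delta}$, and $X^{\Delta}(k)$ compatibly so that $h^{\Delta}$, $h^{\Delta}_k$, and $\phi_k$ all become simplicial maps of CW complexes, and then apply a Vietoris--Begle / Smale-type theorem: a proper simplicial surjection with contractible point-preimages between CW complexes is a weak homotopy equivalence, hence by Whitehead a genuine homotopy equivalence. Applied to $\phi_k$, this yields $\phi_k \simeq \mathrm{id}$, and composing with $h^{\Delta}$ gives the desired equivalence $h^{\Delta}_k \colon X^{\Delta}(k) \xrightarrow{\simeq} Y$. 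The main obstacle is precisely this last step: fibers of $h^{\Delta}_k$ have genuinely different shapes as $y$ varies (a simplex of growing dimension up to dimension $k-1$, then a cone on a $(k-1)$-skeleton), so $h^{\Delta}_k$ is not a fibration in any naive sense, and one must be careful that the triangulation respects the resolution filtration in order for the Vietoris--Begle argument to apply in the semi-algebraic category; this is exactly the technical content supplied by Mostovoy in \cite{Mo3}.
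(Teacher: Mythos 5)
Your opening fiber analysis is the right starting point and is exactly what the cited argument of Mostovoy rests on: over a point $y$ with $|h^{-1}(y)|\leq k$ the fiber of $h^{\Delta}_k$ is the full simplex $\sigma(i(h^{-1}(y)))$ (or a cone on it), and over a point with $|h^{-1}(y)|>k$ it is the cone attached to the $(k-1)$-skeleton at the last stage of the truncation; in either case it is contractible. Your closing machinery (semi-algebraic triangulation plus the fact that a proper simplicial surjection with contractible point-inverses is a homotopy equivalence) is also the correct engine. The problem is that you feed the wrong map into it. The comparison map $\phi_k:X^{\Delta}(k)\to\mathcal{X}^{\Delta}$ that you interpose is broken in two ways. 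First, it need not exist as a continuous map: the assignment $y\mapsto$ barycenter of $\sigma(i(h^{-1}(y)))$ is discontinuous wherever the cardinality of $h^{-1}(y)$ jumps (if $h^{-1}(y_j)=\{a_j,b_j,c_j\}$ with $a_j,b_j\to p$ and $c_j\to q$, the barycenters converge to $(2p+q)/3$ while the barycenter of the limiting fiber $\{p,q\}$ is $(p+q)/2$), so sending cone points to barycenters does not define a map over $Y$. Second, even granting $\phi_k$, it is injective and far from surjective (the image of a cone on the $(k-1)$-skeleton does not fill a simplex of dimension $>k$), so the Vietoris--Begle/Smale-type criterion you quote does not apply to it, and the conclusion ``$\phi_k\simeq\mathrm{id}$'' does not parse, since $\phi_k$ is a map between different spaces.

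The repair is simply to delete the detour: apply your triangulation-plus-contractible-point-inverses criterion directly to $h^{\Delta}_k:X^{\Delta}(k)\to Y$ itself, which is a proper semi-algebraic surjection whose point-inverses you have already shown to be contractible. That is precisely the route taken in \cite{Mo3} (and recalled in \cite{KY4}); no comparison with the full resolution $\mathcal{X}^{\Delta}$ is needed, and indeed the full resolution only yields a homology equivalence via Lemma \ref{lemma: simp}, whereas the whole point of truncating is to obtain contractible fibers and hence a genuine homotopy equivalence.
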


Next, 
we construct the Vassiliev spectral sequence.
From now on, we always assume that $\Sigma$ is a fan in $\R^m$ such that
$\XS$ is simply connected and that 
the condition  (\ref{equ: homogenous}.1) is satisfied.
Moreover,  $D=(d_1,\cdots ,d_r)\in \N^r$ will always denote a fixed
$r$-tuple of positive integers.
\begin{dfn}\label{Def: 3.1}
{\rm
(i)
Let $\Sigma_D$ denote {\it the discriminant} of $\po^{D,\Sigma}_n$ in $\P^D$ 
given by the complement
\begin{align*}
\Sigma_D&=
\P^D \setminus \po^{D,\Sigma}_n
\\
&=
\{(f_1(z),\cdots ,f_{r}(z))\in \P^D :
(f_1(x),\cdots ,f_{r}(x))\in L_n(\Sigma)
\mbox{ for some }x\in \C\},
\end{align*}
where
$\dis L_n(\Sigma) =
\bigcup_{\sigma\in I(\mathcal{K}_{\Sigma})}L_{\sigma}(\C^n)
=
~\bigcup_{\sigma\subset[r],\sigma\notin K_{\Sigma}}
L_{\sigma}(\C^n)
$
as in (\ref{eq: L(Sigma)}).
\par
(ii)
Let  $Z_D\subset \Sigma_D\times \C$
denote 
{\it the tautological normalization} of 
 $\Sigma_D$ consisting of 
 all pairs 
$(G,x)=((f_1(z),\ldots ,f_{r}(z)),
x)\in \Sigma_D\times\C$
satisfying the condition
$F(x)=(F_n(f_1)(x),\cdots ,F_n(f_{r})(x))\in L_n(\Sigma)$.
Projection on the first factor  gives a surjective map
$\pi_D :Z_D\to \Sigma_D$.
\qed
}
\end{dfn}
\begin{rmk}
{\rm
Let $\sigma_k\in [r]$ for $k=1,2$.
It is easy to see that
$L_{\sigma_1}(\C^n)\subset L_{\sigma_2}(\C^n)$ if
$\sigma_1\supset \sigma_2$.
Letting
\begin{equation*}
Pr(\Sigma)=\{\sigma =\{i_1,\cdots ,i_s\} \subset [r]:
\{\textit{\textbf{n}}_{i_1},\cdots ,\textit{\textbf{n}}_{i_s}\}
\mbox{ is a primitive collection}\},
\end{equation*}
we see that
\begin{equation}
L_n(\Sigma)=\bigcup_{\sigma\in Pr(\Sigma)}L_{\sigma}(\C^n)
\end{equation}
and by using (\ref{eq: rmin}) we obtain the equality 
\begin{equation}\label{eq: dim rmin}
\dim L_n(\Sigma)=2n(r-\rmin (\Sigma)).
\qquad\qquad\qquad
\qed
\end{equation}
}
\end{rmk}
Our goal in this section is to construct, by means of the
{\it non-degenerate} simplicial resolution  of the discriminant, 
a spectral sequence converging to the homology of
$\po^{D,\Sigma}_n$.
\begin{dfn}\label{non-degenerate simp.}
{\rm
(i)
For an $r$-tuple $E=(e_1,\cdots ,e_r)\in (\Z_{\geq 0})^r$ of non-negative integers,
let $N(E)$ denote the non-negative integer given by
\begin{equation}\label{eq: ND}
N(E)=\sum_{k=1}^r e_k.
\end{equation}
\par
(ii)
For each based space $X$, let $F(X,d)$ denote the ordered configuration space
of distinct $d$ points in $X$
defined by
\begin{equation}
F(X,d)=\{(x_1,\cdots ,x_d)\in X^d:x_i\not= x_j\mbox{ if }i\not= j\}.
\end{equation}
Since the symmetric group $S_d$ of $d$-letters acts on $F(X,d)$ freely by 
permuting coordinates
and let $C_d(X)$ denote the unordered configuration space of $d$-distinct
points in $X$ given by the orbit space
\begin{equation}
C_d(X)=F(X,d)/S_d.
\end{equation}
\par
(iii)
Let
 $L_{k;\Sigma}\subset (\C\times L_n(\Sigma))^k$ denote the subspace
defined by
$$
L_{k;\Sigma}=\{((x_1,s_1),\cdots ,(x_k,s_k)): 
x_j\in \C,s_j\in L_n(\Sigma),
x_i\not= x_j\mbox{ if }i\not= j\}.
$$
The symmetric group $S_k$ on $k$ letters  acts on $L_{k;\Sigma}$ by permuting coordinates. Let
$C_{k;\Sigma}$ denote the orbit space
\begin{equation}\label{Ck}
C_{k;\Sigma}=L_{k;\Sigma}/S_k.
\end{equation}
Note that $C_{k;\Sigma}$ 
is a cell-complex of  dimension (by (\ref{eq: dim rmin}))
\begin{equation}\label{eq: dim CSigma}
\dim C_{k;\Sigma}
=
2k(1+nr-nr_{\rm min}(\Sigma)).
\end{equation}
\par
(iv)
Let 
$(\mathcal{X}^D,{\pi}^{\Delta}_D:\mathcal{X}^D\to\Sigma_D)$ 
be the non-degenerate simplicial resolution associated to the surjective map
$\pi_D:Z_D\to \Sigma_D$ 
with the natural increasing filtration as in Definition \ref{def: def},
$$
\emptyset =
\SZ_0
\subset \SZ_1\subset 
\SZ_2\subset \cdots
\subset 
\SZ=\bigcup_{k= 0}^{\infty}\SZ_k.
\quad
\qed
$$
}
\end{dfn}
By Lemma \ref{lemma: simp},
the map
$\pi_D^{\Delta}$
extends to  a homology equivalence
$\pi_{D+}^{\Delta}:\SZ_+\stackrel{\simeq}{\rightarrow}{\Sigma_{D+}}.$
Since
${\mathcal{X}_k^{D}}_+/{\SZ_{k-1}}_+
\cong (\SZ_k\setminus \SZ_{k-1})_+$,
we have a spectral sequence 
\begin{equation}
\big\{E_{t;D}^{k,s},
d_t:E_{t;D}^{k,s}\to E_{t;D}^{k+t,s+1-t}
\big\}
\Rightarrow
H^{k+s}_c(\Sigma_D;\Z),
\end{equation}
where
$E_{1;D}^{k,s}=H^{k+s}_c(\SZ_k\setminus\SZ_{k-1};\Z)$ and
$H_c^k(X;\Z)$ denotes the cohomology group with compact supports given by 
$
H_c^k(X;\Z)= \tilde{H}^k(X_+;\Z).
$
\par
Since there is a homeomorphism
$\P^D\cong \C^{N(D)}$,
by Alexander duality  there is a natural
isomorphism
\begin{equation}\label{Al}
\tilde{H}_k( \po^{D,\Sigma}_n;\Z)\cong
H_c^{2N(D)-k-1}(\Sigma_D;\Z)
\quad
\mbox{for any }k.
\end{equation}
By reindexing we obtain a
spectral sequence
\begin{eqnarray}\label{SS}
&&
\big\{E^{t;D}_{k,s}, \tilde{d}^{t}:E^{t;D}_{k,s}\to E^{t;D}_{k+t,s+t-1}
\big\}
\Rightarrow H_{s-k}
( \po^{D,\Sigma}_n;\Z),
\end{eqnarray}
where
$E^{1;D}_{k,s}=
H^{2N(D)+k-s-1}_c(\SZ_k\setminus\SZ_{k-1};\Z).$
\begin{lmm}\label{lemma: vector bundle*}
Let 
$d_{\rm min}=\min\{d_1,\cdots ,d_r\}$
and suppose that $1\leq k\leq \lfloor \frac{d_{min}}{n}\rfloor$.
Then the space
$\SZ_k\setminus\SZ_{k-1}$
is homeomorphic to the total space of a real affine
bundle $\xi_{D,k,n}$ over $C_{k;\Sigma}$ with rank 
$l_{D,k,n}=2N(D)-2nrk+k-1$.
\end{lmm}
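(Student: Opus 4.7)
The plan is to exhibit an explicit surjection $q : \SZ_k \setminus \SZ_{k-1} \to C_{k;\Sigma}$ whose fibres are the product of an open $(k-1)$-simplex with a complex affine subspace of $\P^D$, and then to promote this description to a locally trivial affine bundle of rank $l_{D,k,n}$.

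First, I would describe the map. By Definition \ref{non-degenerate simp.} together with the construction of the non-degenerate simplicial resolution in Definition \ref{def: def}, a point of $\SZ_k \setminus \SZ_{k-1}$ is a pair $(G,u)$ with $G\in \Sigma_D$ and $u$ lying in the relative interior of a unique $(k-1)$-simplex whose vertices are $\{i(x_j,G)\}_{j=1}^{k}$, for $k$ pairwise distinct points $x_1,\dots,x_k\in\C$ satisfying $s_j:=F_n(G)(x_j)\in L_n(\Sigma)$. I would set $q(G,u):=\{(x_j,s_j)\}_{j=1}^{k}\in C_{k;\Sigma}$, which is well defined because the relative interior of a simplex determines its vertex set.

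Next, I would identify the fibre over a configuration $c=\{(x_j,s_j)\}_{j=1}^{k}$ as the product of the open standard $(k-1)$-simplex $\Delta^{k-1}_{\circ}$ (parametrising $u$ in barycentric coordinates) with the set $A_c$ of those $G=(f_1,\dots,f_r)\in\P^D$ satisfying $F_n(f_i)(x_j)=(s_j)_i$ for all $1\le i\le r$, $1\le j\le k$. The key linear-algebra point is that the operator sending $f$ to $F_n(f)(x)$ differs from the jet operator $f\mapsto(f(x),f'(x),\dots,f^{(n-1)}(x))$ by an invertible upper-triangular change of coordinates, so prescribing $F_n(f_i)(x_j)=(s_j)_i$ is equivalent to prescribing the $(n-1)$-jet of $f_i$ at each $x_j$. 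By Hermite interpolation (non-vanishing of the generalised Vandermonde determinant associated to the distinct nodes $x_1,\dots,x_k$), these $kn$ linear conditions on the $d_i$ coefficients of the monic polynomial $f_i\in\P^{d_i}$ are $\C$-linearly independent as soon as $kn\le d_i$. The hypothesis $1\le k\le\lfloor d_{\min}/n\rfloor$ yields $kn\le d_{\min}\le d_i$ for every $i$, whence $A_c$ is a non-empty complex affine subspace of $\P^D$ of complex codimension $knr$, and so of real dimension $2N(D)-2nrk$. Adding the $(k-1)$-dimensional simplex factor gives fibres of real dimension $2N(D)-2nrk+k-1=l_{D,k,n}$.

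Finally, I would establish local triviality. Over a small open $U\subset C_{k;\Sigma}$ a continuous local ordering of the pairs $((x_j,s_j))$ is available; the Hermite data then vary continuously over $U$, and an explicit Hermite-interpolation formula produces a continuous local section $U\to A_c$ (for example, the unique polynomial of degree $<kn$ realising the jets, embedded into $\P^{d_i}$ by adding $z^{d_i}$ and a continuous choice of complementary coefficients). Translating by this section identifies $A_c$ with a fixed real affine space of dimension $2N(D)-2nrk$, giving a local trivialisation $q^{-1}(U)\cong U\times\Delta^{k-1}_{\circ}\times\R^{2N(D)-2nrk}$ and thus the asserted affine bundle structure $\xi_{D,k,n}$. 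The principal obstacle is the Hermite-independence step, which is exactly what forces the numerical restriction $k\le\lfloor d_{\min}/n\rfloor$ appearing in the statement; once this is in place, the remaining content is routine verification of continuity.
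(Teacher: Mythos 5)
Your proposal is correct and follows essentially the same route as the paper: project to $C_{k;\Sigma}$, identify each fibre as an open $(k-1)$-simplex times the affine subspace of $\P^D$ cut out by the jet conditions $f_i^{(l)}(x_j)=s_{i,j}^{(l)}$, and verify that these $kn$ conditions are independent so the codimension is $nkr$. The only difference is that where the paper establishes the rank of the condition matrix by explicit Gaussian elimination of the confluent Vandermonde blocks, you invoke the standard Hermite-interpolation fact (after observing that $F_n$ differs from the jet map by an invertible triangular transformation), which is a cleaner way to reach the same conclusion.
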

\begin{proof}
Suppose that $1\leq k\leq \lfloor \frac{d_{min}}{n}\rfloor$.
The argument is exactly analogous to the one in the proof of  
\cite[Lemma 4.4]{AKY1}.
Namely, an element of $\SZ_k\setminus\SZ_{k-1}$ is represented by 
$(f,u)=((f_1(z),\cdots ,f_{r}(z)),u)$, where 
$f=(f_1(z),\cdots ,f_{r}(z))$ is an 
$r$-tuple of monic polynomials in $\Sigma_D$
satisfying the condition
\begin{equation}
{\bf F} (x_j)=
(F_n(f_1)(x_j),\cdots ,F_n(f_{r})(x_j))\in L_n(\Sigma)
\quad
\mbox{for each }1\leq j\leq k
\end{equation}
and $u$ is an element of the interior of
the span of the images of some $k$ distinct points 
$\{x_1,\cdots, x_k\}\in C_k(\C)$ 
under a suitable embedding. 
By using \cite[Lemma 2.5]{KY10}
one can show that 
the $k$ distinct points $\{x_j\}_{j=1}^k$ 
are uniquely determined by $u$.
%
%
Thus there are projection maps
$
\pi_{k,D} :\mathcal{X}^{D}_k\setminus
\mathcal{X}^{D}_{k-1}\to C_{k;\Sigma}
$
defined by
$((f_1(z),\cdots ,f_{r}(z)),u) \mapsto 
\{(x_1,{\bf F}  (x_1)),\dots, (x_k,{\bf F} (x_k))\}$. 
\par
Let $c=\{(x_j,s_j)\}_{j=1}^k\in C_{k;\Sigma}$
$(x_j\in \C$, $s_j\in L_n(\Sigma))$ be any fixed element and consider the fibre  $\pi_{k,D}^{-1}(c)$.
For this purpose,
we write $s_j=(s_{1,j},\cdots ,s_{r,j})$
for each $1\leq j\leq k$
 with $s_{i,j}\in \C^n$ and
consider the $r$-tuple $f=(f_1(z),\cdots ,f_r(z))
\in \P^D$
of monic polynomials satisfying
the condition 
\begin{align}\label{equ: pik}
{\bf F} (x_j)&=(F_n(f_1)(x_j),\cdots ,F_n(f_{r})(x_j))=s_j
\quad
\mbox{for  each } 1\leq j\leq k
\\
\nonumber 
&\Leftrightarrow
F_n(f_t)(x_j)=s_{t,j}
\quad
\mbox{for each }1\leq j\leq k\mbox{ and } 1\leq t\leq r.
\end{align}
If we set
$s_{t,j}=(s_{t,j}^{(0)},s_{t,j}^{(0)}+s_{t,j}^{(1)},s_{t,j}^{(0)}+s_{t,j}^{(2)},\cdots ,s_{t,j}^{(0)}+s_{t,j}^{(n-1)})\in \C^n$
with
$s_{t,j}^{(l)}\in \C$,
\begin{equation}\label{eq: condition Fn}
F_n(f_t)(x_j)=s_{t,j}
\ \Leftrightarrow \
f_t^{(l)}(x_j)=s_{t,j}^{(l)}
\quad
\mbox{for each }0\leq l\leq n-1.
\end{equation}
In general, 
the condition $f_t^{(l)}(x_j)=s_{t,j}^{(l)}$ gives
one  linear condition on the coefficients of $f_t$,
and determines an affine hyperplanes in $\P^{d_t}(\C)$. 
Indeed,
if we set $f_t(z)=z^{d_t}+\sum_{i=0}^{d_t-1}a_{i}z^{i}$,
then
$f_t(x_j)=s_{t,j}^{(0)}$ for any $1\leq j\leq k$
if and only if
$A_1\textbf{\textit{x}}=\textbf{\textit{b}}_1$, where
{\small
\begin{equation*}\label{equ: matrix equation}
A_1=
\begin{bmatrix}
1 & x_1 & x_1^2 & \cdots & x_1^{d_t-1}
\\
1 & x_2 & x_2^2 & \cdots & x_2^{d_t-1}
\\
\vdots & \ddots & \ddots & \ddots & \vdots
\\
1 & x_k & x_k^2 & \cdots & x_k^{d_t-1}
\end{bmatrix}
,\ \textbf{\textit{x}}=
\begin{bmatrix}
a_{0}\\ a_{1} \\ \vdots 
\\ a_{d_t-1}
\end{bmatrix}
,\ 
\textbf{\textit{b}}_1=
\begin{bmatrix}
s_{t,1}^{(0)}-x_1^{d_t}\\ s_{t,2}^{(0)}-x_2^{d_t} \\ \vdots 
\\ s_{t,k}^{(0)}-x_k^{d_t}
\end{bmatrix}
\end{equation*}
}
\newline
Similarly,
$f_t^{\p}(x_j)=s_{t,j}^{(1)}$ for any $1\leq j\leq k$
if and only if
$A_2\textbf{\textit{x}}=\textbf{\textit{b}}_2$, where
{\small
\begin{equation*}\label{equ: matrix equation}
A_2=
\begin{bmatrix}
0 & 1 & 2x_1 & 3x_1^2 & \cdots & (d_t-1)x_1^{d_t-2}
\\
0 &1 & 2x_2 & 3x_2^2 & \cdots & (d_t-1)x_2^{d_t-2}
\\
 \vdots & \vdots & \ddots & \ddots & \ddots & \vdots
\\
0 &1 & 2x_k & 3x_k^2 & \cdots & (d_t-1)x_k^{d_t-2}
\end{bmatrix}
, \ 
\textbf{\textit{b}}_2=
\begin{bmatrix}
s_{t,1}^{(1)}-d_tx_1^{d_t-1}\\ s_{t,2}^{(1)}-d_tx_2^{d_t-1} \\ \vdots 
\\ s_{t,k}^{(1)}-d_tx_k^{d_t-1}
\end{bmatrix}
\end{equation*}
}
\noindent
and
$f_t^{\p\p}(x_j)=s_{t,j}^{(2)}$ for any $1\leq j\leq k$
if and only if
$A_3\textbf{\textit{x}}=\textbf{\textit{b}}_3$, where
{\small
\begin{equation*}\label{equ: matrix equation}
A_3=
\begin{bmatrix}
0 &0 & 2 & 6x_1 &  \cdots & (d_t-1)(d_t-2)x_1^{d_t-3}
\\
0 & 0 &2 & 6x_2 &  \cdots & (d_t-1)(d_t-2)x_2^{d_t-3}
\\
\vdots & \vdots &  \ddots & \ddots & \ddots & \vdots
\\ 
0 & 0 &2 & 6x_k  & \cdots & (d_t-1)(d_t-2)x_k^{d_t-3} 
\end{bmatrix}
, \ 
\textbf{\textit{b}}_3
=
\begin{bmatrix}
s_{t,1}^{(2)}-d_t(d_t-1)x_1^{d_t-1}\\ s_{t,2}^{(2)}-d_t(d_t-1)x_2^{d_t-1} \\ \vdots 
\\ s_{t,k}^{(2)}-d_t(d_t-1)x_k^{d_t-1}
\end{bmatrix}
\end{equation*}
}
\par
\noindent{and so on.}
Since $\{x_i\}_{i=1}^k\in C_k(\C)$, 
by Gaussian elimination of rows of
matrices, 
the matrix $A_1$ reduces to the matrix $B_1$, where
$\dis s_i(l)=\sum_{i_1+\cdots +i_l=i}x_1^{i_1}x_2^{i_2}\cdots x_l^{i_l}$
and  
$B_1$ is the matrix given by
{\small
$$
\begin{bmatrix}
1 \ & x_1 \ & x_1^2\  & x_1^3 \ & x_1^4 \  & x_1^5 \  & \cdots \ &\cdots \ & x_1^{d_t-2} & x_1^{d_t-1} \ 
\\
0 \ & 1 \ & s_1(2)\  & s_2(2)\  & s_3(2)\  & s_4(2)\  & \cdots \ & \cdots \ & s_{d_t-3}(2)\  & s_{d_t-2}(2)
\\
0 \ & 0 \ & 1 \ & s_1(3) \ & s_2(3) \  & s_3(3) \ & \cdots & \cdots & s_{d_t-4}(3)\  & s_{d_t-3}(3) \ 
\\
0 & 0 & 0 & 1 & s_1(4) & s_2(4) & \cdots & \cdots & s_{d_t-5}(4) & s_{d_t-4}(4)
\\
\vdots  &\vdots & \ddots &\ddots  &\ddots & \ddots &\ddots  &\ddots &\vdots   &\vdots
\\
0 & \cdots &\cdots  & \cdots & 0 & 1 & s_1(k)\mbox{ } &s_2(k) & \cdots & s_{d_t-k}(k)
\end{bmatrix}
$$
}
\noindent
Similarly, 
by easy Gaussian elimination of rows of
matrices, 
the matrix $A_2$ 
reduces to the matrix $B_2$,
where $B_2$ is the matrix of the following form
{\small
\begin{equation*}
\begin{bmatrix}
 0 & 1 & 2x_1 & 3x_1^2 & 4x_1^3 & 5x_1^4 & \cdots & \cdots 
 &   (d_t-1)x_1^{d_t-2}
 \\
0 & 0 & 2 &3s_1(2)\  & 4s_2(2)\  & 5s_3(2)\  & \cdots & \cdots 
&
  (d_t-1)s_{d_t-3}(2)
 \\
 0 & 0 & 0 & 3 & 4s_1(3)\  & 5s_2(3)\  & \cdots & \cdots 
 &
 (d_t-1)s_{d_t-4}(3)
 \\
 \vdots & \vdots &\ddots & \ddots & \ddots & \ddots & \ddots &\ddots & \ddots 
 \\
0 & \cdots & \cdots &\cdots & 0 & k-1\mbox{ } 
&
ks_1(k)\mbox{ } & \cdots &
(d_t-1)s_{d_t-k-1}(k)
\end{bmatrix}
\end{equation*}
}
\noindent
Analogously, the matrix $A_3$ 
reduces to the matrix $B_3$,
where $B_3$ is the matrix of the following form
{\small
\begin{equation*}
\begin{bmatrix}
0 & 0 & 2 & 6x_1 & 12 x_1^2 & 20x_1^3 &\cdots 
&
d_t(d_t-1)x_1^{d_t-3}
\\
0 & 0& 0 &6 & 12 s_1(2)\  &20 s_2(2)\  &\cdots &
d_t(d_t-1)s_{d_t-k-2}(2)
\\
\vdots& \vdots &\ddots&\ddots &\ddots &\ddots & \ddots & \vdots 
\\
0& \cdots &\cdots &\cdots & 0 & (k-2)(k-3)\mbox{ }\mbox{ }  
\mbox{ }&\cdots
&\  d_t(d_t-1)s_{d_t-k-3}(k)
\end{bmatrix}
\end{equation*}
}
\noindent
If we repeat this process, we finally obtain the reduced $(k\times d_t)$
matrices $\{B_l\}_{l=1}^n$
such that each $A_l$ reduces to the matrix $B_l$.
Now 
define the $(lk\times d_t)$ matrix $C_l$ (for $1\leq l\leq n)$
inductively
by $C_1=B_1$ and
$C_l=
\begin{bmatrix}
C_{l-1}
\\
B_l
\end{bmatrix}$
for $2\leq l\leq n$.
Then
by induction on $t$ and some tedious calculations, we see that
each matrix $C_l$ has rank $kl$ for each $1\leq l\leq n$.
Thus  the space of monic polynomials $f_t(z)\in \P^{d_t}$ which satisfies
(\ref{eq: condition Fn})
is the intersection of $nk$ affine hyperplanes in general position
and it is an affine subspace of $\P^{d_t}\cong \C^{d_t}$ with
 codimension $nk$.
Hence,
the fibre $\pi_{k,D}^{-1}(c)$ is homeomorphic  to the product of an open $(k-1)$-simplex
 with the real affine space of dimension
 $2\sum_{i=1}^r(d_i-nk)=2N(D)-2nrk$.
It is now easy to show that  $\pi_{k,D}$ satisfies the local triviality.
Thus, it is the real affine bundle over $C_{k;\Sigma}$ with rank $l_{D,k,n}
=2N(D)-2nrk+k-1$.
\end{proof}
\begin{lmm}\label{lemma: E11}
If $1\leq k\leq  \lfloor \frac{d_{\rm min}}{n}\rfloor$, there is a natural isomorphism
$$
E^{1;D}_{k,s}\cong
H^{2nrk-s}_c(C_{k;\Sigma};\pm \Z),
$$
where 
the twisted coefficients system $\pm \Z$  comes from
the Thom isomorphism.
\end{lmm}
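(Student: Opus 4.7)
The plan is to apply the Thom isomorphism for compactly supported cohomology to the affine bundle produced in Lemma \ref{lemma: vector bundle*}. Since $1\leq k\leq \lfloor d_{\rm min}/n\rfloor$, that lemma identifies $\SZ_k\setminus \SZ_{k-1}$ with the total space of a real affine bundle $\xi_{D,k,n}\to C_{k;\Sigma}$ of rank $l_{D,k,n}=2N(D)-2nrk+k-1$. The Thom isomorphism for a real rank-$l$ vector bundle $\xi\to B$ (applied to compactly supported cohomology) gives a natural isomorphism $H^{*}_c(E(\xi);\Z)\cong H^{*-l}_c(B;\mathcal{O}(\xi))$, where $\mathcal{O}(\xi)$ denotes the orientation local system of $\xi$.

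First I would substitute this into the definition $E^{1;D}_{k,s}=H^{2N(D)+k-s-1}_c(\SZ_k\setminus\SZ_{k-1};\Z)$ and carry out the degree book-keeping:
\begin{equation*}
2N(D)+k-s-1-l_{D,k,n}=2N(D)+k-s-1-(2N(D)-2nrk+k-1)=2nrk-s,
\end{equation*}
which already produces the asserted cohomological degree $2nrk-s$ on $C_{k;\Sigma}$.

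Next I would identify the local coefficient system with $\pm\Z$. The vector bundle $\xi_{D,k,n}$ decomposes fibrewise (following the description in the proof of Lemma \ref{lemma: vector bundle*}) as a product of the affine space of dimension $2(N(D)-nrk)$ of admissible polynomial tuples (a complex-linear, hence canonically oriented, bundle) and the interior of the $(k-1)$-simplex spanned by the embedded images of the distinct points $x_1,\dots,x_k$. The complex factor contributes no twist; the simplex factor is oriented by an ordering of its vertices, and the quotient by the $S_k$-action defining $C_{k;\Sigma}$ permutes these vertices by the standard permutation representation, which acts on a chosen orientation of the simplex by the sign character. Hence the orientation local system $\mathcal{O}(\xi_{D,k,n})$ is precisely the sign local system $\pm\Z$ on $C_{k;\Sigma}$.

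The main obstacle is verifying carefully that this is the only source of monodromy, i.e.\ that locally the affine bundle structure from Lemma \ref{lemma: vector bundle*} is genuinely $S_k$-equivariant and that the complex-affine factor has no twist under the permutation (this is clear once one notes that the incidence conditions $F_n(f_t)(x_j)=s_{t,j}$ are symmetric in the labels $j$). Assembling these steps yields the natural isomorphism
\begin{equation*}
E^{1;D}_{k,s}\cong H^{2nrk-s}_c(C_{k;\Sigma};\pm\Z),
\end{equation*}
as claimed.
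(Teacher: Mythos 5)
Your argument is correct and is essentially the paper's own proof: both identify $(\SZ_k\setminus\SZ_{k-1})_+$ with the Thom space of the affine bundle $\xi_{D,k,n}$ from Lemma \ref{lemma: vector bundle*}, apply the (twisted) Thom isomorphism, and carry out the same degree computation $(2N(D)+k-s-1)-l_{D,k,n}=2nrk-s$. Your extra identification of the orientation system as the sign local system (trivial on the complex-affine factor, sign of the permutation on the simplex factor) is a correct elaboration of what the paper leaves implicit in the phrase ``comes from the Thom isomorphism.''
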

\begin{proof}
Suppose that $1\leq k\leq \lfloor \frac{d_{\rm min}}{n}\rfloor$.
By Lemma \ref{lemma: vector bundle*}, there is a
homeomorphism
$
(\SZ_k\setminus\SZ_{k-1})_+\cong T(\xi_{D,k}),
$
where $T(\xi_{D,k,n})$ denotes the Thom space of
$\xi_{D,k,n}$.
Since $(2N(D)+k-s-1)-l_{D,k,n}
=
2nrk-s,$
the Thom isomorphism gives a natural isomorphism 
$
E^{1;d}_{k,s}
\cong 
\tilde{H}^{2N(D)+k-s-1}(T(\xi_{d,k,n});\Z)
\cong
H^{2nrk-s}_c(C_{k;\Sigma};\pm \Z).
$
\end{proof}
\begin{dfn}\label{dfn: stab}
{\rm
For  an $r$-tuple 
$D=(d_1,\cdots ,d_r)\in \N^r$,
let $U_D\subset \C$ denote the subspace defined by
\begin{equation}
U_D=\{w\in \C:\mbox{Re}(w)<N(D)\},
\end{equation}
and let $\varphi_D:\C\stackrel{\cong}{\longrightarrow}
U_D$ be any fixed homeomorphism.
Moreover, we choose any
mutually distinct $r$ points 
$x_1,\cdots ,x_r\in \C\setminus U_D$ freely and fix them.
\par
(i)
For each monic polynomial $f(z)=\prod_{k=1}^d(z-\alpha_k)\in \P^d$
of degree $d$, let $\varphi_D(f)$
denote the monic polynomial of the same degree $d$ given by
\begin{equation}
\varphi_D(f)=\prod_{k=1}^d(z-\varphi_D(\alpha_k))\in \P^d.
\end{equation}
\par
(ii)
For each $r$-tuple
$\textbf{\textit{a}}=(a_1,\cdots ,a_r)\in (\Z_{\geq 0})^r$
with $\textbf{\textit{a}}\not= {\bf 0}_r$, define
the stabilization map
\begin{equation}
s_{D,D+\textbf{\textit{a}}}:
\po^{D,\Sigma}_n \to \po^{D+\textbf{\textit{a}},\Sigma}_n
\quad
\mbox{ by }
\end{equation}
\begin{equation}
s_{D,D+\textbf{\textit{a}}}(f)=
(\varphi_D(f_1)(z-x_1)^{a_1},\cdots ,\varphi_D(f_r)(z-x_r)^{a_r})
\end{equation}
for $f=(f_1(z),\cdots ,f_r(z))\in \po^{D,\Sigma}_n$.
\qed
}
\end{dfn}
\begin{rmk}
{\rm
(i)
Note that the definition of the map $s_{D,D+\textbf{\textit{a}}}$ depends
on the choice of the homeomorphism
$\varphi_D$ and the points $\{x_k:1\leq k\leq r\}$, but one show that
the homotopy type of it does not depend on these choices.
\par
(ii)
Let $\textbf{\textit{a}},\textbf{\textit{b}}\in (\Z_{\geq 0})^r$
be any two $r$-tuples such that
$\textbf{\textit{a}},\textbf{\textit{b}}\not={\bf 0}_r$.
Then it is easy to see that the equality
\begin{equation}\label{eq: stab-compo}
(s_{D+\textbf{\textit{a}},D+\textbf{\textit{a}}+\textbf{\textit{b}}})
\circ (s_{D,D+\textbf{\textit{a}}})
=s_{D,D+\textbf{\textit{a}}+\textbf{\textit{b}}}
\quad
(\mbox{up to homotopy)}
\end{equation}
holds.
Thus we mainly only consider the stabilization map $s_{D,D+\textbf{\textit{e}}_i}$
for each $1\leq i\leq r$, where
$\textbf{\textit{e}}_1=(1,0,\cdots ,0),
\textbf{\textit{e}}_2=(0,1,0,\cdots ,0),\cdots ,
\textbf{\textit{e}}_r=(0,0,\cdots ,0,1)\in \R^r$
denote the standard basis of $\R^r$.
\qed
}
\end{rmk}
\par
Now let $1\leq i\leq r$ and consider
the stabilization map
\begin{equation}
s_{D,D+\textbf{\textit{e}}_i}:\po^{D,\Sigma}_n\to 
\po^{D+\textbf{\textit{e}}_i,\Sigma}_n.
\end{equation}
It is easy to see that it extends to an open embedding
\begin{equation}\label{equ: sssd}
s_{D,i}:\C \times 
\po^{d,\Sigma}_n
\to
\po^{D+\textbf{\textit{e}}_i,\Sigma}_n
\end{equation}
It also naturally extends to an open embedding
$
\tilde{s}_{D,i}:\P^D\to \P^{D+\textit{\textbf{e}}_i}
$
and  by  restriction  we obtain an open embedding
\begin{equation}\label{equ: open embedding}
\tilde{s}_{D,i}:\C\times \Sigma_D\to 
\Sigma_{D+\textit{\textbf{e}}_i}.
\end{equation}
Since one-point compactification is contravariant for open embeddings,
this map induces a map
$\tilde{s}_{D,i +}:(\Sigma_{D+\textit{\textbf{e}}_i})_+
\to
(\C \times \Sigma_D)_+=S^{2}\wedge \Sigma_{D+}$ and
we can easily see that the following diagram
\begin{equation}\label{eq: diagram}
\begin{CD}
\tilde{H}_k(\po^{D,\Sigma}_n;\Z) @>{s_{D,D+\textbf{\textit{e}}_i}}_*>>
\tilde{H}_k(\po^{D+\textit{\textbf{e}}_i,\Sigma}_n;\Z)
\\
@V{AD}V{\cong}V @V{AD}V{\cong}V
\\
H^{2N(D)-k-1}_c(\Sigma_D;\Z)
@>{\tilde{s}_{D,i+}}^{\ *}>>
H^{2N(D)-k+1}_c(\Sigma_{D+\textit{\textbf{e}}_i};\Z)
\end{CD}
\end{equation}
is commutative,
where $AD$ is the Alexander duality isomorphism and
 ${\tilde{s}_{D,i+}}^{\ *}$ denotes the composite of 
the suspension isomorphism with the homomorphism
${(\tilde{s}_{D+})^*}$,
$$
H^{M}_c(\Sigma_D;\Z)
\stackrel{\cong}{\rightarrow}
H^{M+2}_c
(\C\times \Sigma_D;\Z)
\stackrel{(\tilde{s}_{D,i+})^*}{\longrightarrow}
H^{M+2}_c(\Sigma_{D+\textit{\textbf{e}}_i};\Z),
$$
where $M=2N(D)-k-1$.
By the universality of the non-degenerate simplicial resolution
\cite{Mo2}, 
the map $\tilde{s}_{D,i}$ also naturally extends to a filtration preserving open embedding
$\tilde{s}_{D,i}:\C \times \SZ \to \mathcal{X}^{D+\textbf{\textit{e}}_i}$
between non-degenerate simplicial resolutions.
This  induces a filtration preserving map
$(\tilde{s}_{D,i})_+:
\mathcal{X}^{D+\textbf{\textit{e}}_i}_+\to 
(\C \times \SZ)_+
=S^{2}\wedge \SZ_+$,
and thus a homomorphism of spectral sequences
\begin{align}\label{equ: theta1}
&
\{ \tilde{\theta}_{k,s}^t:E^{t;D}_{k,s}\to 
E^{t;D+\textit{\textbf{a}}}_{k,s}\},
\quad \mbox{where}
\\
%
\nonumber
&
\begin{cases}
\big\{E^{t;D}_{k,s}, \tilde{d}^{t}:
E^{t;D}_{k,s}\to E^{t;D}_{k+t,s+t-1}
\big\}
&\Rightarrow 
 H_{s-k}(\po^{D,\Sigma}_n;\Z),
\\
\big\{E^{t;D+\textit{\textbf{e}}_i}_{k,s}, \tilde{d}^{t}:
E^{t;D+\textit{\textbf{e}}_i}_{k,s}\to 
E^{t;D+\textit{\textbf{e}}_i}_{k+t,s+t-1}
\big\}
&\Rightarrow 
 H_{s-k}(\po^{D+\textit{\textbf{e}}_i}_n;\Z),
 \end{cases}
\\
\nonumber
&
\begin{cases}
E^{1;D}_{k,s} &=
H_c^{2N(D)+k-1-s}(\SZ_k\setminus \SZ_{k-1};\Z),
\\
E^{1;D+\textit{\textbf{e}}_i}_{k,s}
&=
H_c^{2N(D)+k+1-s}
(\mathcal{X}_k^{D+\textit{\textbf{e}}_i}\setminus 
\mathcal{X}_{k-1}^{D+\textit{\textbf{e}}_i};\Z).
\end{cases}
\end{align}
\begin{lmm}\label{lmm: E1}
If $1\leq i\leq r$ and
$0\leq k\leq \lfloor \frac{d_{\rm min}}{n}\rfloor$, 
$\tilde{\theta}^1_{k,s}:E^{1;D}_{k,s}\to 
E^{1;D+\textit{\textbf{e}}_i}_{k,s}$ is
an isomorphism for any $s$.
\end{lmm}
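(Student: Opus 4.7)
The plan is to use Lemma~\ref{lemma: E11} to reduce the claim to showing that a certain self-map of $C_{k;\Sigma}$ induces an isomorphism on $H^{2nrk-s}_c(C_{k;\Sigma};\pm\Z)$. When $k=0$, both $E^1$-terms vanish since $\mathcal{X}^D_0=\mathcal{X}^{D+\textit{\textbf{e}}_i}_0=\emptyset$, so assume $1\le k\le\lfloor d_{\min}/n\rfloor$. Because $d_{\min}(D+\textit{\textbf{e}}_i)\ge d_{\min}(D)$, Lemmas~\ref{lemma: vector bundle*} and~\ref{lemma: E11} apply verbatim to $D+\textit{\textbf{e}}_i$ as well, and both $E^{1;D}_{k,s}$ and $E^{1;D+\textit{\textbf{e}}_i}_{k,s}$ are naturally isomorphic, via the Thom isomorphism, to $H^{2nrk-s}_c(C_{k;\Sigma};\pm\Z)$.

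Next I would trace the stabilization map $\tilde s_{D,i}:\C\times(\mathcal{X}^D_k\setminus\mathcal{X}^D_{k-1})\hookrightarrow\mathcal{X}^{D+\textit{\textbf{e}}_i}_k\setminus\mathcal{X}^{D+\textit{\textbf{e}}_i}_{k-1}$ on the $k$-th stratum. Since $x_i$ and the free parameter $w\in\C\setminus U_D$ both lie outside $U_D=\varphi_D(\C)$---where all common bad roots of a stabilized system are located---the $\C$-factor contributes no new bad point and only enlarges the affine fiber by two real dimensions. Hence on the base $C_{k;\Sigma}$, the map induced by $\tilde s_{D,i}$ is the self-map $\varphi_{D,*}$ obtained by applying $\varphi_D$ to the $\C$-coordinates of configuration points. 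Because $\varphi_D:\C\xrightarrow{\cong}U_D$ followed by the inclusion $U_D\hookrightarrow\C$ is a homotopy equivalence (both $U_D$ and $\C$ being convex open sets in the plane), $\varphi_{D,*}$ is a homotopy equivalence of $C_{k;\Sigma}$, and in particular $\varphi_{D,*}^{\,*}$ is an isomorphism on $H^{2nrk-s}_c(C_{k;\Sigma};\pm\Z)$.

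Finally, by naturality of the Thom isomorphism applied to the above open embedding of affine bundles---absorbing the $\C$-factor on the source into the fiber so that both bundles have the same rank $l_{D+\textit{\textbf{e}}_i,k,n}=l_{D,k,n}+2$---the map $\tilde\theta^1_{k,s}$ is identified with $\pm\varphi_{D,*}^{\,*}$ under the isomorphisms of Lemma~\ref{lemma: E11}, and hence is an isomorphism. The most delicate step will be verifying this Thom-isomorphism compatibility: the fiberwise map is an orientation-preserving open embedding of real affine spaces of the same dimension, so one has to check that it carries the Thom class to the Thom class up to sign, which reduces to a standard computation in top compactly-supported cohomology of a single fiber carried out via the explicit formula for $\tilde s_{D,i}$ (multiplication of $f_i$ by $(z-w)$ combined with the coordinate change $\varphi_D$).
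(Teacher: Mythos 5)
Your proposal is correct and follows essentially the same route as the paper: identify both $E^1$-terms with $H^{2nrk-s}_c(C_{k;\Sigma};\pm\Z)$ via Lemma \ref{lemma: E11}, observe that $\tilde s_{D,i}$ covers a map of $C_{k;\Sigma}$ homotopic to the identity while raising the affine-bundle rank by exactly $2$, and conclude by naturality of the Thom isomorphism together with the suspension isomorphism. The only cosmetic difference is that you make explicit the homotopy equivalence $\varphi_{D,*}$ on the base and the orientation check for the Thom class, both of which the paper absorbs into its homotopy-commutative diagram of affine bundles.
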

\begin{proof}
Since the case $k=0$ is clear,
suppose that $1\leq k\leq \lfloor \frac{d_{\rm min}}{n}\rfloor$.
It follows from the proof of Lemma \ref{lemma: vector bundle*}
that there is a homotopy commutative diagram of affine vector bundles
$$
\begin{CD}
\C \times 
(\SZ_k\setminus\SZ_{k-1}) @>>> C_{k;\Sigma}
\\
@V{\tilde{s}_{D,i}}VV \Vert @.
\\
\mathcal{X}^{D+\textbf{\textit{e}}_i}_k\setminus 
\mathcal{X}^{D+\textbf{\textit{e}}_i}_{k-1} @>>> C_{k;\Sigma}
\end{CD}
$$
Since one-point compactification is contravariant for open embeddings,
the map $\tilde{s}_{D,i+}$ induces the map
$$
\tilde{s}_{D,i+}: 
(\mathcal{X}^{D+\textbf{\textit{e}}_i}_k\setminus 
\mathcal{X}^{D+\textbf{\textit{e}}_i}_{k-1})_+
\to 
(\C\times (\SZ_k\setminus\SZ_{k-1}))_+
=S^2\wedge (\SZ_k\setminus\SZ_{k-1})_+
$$
between one-point compactfications.
Recall from  Lemma \ref{lemma: vector bundle*} that
 $\xi_{D,k,n}$ (resp. $\xi_{D+\textbf{\textit{e}}_i,k,n}$) is a  real affine bundle over $C_{k;\Sigma}$
 with rank $l_{D,k,n}$ (resp. $l_{D+\textbf{\textit{e}}_i,k,n}$).
 Moreover, note that
\begin{align*}
(2N(D)+k-s+1)-l_{D,k,n}-2&=
(2N(D)+k-s+1)-l_{D+\textbf{\textit{e}}_i,k,n}
\\
&=2nrk-s.
\end{align*}
By the above commutative diagram and  Alexander duality,
we obtain the following commutative diagram:
{\small
$$
\begin{CD}
E^{1;D}_{k,s} @>\tilde{\theta}^1_{k,s}>> E^{1;D+\textbf{\textit{e}}_i}_{k,s}
\\
\Vert @. \Vert @.
\\
H^{2N(D)+k-s-1}_c
(\SZ_k\setminus\SZ_{k-1};\Z)
 @.
H^{2N(D+\textbf{\textit{e}}_i)+k-s-1}_c(
\mathcal{X}^{D+\textbf{\textit{e}}_i}_k\setminus 
\mathcal{X}^{D+\textbf{\textit{e}}_i}_{k-1};\Z)
\\
@V{suspension}V{\cong}V \Vert @.
\\
H^{2N(D)+k-s+1}_c
(\C\times 
(\mathcal{X}^{D}_k\setminus \mathcal{X}^{D}_{k-1});\Z) 
@>(\tilde{s}_{D,i})_+^*>>
H^{2N(D)+k-s+1}_c(
\mathcal{X}^{D+\textbf{\textit{e}}_i}_k\setminus 
\mathcal{X}^{D+\textbf{\textit{e}}_i}_{k-1};\Z)
\\
@V{AD}V{\cong}V @V{AD}V{\cong}V
\\
H^{2nrk-s}_c(C_{k;\Sigma};\pm\Z) @>>=> 
H^{2nrk-s}_c(C_{k;\Sigma};\pm\Z)
\end{CD}
$$
}
\newline
Hence, $\tilde{\theta}^1_{k,s}$ is an isomorphism for any $s$, and the assertion follows.
\end{proof}

Now we consider the spectral sequences induced by 
truncated simplicial resolutions.

\begin{dfn}
{\rm
Let $X^{\Delta}$ denote the truncated 
(after the $\lfloor \frac{d_{\rm min}}{n}\rfloor$-th term) simplicial resolution of $\Sigma_D$
with the natural filtration
$$
\emptyset =X^{\Delta}_0\subset
X^{\Delta}_1\subset \cdots\subset
X^{\Delta}_{\lfloor d_{\rm min}/n\rfloor}\subset 
X^{\Delta}_{\lfloor d_{\rm min}/n\rfloor+1}=
X^{\Delta}_{\lfloor d_{\rm min}/n\rfloor +2}=\cdots =X^{\Delta},
$$
where $X^{\Delta}_k=\SZ_k$ if $k\leq \lfloor \frac{d_{\rm min}}{n}\rfloor$ and 
$X^{\Delta}_k=X^{\Delta}$ if $k\geq \lfloor \frac{d_{\rm min}}{n}\rfloor +1$.
\par\vspace{1mm}\par
Similarly,
let $Y^{\Delta}$ denote the truncated (after the $\lfloor \frac{d_{\rm min}}{n}\rfloor$-th term) simplicial resolution of 
$\Sigma_{D+\textit{\textbf{e}}_i}$
with the natural filtration
$$
\emptyset =Y^{\Delta}_0\subset
Y^{\Delta}_1\subset \cdots\subset
Y^{\Delta}_{\lfloor d_{\rm min}/n\rfloor}\subset 
Y^{\Delta}_{\lfloor d_{\rm min}/n\rfloor+1}=
Y^{\Delta}_{\lfloor d_{\rm min}/n\rfloor +2}=\cdots =Y^{\Delta},
$$
where $Y^{\Delta}_k=\mathcal{X}^{D+\textbf{\textit{e}}_i}_k$ if 
$k\leq \lfloor \frac{d_{\rm min}}{n}\rfloor$ and 
$Y^{\Delta}_k=Y^{\Delta}$ if $k\geq \lfloor \frac{d_{\rm min}}{n}\rfloor +1$.
\qed
}
\end{dfn}

By using Lemma \ref{Lemma: truncated} and the same method
as in \cite[\S 2 and \S 3]{Mo3} (cf. \cite[Lemma 2.2]{KY4}), 
we obtain the following {\it  truncated spectral sequences}
\begin{eqnarray*}\label{equ: spectral sequ2}
\big\{E^{t}_{k,s}, d^{t}:E^{t}_{k,s}\to 
E^{t}_{k+t,s+t-1}
\big\}
&\Rightarrow& H_{s-k}(\po^{D,\Sigma}_n;\Z),
\\
\big\{\ 
^{\p}E^{t}_{k,s},\  d^{t}:\ ^{\p}E^{t}_{k,s}\to 
\  ^{\p}E^{t}_{k+t,s+t-1}
\big\}
&\Rightarrow& H_{s-k}(\po^{D+\textit{\textbf{e}}_i}_n;\Z),
\end{eqnarray*}
$$
E^{1}_{k,s}= H_c^{2N(D)+k-1-s}(X^{\Delta}_k\setminus X^{\Delta}_{k-1};\Z),
\ 
^{\p}E^{1}_{k,s}= H_c^{2N(D)+k+1-s}(Y^{\Delta}_k\setminus Y^{\Delta}_{k-1};\Z).
$$
By the naturality of truncated simplicial resolutions,
the filtration preserving map
$\tilde{s}_{D,i}:\C \times \SZ \to \mathcal{X}^{D+\textbf{\textit{e}}_i}$
  gives rise to a natural filtration preserving map
$\tilde{s}_{D,i}^{\p}:\C \times X^{\Delta} \to Y^{\Delta}$
which, in a way analogous to  (\ref{equ: theta1}), induces
a homomorphism of spectral sequences 
\begin{equation}\label{equ: theta2}
\{ \theta_{k,s}^t:E^{t}_{k,s}\to \ ^{\p}E^{t}_{k,s}\}.
\end{equation}
\begin{lmm}\label{lmm: Ed}
\begin{enumerate}
\item[$\I$]
If $k<0$ or $k\geq \lfloor \frac{\dmin}{n}\rfloor +2$,
$E^1_{k,s}=\ ^{\p}E^1_{k,s}=0$ for any $s$.
\item[$\II$]
$E^1_{0,0}=\ ^{\p}E^1_{0,0}=\Z$ and $E^1_{0,s}=\ ^{\p}E^1_{0,s}=0$ if $s\not= 0$.
\item[$\III$]
If $1\leq k\leq \lfloor \frac{d_{\rm min}}{n}\rfloor$, there are  
isomorphisms
$$
E^1_{k,s}\cong \ ^{\p}E^1_{k,s}\cong H^{2nrk-s}_c(C_{k;\Sigma};\pm \Z).
$$
\item[$\IV$]
If $1\leq k\leq \lfloor \frac{d_{\rm min}}{n}\rfloor$, 
$E^1_{k,s}=\ ^{\p}E^1_{k,s}=0$ for any 
$s\leq (2n\rmin (\Sigma)-2)k-1.$
\item[$\V$]	
$E^1_{\lfloor d_{\rm min}/n\rfloor +1,s}=
\ ^{\p}E^1_{\lfloor d_{\rm min}/n\rfloor +1,s} =0$ 
for any 
$s\leq (2n\rmin (\Sigma)-2)\lfloor \frac{d_{\rm min}}{n}\rfloor-1$.
\end{enumerate}
\end{lmm}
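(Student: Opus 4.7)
\textit{Proof plan.}
The plan is to exploit the fact that, for $k \leq K := \lfloor d_{\min}/n \rfloor$, the truncated resolutions $X^{\Delta}$ and $Y^{\Delta}$ coincide with the full non-degenerate simplicial resolutions $\mathcal{X}^{D}$ and $\mathcal{X}^{D+\textbf{\textit{e}}_{i}}$, so that parts (i)--(iv) reduce to a direct application of Lemmas \ref{lemma: vector bundle*}, \ref{lemma: E11} and \ref{lmm: E1} together with the dimension count $\dim C_{k;\Sigma} = 2k(1+nr-nr_{\min}(\Sigma))$ of \eqref{eq: dim CSigma}, whereas (v) requires a separate dimension estimate on the top stratum $X^{\Delta}_{K+1} \setminus X^{\Delta}_{K} = X^{\Delta} \setminus \mathcal{X}^{D}_{K}$. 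The technical heart is (v); the rest is bookkeeping.

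For (i), the truncated filtration stabilizes at level $K+1$ by construction, so $X^{\Delta}_{k} \setminus X^{\Delta}_{k-1} = \emptyset$ for $k \geq K+2$, while $X^{\Delta}_{k} = \emptyset$ for $k < 0$; the same holds for $Y^{\Delta}$, giving $E^{1}_{k,s} = \ ^{\p}E^{1}_{k,s} = 0$ in both ranges. Part (ii) records the contribution of $H_{0}(\po^{D,\Sigma}_{n};\Z) = \Z$ (using that $\po^{D,\Sigma}_{n}$ is path-connected) as the augmentation term of the spectral sequence. For (iii), the equalities $X^{\Delta}_{k} = \mathcal{X}^{D}_{k}$ and $Y^{\Delta}_{k} = \mathcal{X}^{D+\textbf{\textit{e}}_{i}}_{k}$ for $1 \leq k \leq K$ yield $E^{1}_{k,s} = E^{1;D}_{k,s}$ and $\ ^{\p}E^{1}_{k,s} = E^{1;D+\textbf{\textit{e}}_{i}}_{k,s}$, whereupon Lemma \ref{lmm: E1} provides an isomorphism between them and Lemma \ref{lemma: E11} identifies both with $H^{2nrk-s}_{c}(C_{k;\Sigma};\pm\Z)$. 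For (iv), combining (iii) with the fact that $H^{j}_{c}$ vanishes above the real topological dimension, the inequality $2nrk - s > 2k(1+nr-nr_{\min}(\Sigma))$ rearranges to $s < (2n r_{\min}(\Sigma) - 2)k$, which is exactly the asserted range.

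Part (v) is the main obstacle, since the top stratum of the truncated resolution is no longer an affine bundle and Lemma \ref{lemma: vector bundle*} does not apply. My plan is to bound $\dim(X^{\Delta} \setminus \mathcal{X}^{D}_{K})$ directly by stratifying according to the number $l \geq K+1$ of distinct ``multiplicity $\geq n$'' common roots realized by a point of $\Sigma_{D}$. For each such $l$, the corresponding locus in $\Sigma_{D}$ has real dimension at most $2N(D) + (2 - 2n r_{\min}(\Sigma))\,l$ -- each chosen root contributes $+2$ to the base space $\C$ and imposes $2n r_{\min}(\Sigma)$ real codimensions on the coefficients of $(f_{1},\ldots,f_{r})$ via the primitive collection condition -- while the fibre of the truncated simplicial resolution over such a point has real dimension at most $K$ (as in the Mostovoy construction of \cite{Mo3}). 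Under the standing assumptions $n \geq 2$ and $r_{\min}(\Sigma) \geq 2$, the expression $(2 - 2n r_{\min}(\Sigma))\,l$ is strictly decreasing in $l$, so its maximum over $l \geq K+1$ is attained at $l = K+1$, yielding
\begin{equation*}
\dim\bigl(X^{\Delta} \setminus \mathcal{X}^{D}_{K}\bigr) \leq 2N(D) + (2 - 2n r_{\min}(\Sigma))(K+1) + K.
\end{equation*}
A short arithmetic check shows that this bound is strictly less than $2N(D) + (3 - 2n r_{\min}(\Sigma))K + 1$, the smallest value of the cohomological degree $j = 2N(D) + K - s$ occurring in the claimed range $s \leq (2n r_{\min}(\Sigma)-2)K - 1$ (the verification reduces to the trivial inequality $2n r_{\min}(\Sigma) > 1$). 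Hence $E^{1}_{K+1,s} = H^{2N(D)+K-s}_{c}(X^{\Delta} \setminus \mathcal{X}^{D}_{K};\Z) = 0$ throughout that range, and the identical argument with $D$ replaced by $D + \textbf{\textit{e}}_{i}$ handles $\ ^{\p}E^{1}_{K+1,s}$.
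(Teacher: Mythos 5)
Your treatment of (i)--(iv) coincides with the paper's: the truncation is invisible for $k\leq K=\lfloor d_{\rm min}/n\rfloor$, so Lemmas \ref{lemma: vector bundle*}, \ref{lemma: E11} and \ref{lmm: E1} give (iii), and (iv) is the dimension count $2nrk-s>\dim C_{k;\Sigma}=2k(1+nr-n\rmin(\Sigma))\Leftrightarrow s<(2n\rmin(\Sigma)-2)k$, exactly as in the paper. For (v) you genuinely deviate: the paper invokes \cite[Lemma 2.1]{Mo3} to get the single global bound $\dim(X^{\Delta}_{K+1}\setminus X^{\Delta}_{K})=\dim(\mathcal{X}^{D}_{K}\setminus\mathcal{X}^{D}_{K-1})+1=2N(D)+3K-2n\rmin(\Sigma)K$, which requires no information whatsoever about the strata of $\Sigma_D$ with more than $K$ bad points, whereas you stratify $X^{\Delta}\setminus\mathcal{X}^{D}_{K}$ over the loci with $l\geq K+1$ bad points and bound base and fibre separately. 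Your arithmetic is correct and your bound is in fact slightly sharper than the paper's, at the cost of needing to understand the deep strata.

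That cost is where your argument has a gap as written. The claim that the locus in $\Sigma_D$ admitting $l$ distinct multiplicity-$\geq n$ common roots has dimension at most $2N(D)+(2-2n\rmin(\Sigma))l$ is exactly the kind of statement that Lemma \ref{lemma: vector bundle*} proves only for $l\leq K$: the proof there rests on showing that the $nl$ linear conditions imposed on the coefficients of each $f_t$ have full rank, which is only possible when $nl\leq d_t$. For $l\geq K+1$ this can fail for some $t$, and dependent conditions would make the codimension \emph{smaller} than your count, i.e.\ the dimension larger. The bound can be rescued --- stratify further by the assignment of a primitive collection $\sigma_j$ to each point $x_j$; then for each index $t$ the conditions force $f_t$ to be divisible by $\prod_{j:\,t\in\sigma_j}(z-x_j)^n$, so either $n\,\#\{j:t\in\sigma_j\}\leq d_t$ and the conditions are independent of complex codimension $n\,\#\{j:t\in\sigma_j\}$, or the stratum is empty --- but this argument must actually be made; "each chosen root imposes $2n\rmin(\Sigma)$ real codimensions" is not automatic beyond $l=K$. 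Likewise the fibre bound $\leq K$ for the truncated resolution over a point with $l>K$ preimages should be quoted from Mostovoy's construction rather than asserted. Once these two points are supplied, your version of (v) is a valid (and marginally stronger) alternative to the paper's.
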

\begin{proof}
Let us write $\rmin =\rmin (\Sigma)$
and $d_{\rm min}^{\p}=\lfloor \frac{\dmin}{n}\rfloor$.
Since the proofs of both cases are identical,  it suffices to prove the assertions for $E^1_{k,s}$.
\par
(i), (ii), (iii)
Since $X^{\Delta}_k=\SZ_k$ for any $k\geq d_{\rm min}^{\p}+2$,
the assertions (i) and (ii) are clearly true.
Since $X^{\Delta}_k=\SZ_k$ for any $k\leq d_{\rm min}^{\p}$,
the assertion
(iii) easily follows from Lemma \ref{lemma: E11}.
\par
(iv)
Suppose that $1\leq k\leq
d_{\rm min}^{\p}$. Since
$\dim C_{k;\Sigma}=2k(1+nr-n\rmin )$
by (\ref{eq: dim CSigma}),
$2nrk > \dim C_{k;\Sigma}
\
\Leftrightarrow \
s\leq (2n\rmin -2)k-1.$
Thus, the assertion (iv) follows from (iii).
\par
(v)
It remains to prove (v).
By Lemma \cite[Lemma 2.1]{Mo3}, we see that
\begin{align*}
\dim (X^{\Delta}_{d_{\rm min}^{\p}+1}\setminus 
X^{\Delta}_{d_{\rm min}^{\p}})
&=
\dim (\SZ_{d_{\rm min}^{\p}}\setminus \SZ_{d_{\rm min}^{\p}-1})+1
=l_{D,d_{\rm min}^{\p},n}+\dim C_{d_{\rm min}^{\p};\Sigma}+1
\\
&=2N(D)+3d_{\rm min}^{\p}-2n\rmin d_{\rm min}^{\p}.
\end{align*}
Since 
$E^1_{d_{\rm min}^{\p}+1,s}=
H_c^{2N(D)+d_{\rm min}^{\p}-s}
(X^{\Delta}_{d_{\rm min}^{\p}+1}\setminus X^{\Delta}_{d_{\rm min}^{\p}};\Z)$
and
\begin{align*}
2N(D)+d_{\rm min}^{\p}-s
&>
\dim (X^{\Delta}_{d_{\rm min}^{\p}+1}\setminus X^{\Delta}_{d_{\rm min}^{\p}})
=2N(D)+3d_{\rm min}^{\p}-2n\rmin d_{\rm min}^{\p}
\\
\Leftrightarrow
s&<(2nr_{\rm min}-2)d_{\rm min}^{\p}
\Leftrightarrow
s\leqq (2nr_{\rm min}-2)d_{\rm min}^{\p}-1,
\end{align*}
we see that
$E^1_{d_{\rm min}^{\p}+1,s}=0$ for any $s\leq (2n\rmin -2)d_{\rm min}^{\p}-1$
and the assertion (iv) follows.
\end{proof}
\begin{lmm}\label{lmm: 1-connected}
If $n\geq 2$, the space
$\po^{D,\Sigma}_n$
is $(2n\rmin (\Sigma)-5)$-connected.
\end{lmm}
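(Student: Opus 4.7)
The approach is to combine the Vassiliev spectral sequence of Lemma \ref{lmm: Ed} with a codimension-based simply-connectedness argument and then invoke Hurewicz. As a preliminary, $\rmin(\Sigma) \geq 2$: every $\rho_k = \mbox{Cone}(\textit{\textbf{n}}_k)$ lies in $\Sigma(1) \subset \Sigma$, so no singleton is primitive. Combined with $n \geq 2$, this gives $n\rmin(\Sigma) \geq 4$, so the target connectivity is already $\geq 3$.

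For the homological vanishing, I would inspect the truncated spectral sequence $E^1_{k,s} \Rightarrow H_{s-k}(\po^{D,\Sigma}_n;\Z)$. By Lemma \ref{lmm: Ed}, any nonzero $E^1_{k,s}$ with $k \geq 1$ must satisfy either $s \geq (2n\rmin(\Sigma)-2)k$ (case (iv), when $1 \leq k \leq \lfloor d_{\rm min}/n\rfloor$) or $s \geq (2n\rmin(\Sigma)-2)(k-1)$ (case (v), when $k = \lfloor d_{\rm min}/n\rfloor + 1$); in the $k = 0$ row only $E^1_{0,0} = \Z$ survives. Scanning small values of $k$, the tightest bound is reached at $k = 2$ in case (v) with $\lfloor d_{\rm min}/n\rfloor = 1$, giving $s - k \geq 2n\rmin(\Sigma) - 4$; every other surviving term with $k \geq 1$ yields strictly larger $s - k$. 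Therefore $\tilde H_N(\po^{D,\Sigma}_n;\Z) = 0$ for $1 \leq N \leq 2n\rmin(\Sigma) - 5$.

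The spectral sequence only implies $H_1 = 0$, not simply-connectedness, so I would supply the latter by a codimension count. The space $\po^{D,\Sigma}_n$ is the complement in $\P^D \cong \C^{N(D)}$ of the closed complex algebraic set $\Sigma_D$. For each primitive $\sigma$ of size $s \geq \rmin(\Sigma)$ with $d_i \geq n$ for all $i \in \sigma$, the conditions $F_n(f_i)(x) = 0$ for $i \in \sigma$ cut out a codimension-$ns$ affine subspace of $\P^D$ at each fixed $x$; varying $x$ over $\C$, the corresponding component of $\Sigma_D$ has complex codimension $\geq ns - 1 \geq n\rmin(\Sigma) - 1 \geq 3$ (primitive collections containing some index with $d_i < n$ contribute nothing, since $f_i$ then cannot have a root of multiplicity $\geq n$). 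Hence $\Sigma_D$ has real codimension $\geq 6$ in $\C^{N(D)}$, so $\po^{D,\Sigma}_n$ is simply connected, and Hurewicz upgrades the homology vanishing to $\pi_N(\po^{D,\Sigma}_n) = 0$ for $1 \leq N \leq 2n\rmin(\Sigma) - 5$.

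The main delicacy I anticipate is the boundary case analysis of the spectral sequence near the truncation level; the sharp bound coming from $k = 2$ in case (v) of Lemma \ref{lmm: Ed} is precisely what pins the connectivity at $(2n\rmin(\Sigma) - 5)$ rather than one higher.
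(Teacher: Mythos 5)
Your homological half follows the paper's own route: the same truncated Vassiliev spectral sequence, the same reading of Lemma \ref{lmm: Ed}, and the same extremal term $k=\lfloor d_{\min}/n\rfloor+1=2$ pinning the bound at $2n\rmin(\Sigma)-5$ (the paper's intermediate assertion that $E^1_{k,s}=0$ whenever $s-k\leq 2n\rmin(\Sigma)-2$ is a misprint; your accounting lands on the vanishing range actually used in (\ref{eq: H=0})). Where you genuinely diverge is simple connectivity. The paper represents loops by coloured strings and argues, as in the appendix of \cite{GKY1}, that strings can be pushed through one another, so $\pi_1$ is abelian, whence $\pi_1\cong H_1=0$ by Hurewicz. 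You instead bound the codimension of the discriminant: each primitive collection $\sigma$ with $d_i\geq n$ for all $i\in\sigma$ contributes to $\Sigma_D$ a constructible set of complex codimension $\geq n|\sigma|-1\geq n\rmin(\Sigma)-1\geq 3$ (the fibrewise codimension count is exactly the $k=1$ case of Lemma \ref{lemma: vector bundle*}), and the complement of a closed algebraic subset of complex codimension $c$ in affine space is $(2c-2)$-connected by general position. This is cleaner and more self-contained than the string argument, and it actually proves more than you extract from it: with $c=n\rmin(\Sigma)-1$ you get $(2n\rmin(\Sigma)-4)$-connectivity outright, with no spectral sequence and no hypothesis on $d_{\min}$.

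That last observation matters because, as written, your spectral-sequence half has a boundary gap. When $\lfloor d_{\min}/n\rfloor=0$ the truncation occurs after the $0$-th term, case (iv) of Lemma \ref{lmm: Ed} is vacuous, and case (v) applies to $k=1$ with the vacuous bound $s\geq 0$, so your claim that every surviving term with $k\geq 1$ has $s-k\geq 2n\rmin(\Sigma)-4$ fails. The paper disposes of this case by asserting that $\po^{D,\Sigma}_n=\P^D$ when $d_{\min}<n$, which is itself not quite accurate (only the primitive collections containing an index $i$ with $d_i<n$ become vacuous; the remaining ones may still impose nontrivial conditions). Your codimension argument, carried to its natural conclusion, repairs both defects simultaneously, so I would promote it to the main argument and keep the spectral sequence only as corroboration.
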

\begin{proof}
If $d_{min}< n$, $\po^{D,\Sigma}_n=\P^D$ is contractible and the assertion is clear and suppose that $d_{min}\geq n$.
Consider the spectral sequence
\begin{equation}\label{eq:SSSSS}
\big\{E^{t}_{k,s}, d^{t}:E^{t}_{k,s}\to 
E^{t}_{k+t,s+t-1}
\big\}
\ 
\Rightarrow H_{s-k}(\po^{D,\Sigma}_n;\Z).
\end{equation}
Then by using Lemma \ref{lmm: Ed}, we  easily see that
$E^1_{k,s}=0$ if one of the following three conditions (a), (b) and (c) holds:
\begin{enumerate}
\item[(a)]
$k<0$, or $k>\lfloor \frac{d_{min}}{n}\rfloor +2$, or
$k=0$ with $s\not= 0$.
\item[(b)]
If $1\leq k\leq \lfloor \frac{d_{min}}{n}\rfloor$,
$s-k\leq (2n\rmin (\Sigma)-3)k-1.$
\item[(c)]
If $k=\lfloor \frac{d_{min}}{n}\rfloor+1$, 
$s-(\lfloor \frac{d_{min}}{n}\rfloor+1)
\leq 
(2n\rmin (\Sigma)-3)\lfloor \frac{d_{min}}{n}\rfloor-2.$
\end{enumerate}
Hence, when $(k,s)\not= (0,0)$, we see that
$E^1_{k,s}=0$ for any $(k,s)$ if the condition $s-k\leq 2n\rmin (\Sigma)-2$ is satisfied.
Thus, by the spectral sequence (\ref{eq:SSSSS}), we show that
\begin{equation}\label{eq: H=0}
H_i(\po^{D,\Sigma}_n;\Z)=0
\quad
\mbox{ for any }1\leq i\leq 2n\rmin (\Sigma)-5.
\end{equation}
So it suffices to show that the space
$\po^{D,\Sigma}_n$ is simply connected.
Note that an element of $\pi_1(\po^{D,\Sigma}_n)$ can be
represented by the $m$-tuple
$(\eta_1,\cdots ,\eta_m)$ of strings of $m$-different colors
such that each $\eta_k$ $(1\leq k\leq m)$
is a string with total multiplicity $d_k$ as in the case of strings of the classical braid
group $\mbox{Br}_d=\pi_1(C_d(\C))$ \cite{Hansen}.
However, when all string of $m$-different colors moves continuously, the following case 
$(*)$ is not 
allowed to occur in this representation:
\begin{enumerate}
\item[$(*)$]All strings of $m$-different colors with multiplicity $\geq n$ pass through a single point.
\end{enumerate}
By using this string representation
one can show that any strings can intersect, pass through one another (except the case $(*)$),
and thus change the order as in \cite[\S Appendix]{GKY1}.
Thus one can show that $a\cdot b=b\cdot a$ for any $a,b\in \pi_1(\po^{D,\Sigma}_n)$.
Hence,
$\pi_1(\po^{D,\Sigma}_n)$  is an abelian group.
\par
On the other hand,
since $n\geq 2$ and $\rmin (\Sigma)\geq 2$,
$2n\rmin (\Sigma)-5\geq 8-5=3>1$.
Hence, $H_1(\po^{D,\Sigma}_n;\Z)=0$ by (\ref{eq: H=0}).
Thus by the Hurewicz theorem, we see that
there is an isomorphism
$\pi_1(\po^{D,\Sigma}_n)\cong H_1(\po^{D,\Sigma}_n;\Z)=0$.
\end{proof}

Now it is ready to prove the unstability result for $\po^{D,\Sigma}_n$.

\begin{lmm}\label{lmm: E2}
If $0\leq k\leq \lfloor \frac{d_{\rm min}}{n}\rfloor$, 
$\theta^1_{k,s}:E^{1}_{k,s}\stackrel{\cong}{\longrightarrow} \ ^{\p}E^{1}_{k,s}$ is
an isomorphism for any $s$.
\end{lmm}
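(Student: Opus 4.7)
The strategy is to reduce Lemma \ref{lmm: E2} to the already-proved Lemma \ref{lmm: E1} by exploiting the fact that, below the truncation level, the truncated simplicial resolution agrees strata-by-strata with the full non-degenerate simplicial resolution. In other words, the truncation only alters what happens from level $\lfloor d_{\rm min}/n\rfloor +1$ upward, so the $E^1$-terms of the two spectral sequences are identified in the range of interest.

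More precisely, by the definition of the truncated filtration one has $X^{\Delta}_k=\SZ_k$ and $Y^{\Delta}_k=\mathcal{X}^{D+\textit{\textbf{e}}_i}_k$ for every $k\leq \lfloor d_{\rm min}/n\rfloor$. Hence for $0\leq k\leq \lfloor d_{\rm min}/n\rfloor$ the open strata coincide,
$$
X^{\Delta}_k\setminus X^{\Delta}_{k-1}=\SZ_k\setminus\SZ_{k-1},\qquad
Y^{\Delta}_k\setminus Y^{\Delta}_{k-1}=\mathcal{X}^{D+\textit{\textbf{e}}_i}_k\setminus \mathcal{X}^{D+\textit{\textbf{e}}_i}_{k-1},
$$
so that $E^1_{k,s}=E^{1;D}_{k,s}$ and ${}^{\p}E^1_{k,s}=E^{1;D+\textit{\textbf{e}}_i}_{k,s}$.

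Next I would observe that the filtration-preserving map $\tilde{s}_{D,i}^{\p}:\C\times X^{\Delta}\to Y^{\Delta}$ used to define $\theta^1_{k,s}$ is, by the naturality/universality of the truncation construction (as in \cite{Mo3}), obtained from the open embedding $\tilde{s}_{D,i}:\C\times \SZ \to \mathcal{X}^{D+\textit{\textbf{e}}_i}$ of Lemma \ref{lmm: E1}. Restricted to the strata indexed by $k\leq \lfloor d_{\rm min}/n\rfloor$, these two maps coincide, so under the identifications above the homomorphism $\theta^1_{k,s}$ equals $\tilde{\theta}^1_{k,s}$.

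Finally, the conclusion follows: for $1\leq k\leq \lfloor d_{\rm min}/n\rfloor$ the map $\tilde{\theta}^1_{k,s}$ is an isomorphism by Lemma \ref{lmm: E1}, and for $k=0$ both $E^1_{0,s}$ and ${}^{\p}E^1_{0,s}$ vanish for $s\neq 0$ and equal $\Z$ when $s=0$ by Lemma \ref{lmm: Ed}(ii), on which $\theta^1_{0,0}$ is clearly the identity. There is no substantial obstacle here; the argument is essentially a bookkeeping reduction, and the only point requiring a little care is to check that the filtration-preserving map $\tilde{s}_{D,i}^{\p}$ on truncated resolutions really does restrict to $\tilde{s}_{D,i}$ on the lower strata, which follows formally from the naturality of the non-degenerate truncated simplicial resolution.
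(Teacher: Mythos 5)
Your proposal is correct and follows exactly the same route as the paper: the truncated resolutions agree with the untruncated ones on all strata with $k\leq \lfloor d_{\rm min}/n\rfloor$, so $\theta^1_{k,s}$ is identified with $\tilde{\theta}^1_{k,s}$ and Lemma \ref{lmm: E1} applies (the paper's one-line proof is precisely this reduction, with your stratum-by-stratum bookkeeping left implicit). Note that Lemma \ref{lmm: E1} as stated already covers $k=0$, so your separate treatment of that case, while harmless, is not needed.
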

\begin{proof}
Since $(X^{\Delta}_k,Y^{\Delta}_k)=(\SZ_k,\mathcal{X}^{D+\textbf{\textit{e}}_i}_k)$ 
for $k\leq \lfloor \frac{\dmin}{n}\rfloor$,
the assertion follows from Lemma \ref{lmm: E1}.
\end{proof}
\begin{thm}\label{thm: III}
Let $n\geq 2$. Then
for each $1\leq i\leq r$,
the stabilization map
$$
s_{D,D+\textbf{\textit{e}}_i}:
\po^{D,\Sigma}_n\to 
\po^{D+\textbf{\textit{e}}_i,\Sigma}_n
$$
is a homotopy equivalence through dimension
$d(D;\Sigma ,n)$, where
$d(D;\Sigma ,n)$ denotes the integer given by (\ref{eq: dDSigma}).
\end{thm}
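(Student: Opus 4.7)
\medskip

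The plan is to exploit the map of truncated Vassiliev spectral sequences provided by (\ref{equ: theta2}): I will apply the two comparison lemmas already in hand (Lemma \ref{lmm: E2} and Lemma \ref{lmm: Ed}) to show that $\theta^1_{k,s}$ is already an isomorphism at every position which survives to contribute to $H_N$ for $N\le d(D;\Sigma,n)$, then pass to $\theta^\infty$, and finally upgrade the homology equivalence through dimension $d(D;\Sigma,n)$ to a homotopy equivalence via the simple-connectedness result of Lemma \ref{lmm: 1-connected}.

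First I would fix an integer $N$ with $0\le N\le d(D;\Sigma,n)$ and verify, case by case, that $\theta^1_{k,k+N}$ is an isomorphism for every $k\ge 0$. If $k=0$ this is trivial since both $E^1_{0,s}$ and ${}'E^1_{0,s}$ equal $\Z$ for $s=0$ and vanish otherwise (Lemma \ref{lmm: Ed}(ii)). If $1\le k\le\lfloor d_{\min}/n\rfloor$ the conclusion is exactly Lemma \ref{lmm: E2}. If $k=\lfloor d_{\min}/n\rfloor +1$, then
\[
s=k+N\le\lfloor d_{\min}/n\rfloor+1+d(D;\Sigma,n)=(2n\rmin(\Sigma)-2)\lfloor d_{\min}/n\rfloor-1,
\]
so both sides vanish by Lemma \ref{lmm: Ed}(v). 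For $k\ge\lfloor d_{\min}/n\rfloor +2$ both sides vanish by Lemma \ref{lmm: Ed}(i). Thus $\theta^1_{k,k+N}$ is an isomorphism for every $k$ and every $N\le d(D;\Sigma,n)$.

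Next I want to promote this to $\theta^\infty$, which requires controlling sources and targets of the differentials $d^t\colon E^t_{k,s}\to E^t_{k+t,s+t-1}$. Outgoing differentials drop the antidiagonal index $N=s-k$ by $1$, so their targets lie at antidiagonal $N-1\le d(D;\Sigma,n)-1$, where $\theta^1$ is again iso by the step above; incoming differentials come from antidiagonal $N+1$, but their source lies at $(k-t,s-t+1)$ with $k-t\le\lfloor d_{\min}/n\rfloor$ whenever the target $(k,s)$ has $k\le\lfloor d_{\min}/n\rfloor +1$, and in that range Lemma \ref{lmm: E2} gives an iso for \emph{every} $s'$, so the antidiagonal $N+1$ causes no trouble. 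A routine $5$-lemma induction on pages (using that the spectral sequences are concentrated in columns $0\le k\le\lfloor d_{\min}/n\rfloor+1$, so they collapse at a finite page) then yields that $\theta^\infty_{k,k+N}$ is an isomorphism for all $k$ and all $N\le d(D;\Sigma,n)$, and a second $5$-lemma on the convergent filtration of $H_N$ gives that $(s_{D,D+\textbf{\textit{e}}_i})_*\colon H_N(\po^{D,\Sigma}_n;\Z)\to H_N(\po^{D+\textbf{\textit{e}}_i,\Sigma}_n;\Z)$ is an isomorphism in the claimed range.

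Finally, to upgrade this homology equivalence to a homotopy equivalence through dimension $d(D;\Sigma,n)$, I invoke Lemma \ref{lmm: 1-connected}: since $n\ge 2$ and $\rmin(\Sigma)\ge 2$, both source and target are simply connected, and a map between simply connected spaces inducing isomorphisms on integral homology through dimension $N$ induces isomorphisms on homotopy groups through dimension $N$ by the Whitehead/relative Hurewicz theorem. The delicate point of the proof will be the bookkeeping in the second step, namely verifying that the sources of the incoming differentials at antidiagonal $d(D;\Sigma,n)+1$ genuinely lie in the region $k\le\lfloor d_{\min}/n\rfloor$ covered by Lemma \ref{lmm: E2}; everything else reduces to the dimension and vanishing counts already recorded in Lemma \ref{lmm: Ed}.
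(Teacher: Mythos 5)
Your proposal is correct and follows essentially the same route as the paper: a comparison of the truncated Vassiliev spectral sequences via $\theta^t_{k,s}$, using Lemma \ref{lmm: E2} in columns $k\le\lfloor d_{\min}/n\rfloor$ and the vanishing statements of Lemma \ref{lmm: Ed} in column $\lfloor d_{\min}/n\rfloor+1$, followed by the simple-connectivity upgrade from Lemma \ref{lmm: 1-connected}. The only cosmetic difference is in the bookkeeping of the second step — you verify directly that every position feeding into antidiagonal $N\le d(D;\Sigma,n)$ is already good on the $E^1$-page, whereas the paper propagates the unknown region $\mathcal{S}_1$ forward through the differentials and checks that its antidiagonal stays above $d(D;\Sigma,n)+1$ — but both computations reduce to the same inequality $(2n\rmin(\Sigma)-2)\lfloor d_{\min}/n\rfloor-1-(\lfloor d_{\min}/n\rfloor+1)=d(D;\Sigma,n)$.
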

\begin{proof}
We write $\rmin =\rmin (\Sigma)$
and
$d_{\rm min}^{\p}=\lfloor \frac{d_{\rm min}}{n}\rfloor$
as in the proof of Lemma \ref{lmm: Ed}.
If $n\geq 2$, the spaces
$\po^{D,\Sigma}_n$ and 
$\po^{D+\textbf{\textit{e}}_i,\Sigma}_n$ are simply connected
by Lemma \ref{lmm: 1-connected}.
Thus it suffices to prove that the map
$s_{D,D+\textbf{\textit{e}}_i}$ is a homology equivalence
through dimension $d(D;\Sigma,n)$.
\par
Let us consider the homomorphism
$\theta_{k,s}^t:E^{t}_{k,s}\to \ ^{\p}E^{t}_{k,s}$
of truncated spectral sequences given in (\ref{equ: theta2}).
By using the commutative diagram (\ref{eq: diagram}) and the comparison theorem for spectral sequences, 
it suffices to prove that the positive integer $d(D;\Sigma ,n)$ 
has the 
following property:
\begin{enumerate}
\item[$(\dagger)$]
$\theta^{\infty}_{k,s}$
is  an isomorphism for all $(k,s)$ such that $s-k\leq d(D;\Sigma ,n)$.
\end{enumerate}
By Lemma \ref{lmm: Ed}, 
$E^1_{k,s}=\ ^{\p}E^1_{k,s}=0$ if
$k<0$, or if $k\geq \dmin^{\p} +2$, or if $k=\dmin^{\p} +1$ with 
$s\leq (2n\rmin -2)\dmin^{\p} -1$.
Since $\{(2n\rmin -2)\dmin^{\p} -1\}-(\dmin^{\p}+1)=(2n\rmin -3)\dmin^{\p} -2
=d(D;\Sigma ,n)$,
we  see that:
\begin{enumerate}
\item[$(\dagger)_1$]
if $k< 0$ or $k\geq \dmin^{\p} +1$,
$\theta^{\infty}_{k,s}$ is an isomorphism for all $(k,s)$ such that
$s-k\leq d(D;\Sigma ,n)$.
\end{enumerate}
\par
Next,  assume that $0\leq k\leq \dmin^{\p}$, and investigate the condition that
$\theta^{\infty}_{k,s}$  is an isomorphism.
Note that the groups $E^1_{k_1,s_1}$ and $^{\p}E^1_{k_1,s_1}$ are not known for
$(u,v)\in\mathcal{S}_1=
\{(\dmin^{\p}+1,s)\in\Z^2:s\geq (2n\rmin -2)\dmin^{\p} \}$.
By considering the differentials $d^1$'s of
$E^1_{k,s}$ and $^{\p}E^1_{k,s}$,
and applying Lemma \ref{lmm: E2}, we see that
$\theta^2_{k,s}$ is an isomorphism if
$(k,s)\notin \mathcal{S}_1 \cup \mathcal{S}_2$, where
$$
\mathcal{S}_2=
\{(u,v)\in\Z^2:(u+1,v)\in \mathcal{S}_1\}
=\{(\dmin^{\p} ,v)\in \Z^2:v\geq (2n\rmin -2)\dmin^{\p}\}.
$$
A similar argument  shows that
$\theta^3_{k,s}$ is an isomorphism if
$(k,s)\notin \bigcup_{t=1}^3\mathcal{S}_t$, where
$\mathcal{S}_3=\{(u,v)\in\Z^2:(u+2,v+1)\in \mathcal{S}_1\cup
\mathcal{S}_2\}.$
Continuing in the same fashion,
considering the differentials
$d^t$'s on $E^t_{k,s}$ and $^{\p}E^t_{k,s}$
and applying the inductive hypothesis,
we  see that $\theta^{\infty}_{k,s}$ is an isomorphism
if $\dis (k,s)\notin \mathcal{S}:=\bigcup_{t\geq 1}\mathcal{S}_t
=\bigcup_{t\geq 1}A_t$,
where  $A_t$ denotes the set
$$
A_t=
\left\{
\begin{array}{c|l}
 &\mbox{ There are positive integers }l_1,\cdots ,l_t
\mbox{ such that},
\\
(u,v)\in \Z^2 &\  1\leq l_1<l_2<\cdots <l_t,\ 
u+\sum_{j=1}^tl_j=\dmin^{\p} +1,
\\
& \ v+\sum_{j=1}^t(l_j-1)\geq (2n\rmin -2)\dmin^{\p}
\end{array}
\right\}.
$$
Note that 
if this set was empty for every $t$, then, of course, the conclusion of 
Theorem \ref{thm: III} would hold in all dimensions (this is known to be false in general). 
If $\dis A_t\not= \emptyset$, it is easy to see that
\begin{align*}
a(t)&=
\min \{s-k:(k,s)\in A_t\}=
(2n\rmin -2)\dmin^{\p} -(\dmin^{\p} +1)+t
\\
&=
(2n\rmin -3)\dmin^{\p}+t-1
=d(D;\Sigma ,n)+t+1.
\end{align*}
Hence, we obtain that
$\min \{a(t):t\geq 1,A_t\not=\emptyset\}=d(D;\Sigma ,n)+2.$
 Since $\theta^{\infty}_{k,s}$ is an isomorphism
for any $(k,s)\notin \bigcup_{t\geq 1}A_t$ for each $0\leq k\leq \dmin^{\p}$,
we have the following:
\begin{enumerate}
\item[$(\dagger)_2$]
If $0\leq k\leq \dmin^{\p}$,
$\theta^{\infty}_{k,s}$ is  an isomorphism for any $(k,s)$ such that
$s-k\leq  d(D;\Sigma ,n)+1.$
\end{enumerate}
Then, by $(\dagger)_1$ and $(\dagger)_2$, we know that
$\theta^{\infty}_{k,s}:E^{\infty}_{k,s}\stackrel{\cong}{\rightarrow}
\ ^{\p}E^{\infty}_{k,s}$ 
is an isomorphism for any $(k,s)$
if $s-k\leq d(D;\Sigma ,n)$. 
Hence, by $(\dagger)$  we have the desired
assertion and this completes the proof of Theorem \ref{thm: III}.
\end{proof}
\begin{crl}\label{crl: III}
Let $n\geq 2$.
Then
for each $\textbf{\textit{a}}\in (\Z_{\geq 0})^r$ with
$\textbf{\textit{a}}\not= {\bf 0}_r$,
the stabilization map
$$
s_{D,D+\textbf{\textit{a}}}:
\po^{D,\Sigma}_n\to 
\po^{D+\textbf{\textit{a}},\Sigma}_n
$$
is a homotopy equivalence through dimension
$d(D;\Sigma ,n)$. 
\end{crl}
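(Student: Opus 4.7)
The plan is to reduce Corollary \ref{crl: III} to Theorem \ref{thm: III} by decomposing the general stabilization map $s_{D,D+\textbf{\textit{a}}}$ into a finite composition of stabilization maps of the form $s_{D^{\p},D^{\p}+\textbf{\textit{e}}_i}$ and then invoking the composition property (\ref{eq: stab-compo}) together with the observation that $d(D;\Sigma,n)$ is weakly monotone in $D$.

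More precisely, given a nonzero $\textbf{\textit{a}}=(a_1,\ldots ,a_r)\in (\Z_{\geq 0})^r$, set $N=\sum_{k=1}^r a_k\geq 1$ and write $\textbf{\textit{a}}=\textbf{\textit{e}}_{i_1}+\textbf{\textit{e}}_{i_2}+\cdots +\textbf{\textit{e}}_{i_N}$ as a sum of standard basis vectors (with repetition). Define inductively $D_0=D$ and $D_l=D_{l-1}+\textbf{\textit{e}}_{i_l}$ for $1\leq l\leq N$, so that $D_N=D+\textbf{\textit{a}}$. By iterated application of (\ref{eq: stab-compo}), we obtain up to homotopy the factorization
\begin{equation*}
s_{D,D+\textbf{\textit{a}}}\;\simeq\; s_{D_{N-1},D_N}\circ s_{D_{N-2},D_{N-1}}\circ \cdots \circ s_{D_0,D_1}.
\end{equation*}

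The second step is to apply Theorem \ref{thm: III} to each factor $s_{D_{l-1},D_l}=s_{D_{l-1},D_{l-1}+\textbf{\textit{e}}_{i_l}}$: it is a homotopy equivalence through dimension $d(D_{l-1};\Sigma ,n)$. Since $D_{l-1}$ is obtained from $D$ by adding non-negative integers to the components, we have $d_{\min}(D_{l-1})\geq d_{\min}(D)$, and hence
\begin{equation*}
d(D_{l-1};\Sigma ,n)=(2n\rmin (\Sigma)-3)\lfloor \tfrac{d_{\min}(D_{l-1})}{n}\rfloor -2 \;\geq\; d(D;\Sigma ,n)
\end{equation*}
for every $1\leq l\leq N$. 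Therefore each factor in the composition is a homotopy equivalence through dimension at least $d(D;\Sigma ,n)$.

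The final step is to observe that the property of being a homotopy equivalence through dimension $M$ is closed under composition: if $f_*:\pi_k(X)\to \pi_k(Y)$ and $g_*:\pi_k(Y)\to \pi_k(Z)$ are both isomorphisms for all $k\leq M$, then so is $(g\circ f)_*$. Applying this to the above chain of $N$ maps yields that $s_{D,D+\textbf{\textit{a}}}$ is a homotopy equivalence through dimension $d(D;\Sigma ,n)$, proving the corollary. No real obstacle arises here, as the argument is purely formal given Theorem \ref{thm: III}; the only minor point to check is the monotonicity of $d(D;\Sigma ,n)$ in the components of $D$, which is immediate from the defining formula (\ref{eq: dDSigma}).
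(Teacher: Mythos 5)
Your proposal is correct and takes essentially the same route as the paper, which simply states that the corollary follows from the composition property (\ref{eq: stab-compo}) together with Theorem \ref{thm: III}; you have merely filled in the routine details (decomposition of $\textbf{\textit{a}}$ into standard basis vectors, monotonicity of $d(D;\Sigma,n)$ in $D$, and closure of ``homotopy equivalence through dimension $M$'' under composition). Nothing further is needed.
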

\begin{proof}
The assertion easily follows from
(\ref{eq: stab-compo}) and Theorem \ref{thm: III}.
\end{proof}

\section{Scanning maps}\label{section: scanning maps}
In this section we consider configuration spaces and the scanning map.

\begin{dfn}
{\rm
 For a positive integer $d\geq 1$ and a based space $X$, let
 $\SP^d(X)$ denote {\it the $d$-th symmetric product} of $X$ defined as
the orbit space 
\begin{equation}
\SP^d(X)=X^d/S_d,
\end{equation}
where the symmetric group $S_d$ of $d$ letters acts on the $d$-fold
 product $X^d$ in the natural manner.
 \qed
 }
\end{dfn}
\begin{rmk}
{\rm
(i)
An element $\eta\in \SP^d(X)$ may be identified with a formal linear
combination
\begin{equation}
\eta =\sum_{k=1}^sn_k\alpha_k,
\end{equation}
 where
$\alpha_1,\cdots ,\alpha_s$ are distinct points in $X$
and $n_1,\cdots ,n_s$ are positive integers such that
$\sum_{k=1}^sn_k=d$.
In this situation we shall refer to $\eta$ as configuration 
(or divisor) of points, the points $\alpha_k\in X$
having {\it a multiplicity} $d_k$.
\par
(ii)
For example, when $X=\C$, there is a natural homeomorphism
\begin{equation}\label{eq: psi}
\begin{CD}
\P^d @>\psi_d>\cong> \SP^d(\C) 
\\
f(z)=\prod_{k=1}^s(z-\alpha_k)^{n_k} @>>>
\eta =\sum_{k=1}^sn_k\alpha_k
\end{CD}
\end{equation}
where
$n_k\in \N$ with $\sum_{k=1}^sn_k=d$.
\qed
}
\end{rmk}

\begin{dfn}
{\rm
(i)
When $A\subset X$ is a closed subspace, define an equivalence relation
\lq\lq$\sim$\rq\rq \ on $\SP^d(X)$ by
\begin{equation}
\eta_1\sim \eta_2\quad
\mbox{if }\quad
\eta_1 \cap (X\setminus A)=\eta_2 \cap (X\setminus A)
\quad
\mbox{for }\eta_1,\eta_2\in \SP^d(X).
\end{equation}
Define $\SP^d(X,A)$ as the quotient space
\begin{equation}
\SP^d(X,A)=\SP^d(X)/\sim .
\end{equation}
Note that the points in $A$ are ignored in $\SP^d(X,A)$.
\par
(ii)
If $A\not=\emptyset$, we have a natural inclusion 
$\SP^d(X,A)\subset \SP^{d+1}(X,A)$
given by adding a point in $A$, and we can define
$\SP^{\infty}(X,A)$ as the union
\begin{equation}
\SP^{\infty}(X,A)=\bigcup_{d=1}^{\infty}\SP^d(X,A).
\end{equation}
\par
(iii)
For each $D=(d_1,\cdots ,d_r)\in (\Z_{\geq 1})^r$, let
$E_n^{D,\Sigma}(X)$ denote the subspace of $\SP^d(X)^{rn}$
given by
\begin{equation}\label{eq: ESD}
E_n^{D,\Sigma}(X)=
\big\{(\xi_1,\cdots ,\xi_r)
\in
\prod_{i=1}^r\SP^{d_i}(X)^n
:\mbox{(\ref{eq: ESD}.1),  (\ref{eq: ESD}.2)}
\big\}, 
\end{equation}
where two conditions (\ref{eq: ESD}.1) and (\ref{eq: ESD}.2) are given by
\begin{enumerate}
\item[(\ref{eq: ESD}.1)]
For each $1\leq i\leq r$,
$\xi_i=(\xi_{i,1},\cdots ,\xi_{i,n})\in \SP^{d_i}(X)^n$
with
$\xi_{i,j}\in \SP^{d_i}(X)$.
\item[(\ref{eq: ESD}.2)]
$\bigcap_{(i,j)\in \sigma\times [n]}\xi_{i,j}=\emptyset$
for any $\sigma \in I(\KS)$.
\end{enumerate}
\par
(iv)
Let $A\subset X$ be a closed subspace and $A\not=\emptyset$.
We define an equivalence relation \lq\lq$\sim$\rq\rq \ on
the space $E^{D,\Sigma}_n(X)$ by
$$
(\xi_1,\cdots ,\xi_r)\sim
(\eta_1,\cdots ,\eta_r)
\quad
\mbox{if }\quad
\xi_i \cap (X\setminus A)=\eta_i \cap (X\setminus A)
\quad
\mbox{for each }1\leq j\leq r.
$$
Let 
$E^{D,\Sigma}_n(X,A)$ be the quotient space
\begin{equation}
E^{D,\Sigma}_n(X,A)=E^{D,\Sigma}_n(X)/\sim .
\end{equation}
Adding points in $A$ gives a natural inclusion
$
E^{D,\Sigma}_{n}(X,A)\subset 
E_n^{D+\textbf{\textit{e}}_i,\Sigma}(X,A)
$ for
each $1\leq i\leq r$.
So,  one can define the space
$E^{\Sigma}_{n}(X,A)$ as the union} 
\begin{equation}
E^{\Sigma}_{n}(X,A)=\bigcup_{D\in \N^r}
E^{D,\Sigma}_{n}(X,A).
\qquad\qquad
\qed
\end{equation}
\end{dfn}
\begin{rmk}\label{rmk: ESigma}
{\rm
(i)
For each $D=(d_1,\cdots ,d_r)\in \N^r$,
there is a natural homeomorphism
\begin{equation}\label{eq: identify}
\begin{CD}
\po^{d,\Sigma}_n @>\Psi_D>\cong>  E^{D,\Sigma}_n(\C)
\\
(f_1(z),\cdots ,f_r(z)) @>>> (\Psi _{d_1}(f_1(z)),\cdots ,\Psi_{d_r} (f_r(z)))
\end{CD}
\end{equation}
where  $\Psi_d(f(z))\in \SP^d(\C)^n$ denotes the $n$-tuple of configuration given by
\begin{equation}
\Psi_d(f(z))=
(\psi_d(f(z)),\psi_d(f(z)+f^{\p}(z)),\cdots
,\psi_d(f(z)+f^{(n-1)}(z)))
\end{equation}
for $f(z)\in \P^d$,
where $\psi_d$ is the map defined in (\ref{eq: psi}).
\par
(ii)
In general, $E^{D,\Sigma}_n(\C)$ is path-connected.
In fact, for any two points $\xi_0$ and $\xi_1$ in $E^{D,\Sigma}_n(\C)$ one can construct
a path $\omega :[0,1]\to E^{D,\Sigma}_n(\C)$ such that
$\omega (i)=\xi_i$ for $i\in\{0,1\}$ 
by the method explained in  \cite[\S Appendix]{GKY1}. Hence,
the space $\po^{d,\Sigma}_n$ is also path connected. 
\qed
}
\end{rmk}

\begin{dfn}\label{dfn: stabilization etc}
{\rm
Let $\varphi_D:\C\stackrel{\cong}{\longrightarrow}
U_D$ be any fixed homeomorphism, and
we choose any
mutually distinct fixed $r$ points 
$x_1,\cdots ,x_r\in \C\setminus U_D$ 
as in Definition \ref{dfn: stab}.
\par
(i)
Let $d$ be a positive integer and let
$\eta=\sum_{k=1}^sn_ky_k\in \SP^d(\C)$
be any element such that $\{y_k\}_{k=1}^s\in C_s(\C)$
and $n_k\in \Z_{\geq 1}$ with $\sum_{k=1}^sn_k=d$.
In this situation let $\tilde{\varphi}_d(\eta)\in \SP^d(U_D)$ denote the configuration given by
\begin{equation}
\tilde{\varphi}_D(\eta)=\sum_{k=1}^sn_k\varphi_D(y_k).
\end{equation}
\par
(ii)
When $\eta =(\eta_1,\cdots ,\eta_n)\in \SP^{d}(\C)^n$ with
$\eta_i\in\SP^d(\C)$, let
$\Phi_D(\eta)\in \SP^d(\C)^n$
denote the $n$-tuple of configurations given by
\begin{equation}
\Phi_D(\eta)=(\tilde{\varphi}_D(\eta_1),\cdots ,\tilde{\varphi}_D(\eta_n)).
\end{equation}
\par
(iii)
For each
$\textbf{\textit{a}}=(a_1,\cdots ,a_r)\not= {\bf 0}_r\in (\Z_{\geq 0})^r$,
define the stabilization map
\begin{equation}
\hat{s}_{D,D+\textbf{\textit{a}}}:
E^{D,\Sigma}_n(\C)\to 
E^{D+\textbf{\textit{a}}}_n(\C)
\end{equation}
by
\begin{equation}
\hat{s}_{D,D+\textbf{\textit{a}}}
(\xi_1,\cdots ,\xi_r)=
(\Phi_D(\xi_1)+a_1\overline{x_1},\cdots ,\Phi_D(\xi_r)+a_r\overline{x_r})
\end{equation}
for $(\xi_1,\cdots ,\xi_r)\in E^{D,\Sigma}_n(U_D)$
with $\xi_i=(\xi_{i,1},\cdots ,\xi_{i,n})\in
\SP^{d_i}(\C)^n$,
where we set
\begin{equation}
\Phi_D(\xi_i)+a_i\overline{x_i}=
(\tilde{\varphi}_D(\xi_{i,1})+a_ix_i,\cdots ,\tilde{\varphi}_D(\xi_{i,n})+a_ix_i).
\end{equation}
It is easy to see that the diagram
\begin{equation}
\begin{CD}
\po^{D,\Sigma}_n @>s_{D,D+\textbf{\textit{a}}}>> \po^{D+\textbf{\textit{a}}}_n
\\
@V{\Psi_D}V{\cong}V @V{\Psi_{D+\textbf{\textit{a}}}}V{\cong}V
\\
E^{D,\Sigma}_n (\C)@>\hat{s}_{D,D+\textbf{\textit{a}}}>> E^{D+\textbf{\textit{a}}}_n(\C)
\end{CD}
\end{equation}
is commutative.
\qed
}
\end{dfn}

Now we are ready to define the scanning map.
\begin{dfn}
{\rm
Let $\epsilon_0>0$ be any fixed sufficiently small number and let
$U=\{w\in \C:\vert w\vert <1\}$.
For each $w\in \C$, let $U_w=\{x\in \C:\vert x-w\vert <\epsilon_0\}$.
Then
for an element $\eta =(\eta_1,\cdots ,\eta_r)\in E^{D,\Sigma}_n(\C),$
define a map
$
sc_D(\eta):\C \to 
E^{\Sigma}_{n}(\overline{U},\partial \overline{U})
$
by 
$$
w\mapsto
\eta \cap \overline{U}_w=
(\eta_1\cap \overline{U}_w,\cdots ,\eta_r\cap \overline{U}_w)
\in E^{\Sigma}_n(\overline{U}_w,\partial \overline{U}_w)
\cong
E_{n}^{\Sigma}(\overline{U},\partial \overline{U})
$$
for $w\in \C$, where we identify
$(\overline{U}_w,\partial \overline{U}_w)$
with $(\overline{U},\partial \overline{U})$
in the canonical way.
Since $\dis \lim_{w\to\infty}sc(\eta)(w)=(\emptyset,\cdots ,\emptyset),$
it naturally extends to a map
\begin{equation}\label{equ: sc}
sc(\eta):S^2=\C\cup\infty \to E^{\Sigma}_{n}(\overline{U},\partial \overline{U})
\end{equation}
with $sc(\eta)(\infty)=(\emptyset ,\cdots ,\emptyset).$
Now we choose the point $\infty$ and the empty configuration
$(\emptyset,\cdots ,\emptyset)$ as the base-points of 
$S^2=\C \cup \infty$ and 
$E^{\Sigma}_n(\overline{U},\partial \overline{U})$, respectively. 
Then the map $sc(\eta)$ is a base-point preserving map, and
we obtain a map 
$$
sc:E^{D,\Sigma}_n(\C)\to
\Omega^2 E^{\Sigma}_n(\overline{U},\partial \overline{U}).
$$
However,
since $E^{D,\Sigma}_n(\C)$ is connected, the image of the map $sc$ is contained some path-component of 
$\Omega^2E^{\Sigma}_n(\overline{U},\partial \overline{U})$,
which we denote by
$\Omega^2_DE^{\Sigma}_n(\overline{U},\partial \overline{U}).$ 
Thus we have the map
\begin{equation}\label{equ: scanning}
sc_D:E^{D,\Sigma}_n(\C)\to
\Omega^2_D E^{\Sigma}_n(\overline{U},\partial \overline{U}).
\end{equation}
Since we can identify $\po^{D,\Sigma}_n=E^{D,\Sigma}_n(\C)$
as in (\ref{eq: identify}),
we obtain 
the map
\begin{equation}\label{equ: scanning map}
sc_D:\po^{D,\Sigma}_n
\to
\Omega^2_D E^{\Sigma}_n(\overline{U},\partial \overline{U}).
\end{equation}
\par We refer to this map (and others defined by the same method) as 
\lq\lq\ the scanning map\rq\rq. 
\par
Now let ${\bf 0}_r\not= \textbf{\textit{a}}\in (\Z_{\geq 0})^r$
be an $r$-tuple of integers.
Then
it is easy to see that there is a commutative diagram
\begin{equation}\label{CD1}
\begin{CD}
\po^{D,\Sigma}_n @>sc_D>>
\Omega^2_D E^{\Sigma}_n(\overline{U},\partial \overline{U})
\\
@V{s_{D,D+\textbf{\textit{a}}}}VV   @VV{\simeq}V
\\
\po^{D+\textbf{\textit{a}},\Sigma}_n@>sc_{D+\textbf{\textit{a}}}>>
\Omega^2_{D+\textbf{\textit{a}}}
 E^{\Sigma}_n(\overline{U},\partial \overline{U})
\end{CD}
\end{equation}
\par
Let 
$\dis \po^{D+\infty,\Sigma}_n
=
\lim_{\textbf{\textit{a}}\to\infty}
\po^{D+\textbf{\textit{a}},\Sigma}_n
$
denote the colimit constructed from the stabilization maps
$\{s_{D,D+\textbf{\textit{a}}}:\textbf{\textit{a}}\in (\Z_{\geq 0})^r\}$,
where the notation 
$\textbf{\textit{a}}=(a_1,\cdots ,a_r)\to\infty$ 
means that
$\min \{a_k:1\leq k\leq r\}\to \infty$.
\par
Then by using (\ref{CD1}) we obtain 
{\it the stabilized scanning map}
\begin{equation}
S:
\po^{D+\infty}_n
=\lim_{\textbf{\textit{a}}\to\infty}\po^{D+\textbf{\textit{a}},\Sigma}_n
\to
\Omega^2_0 E^{\Sigma}_n(\overline{U},\partial \overline{U}),
\end{equation}
where  $\dis S=\lim_{\textbf{\textit{a}}\to\infty}sc_{D+\textbf{\textit{a}}}$ and
$\Omega^2_0 X$
denotes the path component of
$\Omega^2 X$
which contains the constant map.
\qed
}
\end{dfn}
\begin{thm}\label{thm: scanning map}
The stabilized scanning map
$$
S:
\po^{D+\infty}_n
\stackrel{\simeq}{\longrightarrow}
\Omega^2_0 E^{\Sigma}_n(\overline{U},\partial \overline{U})
$$
is a homotopy equivalence.
\qed
\end{thm}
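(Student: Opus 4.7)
The plan is to follow the classical scanning-map argument of Segal \cite{Se}, adapted to multi-labelled configurations as the authors have done in their earlier work \cite{KY8}, \cite{KY9} and as refined in the group-completion framework of McDuff. The key point is that once the previous section identifies the local model $E^{\Sigma}_n(\overline{U},\partial \overline{U})$ as a Davis--Januszkiewicz type space, the scanning construction (\ref{equ: sc}) becomes structurally the same as Segal's comparison $C(\R^2, Y) \to \Omega^2\Sigma^2 Y$, with $Y$ playing the role of the label space.

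First, I would pass to the colimit in the square (\ref{CD1}). The stabilization maps on the left give the telescope $\po^{D+\infty,\Sigma}_n$, while on the right every $\Omega^2_{D+\textbf{\textit{a}}}E^{\Sigma}_n(\overline{U},\partial \overline{U})$ is canonically homotopy equivalent to $\Omega^2_0 E^{\Sigma}_n(\overline{U},\partial \overline{U})$ via a component shift (all components of a double loop space are weakly equivalent); hence the vertical maps on the right are homotopy equivalences and $S$ is well-defined as a single map into the base component. Next, I would endow $\bigsqcup_{D\in\N^r} E^{D,\Sigma}_n(\C)$ with a partial-monoid structure by translating configurations into disjoint half-planes and juxtaposing them; after stabilization, $\po^{D+\infty,\Sigma}_n$ becomes an $H$-space, and the scanning map $S$ is an $H$-map (this is immediate from the definition, since scanning is performed in small $\epsilon_0$-disks and therefore respects disjoint juxtaposition). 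Because $\Omega^2_0 E^{\Sigma}_n(\overline{U},\partial \overline{U})$ is already group-like, reducing to a group-completion statement is the natural move.

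To obtain the group-completion equivalence I would filter $E^{\Sigma}_n(\overline{U},\partial \overline{U})$ by the total number of points $d = d_1 + \cdots + d_r$ (outside the boundary), and for each $k\geq 1$ compare the stratification
\[
E^{\Sigma}_{n,<k}(\overline{U},\partial \overline{U}) \hookrightarrow E^{\Sigma}_{n,\leq k}(\overline{U},\partial \overline{U}) \twoheadrightarrow (\text{labelled configurations of exactly } k \text{ particles near }\partial \overline{U})
\]
with the corresponding path-loop fibration for $\Omega^2$. Using the standard Dold--Thom/McDuff criterion one shows that the projection "remove the single particle farthest from the boundary" is a quasi-fibration; combined with an inductive hypothesis on $k$ and the commutativity of scanning with these restrictions, the five-lemma (or a Zeeman-type comparison of spectral sequences) propagates the weak equivalence from the bottom of the filtration up to the colimit, producing the required isomorphism in homology of $S$ on each component. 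Since both sides are connected $H$-spaces and the target is group-like, Whitehead's theorem completes the argument.

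The main obstacle is verifying the quasi-fibration property in this multi-labelled, multiplicity-bounded setting: the constraint $(\dagger_{\Sigma,n})$ couples the $r$ coordinates through the primitive collections of $\Sigma$ and allows common roots of multiplicity $<n$, which makes the local structure of the "remove one particle" map genuinely more intricate than in Segal's case or in the $n=1$ situation of \cite{KY9}. The key local input needed to bypass this is precisely the identification $E^{\Sigma}_n(\overline{U},\partial \overline{U})\simeq DJ(\KS(n))$ promised in Section \ref{section: scanning maps}: once this identification is available, the fiber of the "remove a particle" quasi-fibration matches the local model appearing in the $\Omega^2$ side, and the induction becomes formal. Everything else (telescoping, $H$-space compatibility, loop-component identification) is standard.
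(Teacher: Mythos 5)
Your outline is essentially the argument the paper relies on: the paper's proof of this theorem is a one-line citation of Segal's scanning method as implemented in \cite{Gu2} (Prop.\ 4.4) and \cite{GKY2}, and that is exactly the telescope / partial-monoid / group-completion / quasifibration scheme you describe, with Whitehead's theorem applicable because the stabilized source is a connected $H$-space (indeed simply connected for $n\geq 2$ by the connectivity lemma). One small correction: the identification $E^{\Sigma}_n(\overline{U},\partial\overline{U})\simeq DJ(\KS(n))$ of Lemma \ref{lmm: E-infty} is not the input that makes the quasifibration step work --- it is used only afterwards to identify the homotopy type of the target --- whereas the homology-fibration property of the ``restrict/remove particles'' maps rests on the fact that the condition $(\dagger_{\Sigma,n})$ is local (membership of the $n$-jet in $L_n(\Sigma)$ is a pointwise, coordinate-wise open condition), which is what allows the arguments of \cite{GKY2} and \cite{Gu2} to be carried over verbatim.
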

\begin{proof}
The assertion can be proved
by using Segal's scanning  method given in
\cite[Prop. 4.4]{Gu2} (cf. \cite{Gu1}) and \cite{GKY2}.
\end{proof}

Next we investigate about the homotopy type of the space
$E^{\Sigma}_n(\overline{U},\partial \overline{U})$.

\begin{dfn}\label{dfn: veeK}
{\rm
Let $(X,*)$ be a based space, let $I$ be 
a collection of some subsets of $[N]=\{1,2,\cdots ,N\}$, and
let $\Sigma$ be a fan in $\R^m$.
\begin{enumerate}
\item[(i)]
Let $\vee^IX$ denote the subspace of $X^N$ defined by
\begin{equation}
\vee^IX=\{(x_1,\cdots ,x_N)\in X^N:\ (\dagger)_I\},
\mbox{ where}
\end{equation}
\begin{enumerate}
\item[]
$(\dagger)_I$\ \ 
For each $\sigma\in I$, there is some $j\in \sigma$ such that $x_j=*$.
\end{enumerate}
\item[(ii)]
Recall the set
\begin{equation} 
[r]\times [n]
=\{(i,j)\in \N^2:1\leq i\leq r,1\leq j\leq n\}
\end{equation}
of $rn$ points and let $I(\Sigma,n)$ denote the collection of subsets in $[r]\times [n]$ 
defined by
\begin{equation}
I(\Sigma,n)=\{\sigma\times [n]:\sigma\in I(\KS)\}.
\end{equation}
\item[(iii)]
Similarly let $\KS (n)$ denote the simplicial complex on the index set $[r]\times [n]$
defined by
\begin{equation}
\KS (n)=\{\tau \subset [r]\times [n]:\sigma\times [n] \not\subset \tau\
\mbox{ for any }\sigma\in I(\KS)\}.\quad
\qed
\end{equation}
\end{enumerate}
}
\end{dfn}

\begin{lmm}[cf. \cite{KY6}, Lemma 6.3]\label{lmm: veeK}
Let $K$ be a simplicial complex on the index set $[N]$ and let
$(X,*)$ be a based space.
\begin{enumerate}
\item[$\I$]
$I(K)=\{\sigma\subset [N]:\sigma\not\subset \tau
\mbox{ for any }\tau\in K\}$.
\item[$\II$]
$\mathcal{Z}_K(X,*)=\vee^{I(K)}X$.
\end{enumerate}
\end{lmm}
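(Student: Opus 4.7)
The plan is to reduce both assertions to elementary set-theoretic manipulations using the downward-closure property of a simplicial complex: if $\sigma \subset \tau$ and $\tau \in K$, then $\sigma \in K$.

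For part (i), the definition $I(K)=\{\sigma\subset[N]:\sigma\notin K\}$ given in (\ref{eq: IK}) means it suffices to show that $\sigma \notin K$ if and only if $\sigma \not\subset \tau$ for every $\tau \in K$. The forward implication is the contrapositive of the downward-closure axiom: if $\sigma \subset \tau$ for some $\tau \in K$, then $\sigma \in K$. The backward implication is trivial, by taking $\tau = \sigma$.

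For part (ii), the natural tool is the support map $S\colon X^N \to 2^{[N]}$ defined by $S(x)=\{k\in[N]:x_k\neq *\}$. I will establish two reformulations. First, $x\in \mathcal{Z}_K(X,*)$ iff $S(x)\in K$: indeed, $x\in(X,*)^\sigma$ is literally the condition $S(x)\subset\sigma$, so membership in $\bigcup_{\sigma\in K}(X,*)^\sigma$ says $S(x)\subset\sigma$ for some $\sigma\in K$, and downward-closure of $K$ lets one replace this by $S(x)\in K$. Second, $x\in\vee^{I(K)}X$ iff no $\sigma\in I(K)$ is contained in $S(x)$, which is just the contrapositive rewriting of the defining condition $(\dagger)_{I(K)}$, since $j\in\sigma$ with $x_j=*$ is the same as $j\in\sigma\setminus S(x)$. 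With these reformulations in hand, part (ii) reduces to the equivalence $S(x)\in K$ iff no $\sigma\in I(K)$ satisfies $\sigma\subset S(x)$; but this is immediate from part (i). Indeed, if $S(x)\in K$, then by (i) no $\sigma\in I(K)$ can be contained in $S(x)$, while conversely, if $S(x)\notin K$, then $S(x)$ itself lies in $I(K)$ and provides the forbidden inclusion $\sigma:=S(x)\subset S(x)$.

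The argument is entirely formal and I do not foresee any serious obstacle. The only mild care needed is a sanity check of the empty-set edge case: by the standing convention recalled in the footnote to Definition \ref{dfn: polyhedral product}, $\emptyset$ always lies in $K$, hence $\emptyset\notin I(K)$, so the vacuous interpretation of ``there exists $j\in\sigma$'' for $\sigma=\emptyset$ never arises in the characterization of $\vee^{I(K)}X$ and causes no pathology.
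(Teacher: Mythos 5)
Your argument is correct. Note, however, that the paper itself gives essentially no proof: it disposes of (i) with ``easily follows from the definition'' and of (ii) by citing Lemma 4.2 of \cite{KY9}, so there is no in-text argument to compare against. What you have done is supply the missing details, and the route you chose via the support map $S(x)=\{k:x_k\neq *\}$ is the natural one: the observation that $x\in(X,*)^\sigma$ is exactly $S(x)\subset\sigma$, combined with downward closure of $K$, reduces both sides of (ii) to conditions on $S(x)$, after which (i) closes the loop. The logic is airtight, including the $\sigma=\emptyset$ edge case you flagged (since $\emptyset\in K$ by convention, $\emptyset\notin I(K)$, so the vacuous failure of the condition ``$\exists j\in\sigma$ with $x_j=*$'' never occurs). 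One small stylistic remark: in verifying that $S(x)\in K$ rules out any $\sigma\in I(K)$ with $\sigma\subset S(x)$, you can also bypass part (i) and invoke downward closure of $K$ directly (such a $\sigma$ would lie in $K$, contradicting $\sigma\in I(K)$); but using (i) as you did is equally clean and makes the dependence explicit.
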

\begin{proof}
The assertion (i)  easily follows from the definition of simplicial complexes and the
assertion (ii) follows from \cite[Lemma 4.2]{KY9}.
\end{proof}

\begin{lmm}\label{lmm: E-infty}
There is a homotopy equivalence
$$
r_{\Sigma}:
E^{\Sigma}_n(\overline{U},\partial \overline{U})
\stackrel{\simeq}{\longrightarrow}
DJ(\KS (n)).
$$
\end{lmm}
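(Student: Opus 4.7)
The plan is to construct $r_\Sigma$ by applying the Dold--Thom equivalence coordinate-wise, after first identifying $E^\Sigma_n(\overline U,\partial \overline U)$ up to homotopy with a polyhedral product of symmetric products.

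First, I would recall the Dold--Thom theorem, which provides a based homotopy equivalence $\mathrm{SP}^\infty(\overline U,\partial \overline U)\simeq K(\Z,2)\simeq \CP^\infty$, where the empty configuration (the natural basepoint of $\mathrm{SP}^\infty$) is sent to the basepoint of $\CP^\infty$. Second, I would use Lemma \ref{lmm: veeK} to re-express the defining condition of $E^\Sigma_n(\overline U,\partial \overline U)$ in pointwise form: a tuple $(\xi_{i,j})_{(i,j)\in [r]\times[n]}$ belongs to this space if and only if, for every $x$ in the open disk $\overline U\setminus \partial \overline U$, the set $J(x)=\{(i,j):x\in\xi_{i,j}\}$ is a simplex of $\mathcal K_\Sigma(n)$. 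This follows because $J(x)\supset \sigma\times[n]$ iff $x$ is a common point of the configurations indexed by $\sigma\times[n]$, and the condition $\sigma\in I(\mathcal K_\Sigma)$ translating to $J(x)\notin \mathcal K_\Sigma(n)$ is precisely the defining simplicial complex $\mathcal K_\Sigma(n)$ of Definition \ref{dfn: veeK}.

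Third, I would construct the natural inclusion
\begin{equation*}
\iota\colon \mathcal Z_{\mathcal K_\Sigma(n)}\bigl(\mathrm{SP}^\infty(\overline U,\partial \overline U),*\bigr)\hookrightarrow E^\Sigma_n(\overline U,\partial \overline U),
\end{equation*}
where $*$ denotes the empty configuration. This is well defined because if $(\xi_{i,j})$ has support contained in $\tau\in\mathcal K_\Sigma(n)$, then for every $\sigma\in I(\mathcal K_\Sigma)$ we have $\sigma\times[n]\not\subset\tau$, whence some $(i,j)\in\sigma\times[n]$ has $\xi_{i,j}=\emptyset$ and so $\bigcap_{(i,j)\in\sigma\times[n]}\xi_{i,j}=\emptyset$. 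I would then show that $\iota$ is a homotopy equivalence by building a deformation retraction of $E^\Sigma_n(\overline U,\partial \overline U)$ onto its image. The idea is: exploiting the colimit over all $D\in\N^r$ (which provides infinite room in each coordinate), and working locally on a suitable open cover of $\overline U$, one isotopes every point of every $\xi_{i,j}$ either to the boundary $\partial \overline U$ (where it is absorbed) or into a small neighbourhood adapted to a fixed maximal face $\tau\in \mathcal K_\Sigma(n)$ carrying that point's label set $J(x)$. The resulting deformation preserves $J(x)\in\mathcal K_\Sigma(n)$ at every time, and at $t=1$ lands in the polyhedral product. Finally, functoriality of polyhedral products applied to the weak equivalence of based pairs $(\mathrm{SP}^\infty(\overline U,\partial \overline U),*)\simeq(\CP^\infty,*)$ yields
\begin{equation*}
\mathcal Z_{\mathcal K_\Sigma(n)}\bigl(\mathrm{SP}^\infty(\overline U,\partial \overline U),*\bigr)\simeq \mathcal Z_{\mathcal K_\Sigma(n)}(\CP^\infty,*)=DJ(\mathcal K_\Sigma(n)),
\end{equation*}
and composing with (the inverse of) $\iota$ defines $r_\Sigma$.

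The main obstacle is the homotopy equivalence $\iota\simeq *$ in the third step. A generic tuple in $E^\Sigma_n(\overline U,\partial \overline U)$ may have all coordinates $\xi_{i,j}$ non-empty (the condition only forbids a \emph{common} intersection over each primitive $\sigma\times[n]$), so one must genuinely contract the complement of $\mathcal Z_{\mathcal K_\Sigma(n)}(\mathrm{SP}^\infty,*)$ into itself by pushing parts of configurations to $\partial \overline U$, while maintaining the pointwise face condition $J(x)\in\mathcal K_\Sigma(n)$ at every instant. This is the polyhedral generalization of the McDuff--Segal-style argument that underlies \cite{KY8} and \cite{KY6}, and the only real work in the proof is verifying that the required deformation can be carried out continuously and compatibly with the filtration by $D$.
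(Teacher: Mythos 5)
Your overall skeleton coincides with the paper's: both arguments aim to deformation retract $E^{\Sigma}_n(\overline{U},\partial \overline{U})$ onto the subspace $\vee^{I(\Sigma,n)}\SP^{\infty}=\mathcal{Z}_{\KS(n)}(\SP^{\infty},*)$ and then invoke the Dold--Thom identification $\SP^{\infty}(S^2,\infty)\cong\CP^{\infty}$ together with Lemma \ref{lmm: veeK}. The gap is exactly where you say it is, and the mechanism you sketch for closing it would not work as described. Sorting the points of the $\xi_{i,j}$ into ``neighbourhoods adapted to a fixed maximal face carrying $J(x)$'' does not produce an element of the polyhedral product: membership in $\mathcal{Z}_{\KS(n)}(\SP^{\infty},*)$ is a condition on the \emph{global support} (for every primitive $\sigma$ some coordinate $\xi_{i,j}$, $(i,j)\in\sigma\times[n]$, must be entirely empty), not a pointwise condition on label sets, so relocating points while preserving $J(x)\in\KS(n)$ accomplishes nothing. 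Moreover a ``fixed maximal face'' cannot be chosen continuously in the tuple, and an open cover ``adapted'' to the configuration varies discontinuously as points collide or separate; these are precisely the places where such an ad hoc isotopy breaks.

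The paper's retraction is simpler and avoids all of this. One identifies $E^{\Sigma}_n(\overline{U},\partial\overline{U})$ with $E^{\Sigma}_n(S^2,\infty)$ and observes that, since each $\eta_{i,j}$ has finite support and $\bigcap_{(i,j)\in\sigma\times[n]}\eta_{i,j}=\emptyset$ for every primitive $\sigma$, every tuple lies in some open subset $E^{\Sigma}_{\epsilon}$ on which, for each such $\sigma$, at least one coordinate misses the closed disk $\overline{U(\epsilon)}$ about the centre. A single radial expansion (pushing everything outside $U(\epsilon)$ to $\infty$, where it is forgotten) then empties out exactly those coordinates, so the image lands in $\vee^{I(\Sigma,n)}\SP^{\infty}(S^2,\infty)$; this homotopy depends only on $\epsilon$, is manifestly continuous, and the exhaustion $E^{\Sigma}_n(S^2,\infty)=\bigcup_{\epsilon>0}E^{\Sigma}_{\epsilon}$ is handled by the standard Segal colimit argument. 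If you replace your local sorting construction by this radial expansion on the $\epsilon$-filtration, your outline becomes the paper's proof.
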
 
\begin{proof}
For each $\epsilon >0$, let $U(\epsilon)=\{w\in \C: \vert w\vert <\epsilon\}\subset \C\cup\infty =S^2.$
The proof is analogous to that of
\cite[Prop. 3.1]{Se},  \cite[Lemma 7.10]{Ka1}
and \cite[Lemma 4.3]{KY9}.
Note that the space
$E^{\Sigma}_n(\overline{U},\partial \overline{U})$ is 
homeomorphic to the space
\begin{equation}
E^{\Sigma}_{n}(S^2,\infty)
=\{(\eta_1,\cdots ,\eta_r)\in (\SP^{\infty}(S^2,\infty)^n)^r:
(*)_n\},
\quad
\mbox{ where}
\end{equation}
\begin{enumerate}
\item[$(*)_n$]
$\bigcap_{(i,j)\in \sigma\times [n]}\eta_{i,j}=\emptyset$
for any $\sigma\in I(\KS)$, where
$\eta_i=(\eta_{i,1},\cdots ,\eta_{i,n})\in \SP^{\infty}(S^2,\infty)^n$
with $\eta_{i,j}\in \SP^{\infty}(S^2,\infty)$ for each $1\leq i\leq r$.
\end{enumerate}
For each $\epsilon >0$, 
let $E_{\epsilon}^{\Sigma}$ denote the open subset
of $E^{\Sigma}_{n}(S^2,\infty)$
consisting of all $r$-tuples
$(\eta_1,\cdots ,\eta_r)\in
E^{\Sigma}_{n}(S^2,\infty)$
such that, 
for any $\sigma \in I(\mathcal{K}_{\Sigma})$ there
exists some $(i,j)\in\sigma\times [n]$  satisfying the condition 
 $\eta_{i,j}\cap \overline{U(\epsilon)}=\emptyset.$
\par
Then the radial expansion defines a deformation retraction
\begin{equation}
r_{\epsilon}:E^{\Sigma}_{\epsilon}\stackrel{\simeq}{\rightarrow}
\vee^{I(\Sigma ,n)}\SP^{\infty}(S^2,\infty)
\end{equation}
(in this case, if $\eta_{i,j}\cap \overline{U(\epsilon)}=\emptyset$
and 
$(i,j)\in \sigma\times [n]$ (for any $\sigma\in I(K)$),
then the configuration $\eta_{i,j}$ gets retracted to $\infty$).
Since $E^{\Sigma}_{n}(S^2,\infty)=
\bigcup_{\epsilon>0}E^{\Sigma}_{\epsilon}$
and
there is a homeomorphism
$\SP^{\infty}(S^2,\infty)\cong
\CP^{\infty}$, 
there is a deformation retraction
\begin{equation}
E^{\Sigma}_n(S^2,\infty)
\stackrel{\simeq}{\rightarrow}
\vee^{I(\Sigma,n)}\SP^{\infty}(S^2,\infty)
\cong \vee^{I(\Sigma,n)}\CP^{\infty}.
\end{equation}
Since 
$I(\Sigma,n)=\{\tau \subset [r]\times [n]:\tau\notin \KS (n)\}$,
by Lemma \ref{lmm: veeK},
we can identify
$\vee^{I(\Sigma,n)}\CP^{\infty}=
\mathcal{Z}_{\KS (n)}(\CP^{\infty},*)=
DJ(\KS (n))$.
Thus we obtain the desired homotopy equivalence.
\end{proof}

\begin{rmk}\label{rmk: toric-variety-remark}
{\rm
For each $(i,j)\in [r]\times [n]$,
let $\n_{i,j}\in \Z^{mn}$
denote the lattice vector
defined by
\begin{equation}
\n_{i,j}=(\textbf{\textit{a}}_1,\cdots ,\textbf{\textit{a}}_n),
\mbox{ where we set }
\textbf{\textit{a}}_k=
\begin{cases}
\n_i & (k=j)
\\
{\bf 0}_m & (k\not= j)
\end{cases}
\end{equation}
and define the fan $F(\Sigma,n)$ in $\R^{mn}$ by
\begin{equation}
F(\Sigma ,n)=\{c_{\tau}:\tau\in \KS (n)\},
\end{equation}
where $c_{\tau}$ denotes the cone in $\R^{mn}$
generated by $\{ \n_{i,j}:(i,j)\in \tau\}$.
\par
Then
one can show that
there is a homeomorphism
\begin{equation}\label{eq: moment-angle}
\mathcal{Z}_{\KS}(\C^n,(\C^n)^*)\cong \mathcal{Z}_{\KS (n)}(\C,\C^*),
\end{equation}
and that 
 $\XS (n)$ is a toric variety associated to the fan $F(\Sigma,n)$.
Since the proof is tedious and we do not need this fact,
we omit the details.
 \qed
}
\end{rmk}


\section{The stable result}\label{section: stability}

In this section we give the proof of the following stability result
(Theorem \ref{thm: IV}) by using the stabilized scanning map
and Theorem \ref{thm: scanning map}.
\begin{dfn}
{\rm
Let $D=(d_1,\cdots ,d_r)\in \N^r$ and
$\textit{\textbf{a}}=(a_1,\cdots ,a_r)\in \N^r$ be two $r$-tuples of 
positive  integers 
such that
\begin{equation}
\sum_{k=1}^rd_k\textbf{\textit{n}}_k=\sum_{k=1}^r a_k\textbf{\textit{n}}_k={\bf 0}_m,
\end{equation}
and consider the following homotopy commutative diagram
\begin{equation}
\begin{CD}
\po^{D,\Sigma}_n@>i_D>> \Omega^2_D\XS (n)@>>\simeq> \Omega_0^2\XS (n) 
\\
@V{s_D}VV @V{\simeq}VV \Vert @.
\\
\po^{D+\textit{\textbf{a}},\Sigma}_n 
@>i_{D+\textit{\textbf{a}}}>> 
\Omega^2_{D+\textit{\textbf{a}}}\XS (n) @>>\simeq> \Omega^2_0\XS (n)
\end{CD}
\end{equation}
Then
by identifying
$\dis \po^{D+\infty,\Sigma}_n=\lim_{k\to\infty}\po^{D+k\textbf{\textit{a}},\Sigma}_n$,
we obtain the map
\begin{equation}\label{eq: inclusion stab}
i_{D+\infty}=\lim_{k\to\infty}i_{D+k\textit{\textbf{a}}}:
\po^{D+\infty,\Sigma}_n=\lim_{k\to\infty}\po^{D+k\textbf{\textit{a}},\Sigma}_n
 \to \Omega^2_0\XS (n).
\end{equation}
}
\end{dfn}

The purpose of this section is to prove the following result.
\begin{thm}\label{thm: IV}
The map
$i_{D+\infty}:
\po^{D+\infty,\Sigma}_n
 \stackrel{\simeq}{\longrightarrow}
 \Omega^2_0\XS (n)$
is a homotopy equivalence.
\end{thm}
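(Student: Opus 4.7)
The plan is to factor $i_{D+\infty}$ through the stabilized scanning map and then replace the target $\Omega^2_0 E^{\Sigma}_n(\overline{U},\partial\overline{U})$ by $\Omega^2_0 \XS(n)$ via a short chain of natural homotopy equivalences, each already implicit in the foundational material of \S\ref{section: polyhedral products} and \S\ref{section: scanning maps}. First, Theorem~\ref{thm: scanning map} provides the homotopy equivalence
$$
S:\po^{D+\infty,\Sigma}_n \xrightarrow{\simeq} \Omega^2_0 E^{\Sigma}_n(\overline{U},\partial\overline{U}),
$$
and applying $\Omega^2_0$ to the retraction of Lemma~\ref{lmm: E-infty} yields an equivalence
$\Omega^2_0 E^{\Sigma}_n(\overline{U},\partial\overline{U}) \simeq \Omega^2_0 DJ(\KS(n))$.

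Second, I would build a chain of equivalences $\Omega^2_0 DJ(\KS(n)) \simeq \Omega^2_0 \XS(n)$ from the standard facts of Section~\ref{section: polyhedral products}. Applying Lemma~\ref{Lemma: BP}.I to the simplicial complex $\KS(n)$ gives a fibration
$\mathcal{Z}_{\KS(n)} \to DJ(\KS(n)) \to (\CP^\infty)^{rn}$; since $\Omega^2(\CP^\infty)^{rn}\simeq \Z^{rn}$ is discrete and $\mathcal{Z}_{\KS(n)}$ is $2$-connected (so $\Omega^2\mathcal{Z}_{\KS(n)}$ is connected), this gives $\Omega^2_0 DJ(\KS(n)) \simeq \Omega^2 \mathcal{Z}_{\KS(n)}$. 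Combining the $T^{rn}$-equivariant retraction of Lemma~\ref{Lemma: BP}.II with the homeomorphism $\mathcal{Z}_{\KS(n)}(\C,\C^*)\cong \mathcal{Z}_{\KS}(\C^n,(\C^n)^*)$ of Remark~\ref{rmk: toric-variety-remark} identifies $\Omega^2\mathcal{Z}_{\KS(n)} \simeq \Omega^2 \mathcal{Z}_{\KS}(\C^n,(\C^n)^*)$. Finally Corollary~\ref{crl: principal} provides a principal $\GS$-bundle $\GS \to \mathcal{Z}_{\KS}(\C^n,(\C^n)^*) \to \XS(n)$ whose fiber $\GS\cong(\C^*)^{r-m}$ has the homotopy type of a torus, and since $\Omega^2 T^{r-m} \simeq *$, looping the bundle sequence twice gives $\Omega^2\mathcal{Z}_{\KS}(\C^n,(\C^n)^*) \simeq \Omega^2_0\XS(n)$.

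The main obstacle will be the third step: verifying that the composite of all these equivalences agrees up to homotopy with the map $i_{D+\infty}$ from (\ref{eq: inclusion stab}). Tracing the definitions, the scanning map sends a tuple $f=(f_1(z),\dots,f_r(z)) \in \po^{D,\Sigma}_n$ to the loop $w\mapsto (f_i\cap\overline{U}_w)_{i=1}^r$, and one must check that under the deformation retraction $r_{\Sigma}$ of Lemma~\ref{lmm: E-infty} and the subsequent identifications through the principal bundle of Corollary~\ref{crl: principal}, this loop passes to the class $[F_n(f_1)(w),\dots,F_n(f_r)(w)]\in \XS(n)$ prescribed by (\ref{eq; def iD}). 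The verification is a diagram chase: one must track the base-point conventions and the $T^r$-equivariant structures through each equivalence, and match the Borel-construction model of $DJ(\KS(n))$ with the orbit-space presentation $\XS(n)=\mathcal{Z}_{\KS}(\C^n,(\C^n)^*)/\GS$ in such a way that evaluation of polynomials corresponds on both sides. Once this compatibility is established, the chain of equivalences identifies $i_{D+\infty}$ with $S$ up to homotopy, and Theorem~\ref{thm: IV} follows at once.
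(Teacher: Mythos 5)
Your overall strategy --- factor $i_{D+\infty}$ through the stabilized scanning map $S$ of Theorem \ref{thm: scanning map} and then identify the target $\Omega^2_0 E^{\Sigma}_n(\overline{U},\partial \overline{U})$ with $\Omega^2_0\XS(n)$ --- is the same as the paper's, and your chain of abstract equivalences
$\Omega^2_0 E^{\Sigma}_n(\overline{U},\partial \overline{U})\simeq \Omega^2_0 DJ(\KS(n))\simeq \Omega^2\mathcal{Z}_{\KS(n)}\simeq \Omega^2\mathcal{Z}_{\KS}(\C^n,(\C^n)^*)\simeq\Omega^2_0\XS(n)$
is correct as a statement about spaces. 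But the theorem is a statement about a specific map, and the step you defer as ``a diagram chase'' --- showing that $i_{D+\infty}$ agrees, up to homotopy, with $S$ followed by these equivalences --- is the entire mathematical content of the proof. As written, there is no reason the chase closes: the equivalence of Lemma \ref{lmm: E-infty} and the moment-angle fibrations live entirely in the \emph{root-configuration} model (divisors retracting to $\infty$, Borel-type constructions on $DJ(\KS(n))$), whereas $i_D$ is defined by \emph{evaluating polynomials} via $F_n$ and projecting to the orbit space $\XS(n)$. Nothing in your chain relates a configuration of roots to the values $F_n(f_i)(\alpha)$, so the claimed compatibility is not a formal consequence of tracking base points and equivariant structures.

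The paper closes exactly this gap by introducing a bridge space: $\tilde{F}^{\Sigma}_n(U)$, the space of $r$-tuples of \emph{not necessarily monic} polynomials with no common root of multiplicity $\geq n$ in $U$ and no identically zero entry. This space admits \emph{both} an evaluation map $ev:\tilde{F}^{\Sigma}_n(U)\to\mathcal{Z}_{\KS}(\C^n,(\C^n)^*)$, shown to be a homotopy equivalence by an explicit contraction (Lemma \ref{lmm: ev}), \emph{and} a roots map $u:\tilde{F}^{\Sigma}_n(U)/\T^r_{\C}\to E^{\Sigma}_n(\overline{U},\partial\overline{U})$, with the $\T^r_{\C}$-bundle projection $p$ connecting the two, which becomes an equivalence after $\Omega^2$. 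One then checks the literal identities $\Omega^2 q_{\Sigma}\circ\Omega^2 ev\circ j_D^{\p}=i_D$ and $\Omega^2 u\circ j_D^{\p\p}=sc_D$ on the nose from the definitions, which is what makes the comparison of $i_D$ with $sc_D$ (and hence of $i_{D+\infty}$ with $S$) an honest computation rather than an unproven compatibility. To repair your proposal you would need to supply this intermediate object (or an equivalent device) rather than routing the identification through $DJ(\KS(n))$, where no such direct comparison is available.
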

To prove the above result (Theorem \ref{thm: IV})
we recall the following definitions and results.

\begin{dfn}
{\rm
(i)
For an open set $X\subset \C$, let $F^{\Sigma}_n(X)$ denote the space
of $r$-tuples $(f_1(z),\cdots ,f_r(z))\in \C [z]^r$
of (not necessarily monic) polynomials
satisfying the following  condition:
\begin{enumerate}
\item[(\ref{eq: inclusion stab}.1)]
For any $\sigma =\{i_1,\cdots ,i_s\}\in I(\mathcal{K}_{\Sigma})$,
the polynomials $f_{i_1}(z),\cdots ,f_{i_s}(z)$ have no common root 
of multiplicity $\geq n$ in $X$.
\end{enumerate}
Define the map
$i_{n}^{\Sigma}:X\to 
\mathcal{Z}_{\KS}(\C^n,(\C^n)^*)$
by
\begin{equation}
i_{n}^{\Sigma}(f)(\alpha)=
(F_n(f_1)(\alpha),F_n(f_2)(\alpha ),\cdots ,
F_n(f_r)(\alpha))
\end{equation}
for
$(f,\alpha)
=((f_1(z),\cdots ,f_r(z)),\alpha)\in F^{\Sigma}_n(X)\times X.$
\par
(ii) 
Let $U=\{w\in \C:\vert w\vert <1\}$ and let
\begin{equation}
ev_0:F^{\Sigma}_n(U)\to \mathcal{Z}_{\KS}(\C^n,(\C^n)^*)
\end{equation}
denote the map given by evaluation at $0$, i.e.
\begin{equation}
ev_0(f)=(F_n(f_1)(0),F_n(f_2)(0),\cdots ,F_n(f_r)(0))
\end{equation}
for
$f=(f_1(z),\cdots ,f_r(z))\in F^{\Sigma}_n(X)$.
\par
(iii)
Let $\tilde{F}^{\Sigma}_n(U)\subset F^{\Sigma}_n(U)$ denote the subspace
of all
$(f_1(z),\cdots ,f_r(z))\in F^{\Sigma}_n(X)$ such that no $f_i(z)$ is identically
zero, and let
\begin{equation}
ev:\tilde{F}^{\Sigma}_n(U)\to 
\mathcal{Z}_{\KS}(\C^n,(\C^n)^*)
\end{equation}
be
the map given by the restriction
$ev=ev_0\vert \tilde{F}^{\Sigma}_n(U)$.
\par
(iv)
We denote by
\begin{equation} 
u:\tilde{F}^{\Sigma}_n(U)/\T^r_{\C}\to E^{\Sigma}_n(\overline{U}, \partial \overline{U})
\end{equation}
the natural map which
assigns to an $r$-tuple  
$[f_1(z),\cdots ,f_r(z)]\in \tilde{F}^{\Sigma}_n(U)/\T^r_{\C}$ of polynomials
the $r$-tuple of configurations in $\C^n$
represented by their roots of
$F_n(f_1)(z),\cdots ,F_n(f_r)(z)$
which lie in $U$.
\qed
}
\end{dfn}
\begin{lmm}\label{lmm: ev}
The map
$ev:\tilde{F}^{\Sigma}_n(U)\stackrel{\simeq}{\longrightarrow}
\mathcal{Z}_{\KS}(\C^n,(\C^n)^*)$
is a homotopy equivalence.
\end{lmm}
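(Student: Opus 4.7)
The plan is to prove that $ev$ is a homotopy equivalence by constructing an explicit homotopy inverse $s:\mathcal{Z}_{\KS}(\C^n,(\C^n)^*)\to\tilde{F}^{\Sigma}_n(U)$ and verifying both compositions, following the template of the analogous arguments of \cite{Se}, \cite{Ka1} and \cite{KY9} which the authors cite in the proof of Lemma \ref{lmm: E-infty}. The key observation that makes $ev$ reasonable is that $F_n(f)(0)$ depends only on the first $n$ Taylor coefficients of $f$ at $0$, so there is a natural \emph{linear} bijection between $\C^n$ and the space of polynomials of degree $\leq n-1$ via $p\mapsto F_n(p)(0)$.

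First I would build the section $s$. Given $v=(v_1,\dots,v_r)\in\mathcal{Z}_{\KS}(\C^n,(\C^n)^*)$, let $p_i(v)(z)$ be the unique polynomial of degree $\leq n-1$ with $F_n(p_i(v))(0)=v_i$; note that $p_i(v)\equiv 0$ exactly when $v_i={\bf 0}_n$. Set
$$s(v)=\bigl(p_1(v)(z)+h_1(v)(z),\dots,p_r(v)(z)+h_r(v)(z)\bigr),$$
where each correction term $h_i(v)(z)\in z^n\cdot\C[z]$ is chosen to depend continuously on $v$, to make $s(v)_i$ never identically zero, and to keep $s(v)$ inside $\tilde{F}^{\Sigma}_n(U)$ (for instance $h_i(v)(z)=\phi(v_i)z^n$ for a suitable cutoff $\phi:\C^n\to\C$ with $\phi(0)\neq 0$ and $\phi$ vanishing for $|v_i|$ large). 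Since $F_n(z^nq)(0)={\bf 0}_n$ for every polynomial $q$, we obtain $ev\circ s=\mathrm{id}$. The verification that $s(v)\in\tilde{F}^{\Sigma}_n(U)$ rests on the combinatorics of $I(\KS)$: for any $\sigma\in I(\KS)$ the hypothesis $v\in\mathcal{Z}_{\KS}(\C^n,(\C^n)^*)$ forces some $v_i$ with $i\in\sigma$ to be nonzero, so the corresponding $p_i(v)$ is a nonzero polynomial of degree $\leq n-1$ and hence cannot have any root of multiplicity $\geq n$, which rules out common $\sigma$-roots of multiplicity $\geq n$ in $U$.

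For the other composition $s\circ ev\simeq\mathrm{id}$, I would use that every $f=(f_1,\dots,f_r)\in\tilde{F}^{\Sigma}_n(U)$ admits a unique decomposition $f_i(z)=p_i(ev(f))(z)+z^n g_i(z)$, and consider the straight-line homotopy
$$H_t(f)_i(z)=p_i(ev(f))(z)+(1-t)\,z^n g_i(z)+t\,h_i(ev(f))(z),\qquad t\in[0,1],$$
which starts at $f$, ends at $s(ev(f))$, and preserves the $n$-jet at $0$, so that $ev\circ H_t=ev(f)$ for all $t$. Both endpoints of the homotopy lie in $\tilde{F}^{\Sigma}_n(U)$, and the set of $r$-tuples of polynomials satisfying condition $(\ref{eq: inclusion stab}.1)$ is open in the ambient polynomial space, so the task reduces to verifying that the whole family $H_t(f)$ lies inside this open set.

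\textbf{Main obstacle.} The principal technical difficulty is precisely this last verification, namely that $H_t(f)\in\tilde{F}^{\Sigma}_n(U)$ for every $t$, i.e.\ that the non-common-root-of-multiplicity-$\geq n$ condition on $U$ is preserved throughout the deformation. Concretely, one has to avoid the possibility that at some intermediate $t$ the tuple $H_t(f)$ acquires a common root of multiplicity $\geq n$ at some $\alpha\in U\setminus\{0\}$ along a primitive collection. This is handled exactly as in the openness arguments of \cite{Se}, \cite{Ka1} and \cite{KY9}, by refining the choice of the correction $h_i(v)$ so that the linear path $(1-t)g_i+t(h_i(ev(f))/z^n)$ never crosses the discriminant strata cut out by $I(\KS)$; equivalently, one may argue that $ev$ is a quasifibration with contractible fibres and deduce the conclusion from a Dold--Thom type criterion. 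The combinatorial hypothesis that $v\in\mathcal{Z}_{\KS}(\C^n,(\C^n)^*)$ is what makes this obstruction vanish uniformly.
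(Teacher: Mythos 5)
Your approach differs from the paper's and, as written, it has genuine gaps.

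\textbf{Comparison with the paper's route.}
The paper does not attempt to build a section into $\tilde{F}^{\Sigma}_n(U)$ at all. It first proves that the unrestricted evaluation $ev_0:F^{\Sigma}_n(U)\to\mathcal{Z}_{\KS}(\C^n,(\C^n)^*)$ is a homotopy equivalence, where $F^{\Sigma}_n(U)$ is the larger space that \emph{does} allow identically zero $f_i$. There the section is simply
$i_0(\textbf{\textit{b}}_1,\cdots,\textbf{\textit{b}}_r)=(f_{\textbf{\textit{b}}_1},\cdots,f_{\textbf{\textit{b}}_r})$
with $f_{\textbf{\textit{b}}}(z)=b_0+\sum_{k=1}^{n-1}\frac{(b_k-b_0)}{k!}z^k$; these have degree $\leq n-1$, hence automatically no root of multiplicity $\geq n$, and the fact that $f_{\textbf{\textit{b}}}$ may be identically zero is harmless in $F^{\Sigma}_n(U)$. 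Finally the paper reduces from $F^{\Sigma}_n(U)$ to $\tilde{F}^{\Sigma}_n(U)$ by citing Eells--Kuiper \cite[Theorem 2]{EK}: $\tilde{F}^{\Sigma}_n(U)$ is the complement of a closed subset of infinite codimension in the infinite-dimensional manifold $F^{\Sigma}_n(U)$, so the inclusion is a homotopy equivalence. The Eells--Kuiper step is the load-bearing ingredient that lets the paper avoid exactly the corrections $h_i$ you are forced to introduce; your proposal omits it entirely.

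\textbf{Gap 1: the section $s$ need not land in $\tilde{F}^{\Sigma}_n(U)$.}
Your argument that $s(v)\in\tilde{F}^{\Sigma}_n(U)$ relies on the fact that a nonzero polynomial of degree $\leq n-1$ has no root of multiplicity $\geq n$, applied to $p_i(v)$. But $s(v)_i=p_i(v)(z)+\phi(v_i)z^n$ is not $p_i(v)$: whenever $\phi(v_i)\neq 0$ it has degree exactly $n$, and a degree-$n$ polynomial of this shape \emph{can} have a root of multiplicity $n$ at a point $\alpha\neq 0$ (take $p_i(v)(z)=\phi(v_i)\bigl((z-\alpha)^n-z^n\bigr)$, which has degree $\leq n-1$). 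You cannot force $\phi(v_i)=0$ for all $v_i\neq{\bf 0}_n$, since $\phi({\bf 0}_n)\neq 0$ and $\phi$ is continuous. So the combinatorial argument with $I(\KS)$ does not by itself rule out a common $\sigma$-root of multiplicity $\geq n$ at some $\alpha\in U\setminus\{0\}$ for the tuple $s(v)$.

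\textbf{Gap 2: the homotopy is not shown to stay in $\tilde{F}^{\Sigma}_n(U)$.}
You flag this yourself as the main obstacle, but you do not resolve it. First, the intermediate polynomials can be identically zero: when $p_i(ev(f))\equiv 0$ and $g_i$ is a nonzero constant collinear with $\phi(v_i)$ in the wrong direction, $(1-t)g_i+t\,\phi(v_i)$ vanishes for some $t\in(0,1)$, so $H_t(f)_i\equiv 0$. Second, and more seriously, a common root of multiplicity $\geq n$ can be created at some $\alpha\in U\setminus\{0\}$ during the deformation; preserving the $n$-jet at $0$ gives no control away from $0$. The openness remark does not help, since a path with endpoints in an open set need not stay in it. The citations to \cite{Se}, \cite{Ka1}, \cite{KY9} are misplaced here: those arguments concern radial expansion retractions of configuration spaces as in Lemma \ref{lmm: E-infty}, not paths of polynomials avoiding a discriminant, and the quasifibration suggestion is a different argument that would itself need to be established. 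In short, you have correctly located the hard point of the problem but have not supplied the idea that closes it, which in the paper is the Eells--Kuiper reduction.
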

\begin{proof}
For each 
$\textbf{\textit{b}}=(b_0,b_1,\cdots ,b_{n-1})\in \C^n$, let
$f_{\textbf{\textit{b}}}(z)\in \C [z]$ denote the polynomial
defined by
$
f_{\textbf{\textit{b}}}(z)=b_0+
\sum_{k=1}^{n-1}\frac{(b_k-b_0)z^k}{k!},
$
and define the map $i_0:
\mathcal{Z}_{\KS}(\C^n,(\C^n)^*)
\to F^{\Sigma}_n(U)$ by 
$
i_0(\textbf{\textit{b}}_1,\cdots ,\textbf{\textit{b}}_r)
=(f_{\textbf{\textit{b}}_1}(z),\cdots ,
f_{\textbf{\textit{b}}_r}(z))
$
for $(\textbf{\textit{b}}_1,\cdots ,\textbf{\textit{b}}_r)
\in \mathcal{Z}_{\KS}(\C^n,(\C^n)^*).$
Since the degree of the each polynomial
$f_{\textbf{\textit{b}}_k}(z)$ is at most $n-1$,
it has no root of multiplicity $\geq n$ and the map 
$i_0$ is well-defined.
Clearly $ev_0\circ i_0=\mbox{id}$.
\par
On the other hand,
let $\Phi: F^{\Sigma}_n(U)\times [0,1]\to
F^{\Sigma}_n(U)$ be
the homotopy given by
$
\Phi ((f_1(z),\cdots ,f_r(z)),t)=(f_{1}(tz),\cdots ,f_{r}(tz)).
$
This gives a homotopy between
$i_0\circ ev_0$ and the identity map, and this proves that $ev_0$ is a homotopy equivalence.
Since $F^{\Sigma}_n(U)$ is an infinite dimensional manifold and
$\tilde{F}^{\Sigma}_n(U)$ is a closed subspace of $F^{\Sigma}_n(U)$ of infinite codimension, by
using \cite[Theorem 2]{EK}, one can show that
the inclusion
$\tilde{F}^{\Sigma}_n(U)\to F^{\Sigma}_n(U)$ is a homotopy equivalence.
Hence $ev$ is also a homotopy equivalence.
\end{proof}
Now it is ready to prove Theorem \ref{thm: IV}.

\begin{proof}[Proof of Theorem \ref{thm: IV}]
Let $U=\{w\in\C: \vert w\vert <1\}$ as before and
note that the group $\T^r_{\C}$ acts freely on the space
$\tilde{F}^{\Sigma}_n(X)$
by coordinate multiplication for $X=U$ or $\C$.
Let $\tilde{F}^{\Sigma}_n(X)/\T^r_{\C}$ denote the corresponding orbit space.
Note that $u:\tilde{F}^{\Sigma}_n(U)/\T^r_{\C}
\stackrel{\simeq}{\longrightarrow} 
E^{\Sigma}_n(\overline{U}, \partial \overline{U})
$
is a homotopy equivalence.
Indeed, this  follows from \cite[(iii) of Lemma 8.4]{KY11}.
Now
let  
$scan: \tilde{F}^{\Sigma}_n(\C)\to 
\Map (\C,\tilde{F}^{\Sigma}_n(U))$ denote the map
given by 
$$
scan (f_1(z),\cdots ,f_r(z))(w)=(f_1(z+w),\cdots ,f_r(z+w))
$$
for $w\in\C$, and
 consider the diagram
$$
\begin{CD}
\tilde{F}^{\Sigma}_n(U) @>ev>\simeq>
\mathcal{Z}_{\KS}(\C^n,(\C^n)^*)
\\
@V{p}VV @.
\\
\tilde{F}(U)^{\Sigma}_n/\T^r_{\C} @>u>\simeq> 
E^{\Sigma}_n(\overline{U},\partial \overline{U})
\end{CD}
$$
where
$p:\tilde{F}^{\Sigma}_n(U)\to \tilde{F}^{\Sigma}_n(U)/\T^r_{\C}$
denotes the natural projection map.
Note that $p$ is a $\T^r_{\C}$-principal bundle projection. 
Consider the diagram below
$$
\begin{CD}
\tilde{F}^{\Sigma}_n(\C) @>scan>> 
\Map (\C, \tilde{F}^{\Sigma}_n(U)) 
@>ev_{\#}>\simeq> \Map (\C,\mathcal{Z}_{\KS}(\C^n,(\C^n)^*))
\\
@V{p}VV @V{p_{\#}}VV @.
\\
\tilde{F}^{\Sigma}_n(\C)/\T^r_{\C} @>scan>>
\Map (\C, \tilde{F}^{\Sigma}_n(U)/\T^r_{\C}) 
@>u_{\#}>{\simeq}>
\Map (\C, E^{\Sigma}_n(\overline{U},\partial \overline{U}))
\end{CD}
$$
induced from the above diagram.
Observe that $\Map (\C,\cdot )$ can be replaced by $\Map^* (S^2,\cdot)$
by extending  from $\C$ to $S^2=\C\cup \infty$
(as base point preserving maps).
Thus by setting
$$
\begin{cases}
j_D^{\p}:\po^{D,\Sigma}_n \stackrel{\subset}{\longrightarrow}
 \tilde{F}^{\Sigma}_n(\C) 
\stackrel{scan}{\longrightarrow}
\Map_D^*(S^2,\tilde{F}^{\Sigma}_n(U))
=\Omega^2_D\tilde{F}^{\Sigma}_n(U)
\\
j_D^{\p\p}:E^{D,\Sigma}_n(\C)
\stackrel{\subset}{\longrightarrow}
\tilde{F}^{\Sigma}_n(\C)/\T^r_{\C} \stackrel{scan}{\longrightarrow}
\Map_D^*(S^2,\tilde{F}^{\Sigma}_n(U)/\T^r_{\C})
=\Omega^2_D(\tilde{F}^{\Sigma}_n(U)/\T^r_{\C})
\end{cases}
$$
we obtain the following commutative diagram, where
the suffix $D$ denotes the appropriate path component:
$$
\begin{CD}
\po^{D,\Sigma}_n
@>j_D^{\p}>> \Omega^2_D\tilde{F}^{\Sigma}_n(U) 
@>\Omega^2 ev>\simeq> \Omega^2_D
\mathcal{Z}_{\KS}(\C^n,(\C^n)^*)
@>\Omega^2q_{\Sigma}>\simeq> \Omega^2_D \XS (n)
\\
@V{\cong}VV 
 @V{\Omega^2p}V{\simeq}V @. @.
\\
E^{\Sigma}_D(\C)
@>j_D^{\p\p}>>
\Omega^2_D (\tilde{F}^{\Sigma}_n(U)/\T^r_{\C}) 
@>\Omega^2 u>{\simeq}>
\Omega^2_DE^{\Sigma}_n(\overline{U},\partial \overline{U})
@.
\end{CD}
$$
Note that the maps
$\Omega^2q_{\Sigma}$, $ev$, $\Omega^2p$ and $u$ are homotopy equivalences.
Moreover, from the definitions of the maps, one can see that the following two equalities hold (up to homotopy equivalence):
\begin{equation}
\Omega^2q_{\Sigma}\circ \Omega^2ev\circ  j_D^{\p}=i_D,
\quad 
\Omega^2u\circ j_D^{\p\p}=sc_D.
\end{equation}
Hence, the maps $i_D$ and $sc_D$ are homotopic up to homotopy equivalences.
Thus, if we replace $D$ by $D+k\textit{\textbf{a}}$
and let $k\to\infty$
then,
by using Theorem \ref{thm: scanning map},
we see that the map $i_{D+\infty}$ is a homotopy equivalence.
\end{proof}
\section{Proofs of the main results}\label{section: proofs}

In this section we prove Theorem \ref{thm: I} and Corollary \ref{crl: II}.
For this purpose,  from now on we always assume that
 $\XS$ is a simply connected smooth toric variety such that 
the condition $($\ref{equ: homogenous}.1$)$ 
is satisfied.
Now we can prove the main results.
\begin{proof}[Proof of Theorem \ref{thm: I}]
The assertion (i) follows from Corollary \ref{crl: III} and
Theorem
\ref{thm: IV}.
It remains to show (ii) and
suppose that $\sum_{k=1}^rd_k\textbf{\textit{n}}_k\not= {\bf 0}_m$.
\par
By the assumption
$($\ref{equ: homogenous}.1$)$,
there is an $r$-tuple $D_*=(d_1^*,\cdots ,d_r^*)\in \N^r$ 
such that
$\sum_{k=1}^rd_k^*\textit{\textbf{n}}_k={\bf 0}_m.$
Then if we choose a sufficiently large positive integer $n_0$,
the following equality holds:
\begin{equation}
\textbf{\textit{a}}=n_0D_*-D
=(n_0d_1^*-d_1,\cdots ,n_0d_r^*-d_r)\in \N^r.
\end{equation}
Since the $r$-tuple $n_0D_*=D+\textbf{\textit{a}}\in \N^r$ satisfies the condition
(\ref{equ: homogenous}.1), the map
$i_{D+\textbf{\textit{a}}}$ is well-defined.
Then one can
define the map
\begin{equation}\label{eq: mapD}
j_D:\po^{D,\Sigma}_n\to
\Omega^2\mathcal{Z}_{\KS}(D^{2n},S^{2n-1})
\end{equation}
by the composite $j_D=(i_{D+\textbf{\textit{a}}})\circ 
(s_{D,D+\textbf{\textit{a}}})$,
{\small
\begin{equation*}
\po^{D,\Sigma}_n
\stackrel{s_{D,D+\textbf{\textit{a}}}}{\longrightarrow}
\po^{D+\textbf{\textit{a}},\Sigma}_n
\stackrel{i_{D+\textbf{\textit{a}}}}{\longrightarrow}
\Omega^2_{D+\textbf{\textit{a}}}\XS (n)
\simeq
\Omega^2_0\XS (n)\simeq
\Omega^2\mathcal{Z}_{\KS}(D^{2n},S^{2n-1}).
\end{equation*}
}
\par\noindent{}Note that the two maps $s_{D,D+\textbf{\textit{a}}}$
and $i_{D+\textbf{\textit{a}}}$ are homotopy equivalences through
dimensions $d(D;\Sigma,n)$ and $d(D+\textbf{\textit{a}};\Sigma,n)$
(by Corollary \ref{crl: III} and Theorem \ref{thm: I}).
Since $d(D;\Sigma,n)\leq d(D+\textbf{\textit{a}};\Sigma,n)$, the map $j_D$ is a homotopy equivalence through dimension
$d(D;\Sigma,n)$.
\end{proof}
\begin{proof}[Proof of Corollary \ref{crl: II}.]
Let $\XS$ be a compact smooth toric variety such that
$\Sigma (1)=\{\mbox{Cone}(\textit{\textbf{n}}_k):1\leq k\leq r\}$, 
where $\{\textit{\textbf{n}}_k\}_{k=1}^r$ are  primitive generators
as in Definition \ref{dfn: fan}.
Since $\XS$ is a compact, by (ii) of Lemma \ref{lmm: toric} we easily see that
the condition (\ref{equ: homogenous}.1) 
is satisfied for $\XS$.
Since $\Sigma_1\subsetneqq \Sigma$, by using
Lemma \ref{lmm: toric}
we  see that $X_{\Sigma_1}$ is a non-compact smooth toric subvariety of $\XS$.
Moreover, since
 $\Sigma (1)\subset \Sigma_1\subsetneqq \Sigma$, we see that
$\Sigma_1(1)=\Sigma (1)$.
Hence,  
the condition (\ref{equ: homogenous}.1) holds for $X_{\Sigma_1}$, too. 
Thus,
the assertion follows from Theorem \ref{thm: I}.
\end{proof}
\par\vspace{1mm}\par
\noindent{\bf Acknowledgements. }
The authors should like to take this opportunity to thank
Professors Martin Guest and  
 Masahiro Ohno 
for his many valuable  insights and suggestions concerning toric varieties
and scanning maps.
The second author was supported by 
JSPS KAKENHI Grant Number 18K03295.
This work was also supported by the Research Institute of Mathematical
Sciences, a Joint Usage/Research Center located in Kyoto University.







\begin{thebibliography}{99}


\bibitem{AKY1}
M. Adamaszek, A. Kozlowski  and K. Yamaguchi,
Spaces of algebraic and continuous maps between real algebraic varieties,
 Quart. J. Math. {\bf 62} (2011), 771--790.
\bibitem{AJ}M. F. Atiyah and J. D. S. Jones,
Topological aspects of Yang-Mills theory, Commun. Math. Phys.
{\bf 59} (1978), 97--118.
\bibitem{Ar1} V.I. Arnold,
On some topological invariants of algebraic functions. Trans. Moscow Math. Soc., {\bf 21}, (1970), 30--52.
\bibitem{Ar2}V.I. Arnold, Spaces of functions with moderate singularities,
Funct. Anal. Appl. {\bf 23} (3),(1980) 1--10.

\bibitem{A}
M. F. Atiyah, Instantons in dimension two and four, Commn. Math. Phys.
{\bf 93} (1984), 437-451.
\bibitem{BP}V. M. Buchstaber and T. E. Panov,
Torus actions and their applications in topology and combinatorics,
Univ. Lecture Note Series {\bf 24}, Amer. Math. Soc. Providence, 2002.
\bibitem{Cox1}D. A. Cox, The homogenous coordinate ring of a toric variety, 
J. Algebraic Geometry {\bf 4} (1995), 17-50. 
\bibitem{Cox2}D. A. Cox, The functor of a smooth toric variety,
Tohoku Math. J. {\bf 47} (1995), 251-262.
\bibitem{CLS}
D. A. Cox, J. B. Little and H. K. Schenck,
Toric varieties, Graduate Studies in Math. {\bf 124}, Amer. Math. Soc., 2011.
\bibitem{EK}J. Eells, Jr. and N. H. Kuiper,
Homotopy negligible subsets, Compositio Math. {\bf 21} (1969), 155-161.
\bibitem{FW}B. Farb and J. Wolfson,
Topology and arithmetic of resultants, I,
New York J. Math. {\bf 22} (2016), 801-821.
\bibitem{FWW}B. Farb, J. Wolfson and M. M. Wood,
Coincidences between homological densities, predicted by arithmetic,
Advances in  Math. {\bf 352} (2019), 670-716.
\bibitem{Gu1}
M. A. Guest, Instantons, rational maps and harmonic maps,
Methamatica Contemporanea {\bf 2} (1992), 113-155.
\bibitem{Gu2}M. A. Guest,
The topology of the space of rational curves on a toric variety,
Acta Math. {\bf 174} (1995), 119--145.
\bibitem{GKY1}
M. A. Guest, A. Kozlowski and K. Yamaguchi,
The topology of spaces of coprime polynomials,
Math. Z. {\bf 217} (1994), 435--446.
\bibitem{GKY2}
M. A. Guest, A. Kozlowski and K. Yamaguchi,
Spaces of polynomials with roots of bounded
multiplicity,
Fund. Math. {\bf 116} (1999), 93--117.
\bibitem{Hansen}
V. L. Hansen, Braids and coverings, Selected topics,
London Math. Soc. Studies, {\bf 18}, 1989.
\bibitem{Ka1}
S. Kallel, Divisor spaces on punctured Riemann surfaces,
Trans. Amer. Math. Soc. {\bf 350} (1998), 135--164.
\bibitem{Ka2}
S. Kallel, Spaces of particles of manifolds and generalized
Poincar\'e dualities, Quart. J. Math. {\bf 52} (2001),45--70.
\bibitem{KOY1}A. Kozlowski, M. Ohno and K. Yamaguchi,
Spaces of algebraic maps from real projective spaces to toric varieties,
J. Math. Soc. Japan {\bf 68} (2016), 745-771
\bibitem{KY4}A. Kozlowski and K. Yamaguchi,
Simplicial resolutions and spaces of algebraic maps between real
projective spaces, Topology Appl. {\bf 160} (2013), 87--98.
\bibitem{KY6}A. Kozlowski and K. Yamaguchi, 
The homotopy type of spaces of coprime polynomials revisited, 
Topology Appl. {\bf 206} (2016), 284-304.
\bibitem{KY7}A. Kozlowski and K. Yamaguchi,
The homotopy type of spaces of polynomials with bounded multiplicity, Publ. RIMS. Kyoto Univ. {\bf 52} (2016), 297-308.
\bibitem{KY8}A. Kozlowski and K. Yamaguchi, 
The homotopy type of spaces of resultants of bounded multiplicity, Topology
Appl. {\bf 232} (2017), 112-139.
\bibitem{KY9}A. Kozlowski and K. Yamaguchi, 
The homotopy type of spaces of rational curves on a toric variety,
 Topology Appl. {\bf 249} (2018), 19-42.
\bibitem{KY10}A. Kozlowski and K. Yamaguchi,
The homotopy type of real resultants with bounded multiplicity,
preprint (arXiv:1803.02154v2).
\bibitem{KY11}A. Kozlowski and K. Yamaguchi,
The homotopy type of the space of algebraic loops on a toric variety,
to appear in Topology Appl. 
(arXiv:2009.04255).
\bibitem{Mo2}J. Mostovoy,
Spaces of rational maps and the Stone-Weierstrass Theorem, 
Topology {\bf 45} (2006), 281--293.
\bibitem{Mo3}J. Mostovoy,
Truncated simplicial resolutions and spaces of rational maps,
Quart. J. Math. {\bf 63} (2012), 181--187.
\bibitem{MV}J. Mostovoy and E. Munguia-Villanueva,
Spaces of morphisms from a projective space to a toric variety,
Rev. Colombiana Mat. {\bf 48} (2014), 41-53.
\bibitem{Pa1}T. E. Panov,
Geometric structures on moment-angle manifolds,
Russian Math. Surveys {\bf 68} (2013), 503--568.
\bibitem{Se}G. B. Segal,
The topology of spaces of rational functions,
Acta Math. {\bf 143} (1979), 39--72.
\bibitem{Va}V. A. Vassiliev,
Complements of discriminants of smooth maps, 
Topology and Applications, 
Amer. Math. Soc.,
Translations of Math. Monographs \textbf{98},
1992 (revised edition 1994).
\bibitem{Va2}  V. A. Vassiliev, Topologia dopolneniy k diskriminantam, Fazis,  
Moskva 1997.
\end{thebibliography}




\end{document}